\newtheorem{theorem}{Theorem}[section]
\newtheorem{lemma}[theorem]{Lemma}
\newtheorem{proposition}[theorem]{Proposition}
\newtheorem{corollary}[theorem]{Corollary}
\theoremstyle{definition}
\theoremstyle{remark}
\newtheorem{remark}[theorem]{Remark}
\numberwithin{equation}{section}
\newcommand{\R}{\mathbb{R}}
\newcommand{\N}{\mathbb{N}}
\newcommand{\K}{\mathfrak{K}}
\renewcommand{\L}{\mathcal{L}}
\renewcommand{\H}{\mathcal{H}}
\author{Mark Allen}
\address{Department of Mathematics, 
Brigham Young University, Provo,
  UT 84602}
\email{allen@mathematics.byu.edu}
\author{Mariana Smit Vega Garcia}
\address{BH 230 Western Washington University, Department of Mathematics, Bellingham, WA 98225}
\email{Mariana.SmitVegaGarcia@wwu.edu}
\title[Thin Free Boundary]{The Fractional Unstable Obstacle Problem}
\begin{document}

\begin{abstract}
 We study a model for combustion on a boundary. Specifically, we study certain generalized solutions of
 the equation 
  \[
   (-\Delta)^s u = \chi_{\{u>c\}} 
  \]
 for $0<s<1$ and an arbitrary constant $c$. Our main object of study is the free boundary $\partial \{u>c\}$. We study the  behavior of the free boundary and 
  prove an upper bound for the Hausdorff dimension of the singular set. We also show that when $s\leq 1/2$ certain symmetric solutions are stable; however, when $s>1/2$ 
  these solutions are not stable and therefore not minimizers of the corresponding
  functional. 
\end{abstract}

\maketitle
\section{Introduction}    \label{s:introduction}
 In this paper we study solutions to an equation that models a boundary reaction. This boundary
 reaction differs from an interior reaction for combustion which has the mathematical model
  \begin{equation} \label{e:tcombust}
   \partial_t u- \Delta u = \chi_{\{u>0\}}.
  \end{equation}
 The authors in \cite{MW07} study traveling waves of \eqref{e:tcombust} by studying the stationary equation
  \begin{equation}  \label{e:combust}
   -\Delta u = \chi_{\{u>0\}}.
  \end{equation}
 The equation \eqref{e:combust} bears a strong resemblence to the obstacle problem which differs from \eqref{e:combust}
 by having a positive sign on the left handside. As noted in \cite{MW07} the ne\-gative sign changes
 the equation to an unstable problem which results in significant differences. 
 
 To formulate the fractional/thin unstable obstacle problem we first fix a bounded domain $U$ in 
 $R^{N}$. The solution should be of the form
  \begin{equation}   \label{e:frac1}
   (-\Delta)^s u = \chi_{\{u>c\}},
  \end{equation}
 where the fractional Laplacian is defined through the spectral decomposition with zero dirichlet boundary data. 
 One of the main difficulties in studying the fractional Laplacian is the nonlocal nature of the operator. By using an extension, we localize the problem and  formulate an even more general problem 
 than \eqref{e:frac1}. 
 By adding an extra variable, the fractional Laplacian in $\R^N$ can be localized by 
 viewing $(-\Delta)^s$ as a Dirichlet to Neumann boundary map (see \cite{CS07}). For a bounded domain the 
 extension is given as follows \cite{st10}:
 
 Let $f$ be a function on $U$ with $f\equiv 0$ on $\partial U$ and $n=N+1$. We consider the domain $U \times \R^+$, and write $(x',x_n)\in \R^{n}$ with $x' \in \R^{n-1}$ and $x_n \in \R$. Let $F$ solve 
   \begin{gather*}
    \text{div}(x_n^a \nabla F(x',x_n)) = 0  \text{ in }  U \times \R \\
                  F(x',0) = f(x')  \\
                  \lim_{x_n \to \infty} F(x',x_n) =0.
   \end{gather*} 
  
 Then 
  \[
   (- \Delta)^s f(x) = c_{N,a} \lim_{x_n \to 0} x_n^{a} \partial_{x_n} F(x',x_n) 
  \]
 where $c_{N,a}$ is a negative constant depending on dimension $N=n-1$ and $a$, were $s$ and $a$ are related 
 by $2s=1-a$. 
 
 In \cite{ALP} the first author studied the two-phase fractional obstacle problem which studies minimizers
 of the functional
  \begin{equation}  \label{e:thinobs}
   \int_{\Omega^+}{|\nabla v|^2 |x_n|^a} + 2\int_{\Omega'}{\lambda_+ v^+ + \lambda_- v^-}
  \end{equation}
 where $\Omega \subset \R^{n}$, $(x',x_n) \in \R^{n-1} \times \R$ are the variables in $\R^{n}$ and 
 $\Omega':= \Omega \cap (\R^{n-1} \times \{0\})$. It is assumed that $\lambda_{\pm} \geq 0$, and $v^+,v^-$ represent the positive and 
 negative parts of $v$. 
 %It was shown in \cite{ALP} that when the intersection of the 
 %free boundaries of the minimizers
 %$\partial \{u(\cdot , 0)>0\} \cap \partial \{u(\cdot , 0)<0\}=\emptyset$, then the problem is equivalent to 
 %the fractional obstacle problem studied in \cite{S07} and \cite{CSS}. In \cite{ALP} it was shown that 
 %this separation of the free boundaries 
 %always occurs when $a \geq 0$. 
 To formulate the thin or fractional unstable obstacle problem we fix a
 bounded smooth domain in $\R^{n}$ 
 %that is symmetric with respect to the $x_n$ variable where 
 %$(x',x_n) \in \R^{n-1} \times \R$. 
 We study minimizers of
  \begin{equation} \label{e:twofractional}
   J_a(v,\lambda_+,\lambda_-)
      :=\int_{\Omega^+}{|\nabla v|^2 |x_n|^a} - 2\int_{\Omega'}{(\lambda_+ v^+ + \lambda_- v^-) \ d \H^{n-1}}
  \end{equation}
 with $\lambda_{\pm} \geq 0$. The minimization occurs over the class of functions $H^1(a, \Omega^+)$ (as defined in  
 Section \ref{s:not}) with fixed boundary values on $\partial \Omega \cap \{x_n>0\}$.

 This next propostion illustrates how
 minimizing \eqref{e:twofractional} is always a ``two-phase'' problem even if $\lambda_+ =0$ or $\lambda_- =0$. 
  \begin{proposition}  \label{p:onephase}
   $u$ is a minimizer of $J_a(v,\lambda_+,\lambda_-)$ 
   if and only if $u+cx_n^{1-a}$ is a minimizer of 
   $J_a(w,\lambda_+ -c(1-a), \lambda_- + c(1-a))$ for any constant $c$ such that 
   $-\lambda_- \leq c(1-a) \leq \lambda_+$ and where all test functions are such that 
   $v+cx_n^{1-a}=w$ on $(\partial \Omega)^+$.
  \end{proposition}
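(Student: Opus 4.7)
The plan is to exhibit a direct substitution: write $w = v + c\,x_n^{1-a}$, expand the functional $J_a(w, \lambda_+ - c(1-a), \lambda_- + c(1-a))$, and show it equals $J_a(v, \lambda_+, \lambda_-)$ plus a constant that depends only on the prescribed boundary values on $(\partial\Omega)^+$. Since both functionals are being minimized over admissible classes that differ by the fixed shift $c\,x_n^{1-a}$, the bijection $v \leftrightarrow v + c\,x_n^{1-a}$ preserves the minimization property.

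The key observation I would exploit is that $\varphi := x_n^{1-a}$ is a solution of the extension equation: direct computation gives $\nabla\varphi = (1-a)x_n^{-a} e_n$, so $x_n^a \nabla\varphi = (1-a)e_n$ is divergence-free, i.e.\ $\div(x_n^a \nabla\varphi)=0$ in $\Omega^+$. Also $\varphi \equiv 0$ on $\Omega'$ because $1-a>0$, so the positive/negative part terms of $w$ on the thin space agree with those of $v$: $w^\pm|_{\Omega'} = v^\pm|_{\Omega'}$.

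Next I would expand the Dirichlet energy:
\[
\int_{\Omega^+} |\nabla w|^2 x_n^a = \int_{\Omega^+} |\nabla v|^2 x_n^a + 2c \int_{\Omega^+} \nabla v \cdot \nabla\varphi\, x_n^a + c^2 \int_{\Omega^+} |\nabla\varphi|^2 x_n^a.
\]
The last term is a constant independent of $v$ (and finite since $|\nabla\varphi|^2 x_n^a = (1-a)^2 x_n^{-a}$ is integrable for $a<1$). For the cross term I would integrate by parts, using $\div(x_n^a \nabla\varphi)=0$, to obtain boundary contributions. On $\Omega'$ the outward normal is $-e_n$ and $x_n^a \nabla\varphi = (1-a)e_n$, producing
\[
2c \int_{\Omega^+} \nabla v \cdot \nabla\varphi\, x_n^a = -2c(1-a) \int_{\Omega'} v\, d\H^{n-1} + (\text{terms on }(\partial\Omega)^+),
\]
where the latter depend only on the prescribed boundary values and $\varphi$. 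Splitting $v = v^+ - v^-$ on $\Omega'$, this contributes $-2c(1-a)\int v^+ + 2c(1-a)\int v^-$, which precisely cancels the discrepancy between the penalty terms of $J_a(v,\lambda_+,\lambda_-)$ and $J_a(w,\lambda_+-c(1-a),\lambda_-+c(1-a))$. Collecting yields $J_a(w,\lambda_+-c(1-a),\lambda_-+c(1-a)) = J_a(v,\lambda_+,\lambda_-) + C$, with $C$ depending only on the boundary data. Finally, the hypothesis $-\lambda_- \leq c(1-a) \leq \lambda_+$ is exactly what guarantees that the shifted coefficients $\lambda_\pm \mp c(1-a)$ remain nonnegative, so the new functional is again of the admissible form.

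I do not expect any single step to be a serious obstacle; the main care point is checking integrability of $|\nabla\varphi|^2 x_n^a$ and justifying the integration by parts on the weighted space $H^1(a,\Omega^+)$, which is where the degenerate weight $x_n^a$ must be handled, but standard density/approximation in weighted Sobolev spaces suffices.
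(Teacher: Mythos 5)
Your proposal is correct and follows essentially the same route as the paper: substitute $w=v+cx_n^{1-a}$, expand the weighted Dirichlet energy, integrate the cross term by parts (using that $x_n^a\nabla x_n^{1-a}=(1-a)e_n$ is divergence free) to produce $-2c(1-a)\int_{\Omega'}v$ plus fixed boundary terms, and observe that since $x_n^{1-a}$ vanishes on $\Omega'$ the shifted penalty terms cancel this exactly via $v=v^+-v^-$, leaving $J_a(w,\lambda_+-c(1-a),\lambda_-+c(1-a))=J_a(v,\lambda_+,\lambda_-)+C$ with $C$ depending only on the prescribed boundary data. Your added remarks on integrability of $x_n^{-a}$ and on the role of the constraint $-\lambda_-\le c(1-a)\le\lambda_+$ are consistent with, and slightly more careful than, the paper's argument.
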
 
  
  \begin{proof}
   Let $v \in H^1(a,\Omega)$ with prescribed boundary values on $\partial \Omega$. Let $w=v+cx_n^{1-a}$. Then
    \[
     \begin{aligned}
      & J_a(w,\lambda_+ -c(1-a), \lambda_- + c(1-a)) \\
       &\quad = \int_{\Omega^+} \left[ |\nabla v|^2 + 2c(1-a)v_{x_n}x_n^{-a} + c^2(1-a)^2x_n^{-2a})\right]x_n^a \\
        &\qquad -2\int_{\Omega'}{(\lambda_+ -c(1-a))v^+ + (\lambda_- + c(1-a))v^-} \\
       &=\quad  2c\int_{\partial \Omega^+ \cap \{x_n>0\}}v \langle \nu, x_n \rangle \\
        &\qquad+ \int_{\Omega^+}{|\nabla v|^2 |x_n|^a} -2\int_{\Omega'}{\lambda_+ v^+ +  
                   \lambda_- v^- }\\
        &\qquad +2c(1-a)\int_{\Omega'}{v^+ -v^- -v}
     \end{aligned}
    \]
   The first term only depends on the values of $v$ on $\partial \Omega$, and the last term is zero. Therefore, it is 
   clear that  
    \[
     J_a(w,\lambda_+ -c(1-a), \lambda_- + c(1-a)) = J_a(v,\lambda_+,\lambda_-) + C
    \]
   where $C$ is a constant depending on the values of $v$ on $\partial \Omega$. 
  \end{proof}
 
 Because of Proposition \ref{p:onephase} for most of the paper (until Sections \ref{s:stable} and \ref{s:symmetry}) we study minimizers of
 the energy functional 
  \begin{equation}  \label{e:f}
   J_a(v):=\int_{\Omega^+}{ |\nabla u|^2 |x_n|^a} - 2\int_{\Omega'}{u^-}
  \end{equation}
 where $(x',x_n)\in \R^{n}$ with $x' \in \R^{n-1}$ and $x_n \in \R$. 
 It follows from first variation (see Proposition \ref{p:thinsol}) that minimizers are solutions to 
  \begin{equation} \label{e:sol2}
   \int_{\Omega^+}{x_n^a \langle \nabla u, \nabla \psi \rangle } = \int_{\Omega' \cap \{u<0\}}{-\psi}
  \end{equation}
 for every $\psi \in C_0^1(\Omega)$. 
 If $u$ is a minimizer to \eqref{e:f}, then 
 \begin{equation}   \label{e:fsol}
 \begin{aligned}
    \text{div}(x_n^a \nabla u(x',x_n)) &= 0  \text{ in } \Omega^+\\
              \lim_{x_n \to 0} x_n^a u(x',x_n) &= \chi_{\{u(x',0)<0\}}. 
   \end{aligned}
\end{equation}
 From Theorem \ref{t:optimalreg}, a minimizer $u$ will be H\"older continuous on the thin space $\mathbb{R}^{n-1} \times \{0\}$, and so it will be clear 
 that \eqref{e:fsol} will follow from \eqref{e:f} whenever $u(x',0)>0$ or $u(x',0)<0$. From Proposition \ref{p:nosep}, the free boundary $\{u(\cdot, 0)=0\}$ will 
 have $\mathcal{H}^{n-1}$ measure zero, so that \eqref{e:fsol} will hold almost everywhere. 
 
 If $u(x')$ is a solution to \eqref{e:frac1} in $U$, then we add an extra variable $x_n$ so that 
 \begin{equation}   \label{e:fsol1}
 \begin{aligned}
    \text{div}(x_n^a \nabla u(x',x_n)) &= 0  \text{ in } U \times \mathbb{R}^+\\
              \lim_{x_n \to 0} x_n^a u(x',x_n) &= \frac{1}{c_{N,a}}\chi_{\{u(x',0)>c\}}. 
   \end{aligned}
\end{equation}
Solutions to \eqref{e:fsol1} can be found by minimizing 
\[
 \int_{U \times \mathbb{R}^+} |\nabla v(x',x_n)|^2 x_n^a - \frac{2}{-c_{N,a}} \int_{U} (v-c)^+ d \mathcal{H}^{n-1}. 
\]
By adding a constant one may take $c=0$ in \eqref{e:fsol1}. As has already been explained, one may add $c_1 x_n^{1-a}$ to change the multiplicative constant in 
\eqref{e:fsol1}. 
Thus, a more general problem than considering solutions to \eqref{e:frac1} is to consider solutions to \eqref{e:f}. 

 A further result for minimizers is Proposition \ref{p:thinnon}, which gives a nondegeneracy growth condition away 
 from the free boundary. Most of the results in this paper only require a function satisfying \eqref{e:sol2} and 
 the nondegeneracy condition of Proposition \ref{p:thinnon}. 
 %The notable exception is in Section \ref{s:regular} 
 %where we require that 
 %our solutions are minimizers of \eqref{e:f}. 
 
 The weight $x_n^a$ was introduced in \cite{CS07} to study local
 properties of equations involving the fractional Laplacian. The variable $a=1-2s$ where $(-\Delta)^s$ is the
 fractional Laplacian of order $0<s<1$.  The resemblance
 between \eqref{e:frac1} and \eqref{e:combust} gives a mathematical justification for considering minimizers of
 \eqref{e:f} as solutions to the ``fractional unstable obstacle problem''. There is also justification in the 
 applications. When $a=0$ minimizers of \eqref{e:frac1} model temperature control on the boundary. 
 When the sign is negative this corresponds to a reversal of temperature control. In this case more heat is 
 injected when the temperature rises on the boundary. This corresponds to a boundary reaction. 
 
 Just as solutions to the unstable obstacle problem have different properties than the solutions of the
 obstacle problem (\cite{MW07}), minimizers of \eqref{e:f} behave differently than minimizers of \eqref{e:frac1}.
 The greatest difference is the nonseparation of the two phases. One of the main results in \cite{ALP} is the 
 separation of the free boundaries $\Gamma^+ \cap \Gamma^- = \emptyset$ when $a \geq 0$. 
 (See Section \ref{s:not} for a definition of the free
 boundaries.) In stark contrast minimizers of \eqref{e:f} are such that $\Gamma^+ = \Gamma^-$, 
 \ see Theorem \ref{t:thinfree}. 
 %Also, when $a=0$ minimizers need not be Lipschitz continuous,
 %\ see Remark \ref{r:notL}.
 
 Solutions to \eqref{e:frac1} also have some differences from solutions to \eqref{e:combust}. When $a\neq0$ solutions
 always achieve the optimal regularity $C^{0,1-a}$ for $a>0$ and $C^{1,-a}$ for $a<0$. For $a=0$ solutions may not 
 have
 the expected Lipschitz regularity whereas minimizers are always $C^{1,1}$ for the unstable obstacle problem.
 %, see
 %Remark \ref{r:notL}.  
 
 When $s>1/2 \ (a<0)$, solutions to \eqref{e:sol2} are $C^{1,-a}$, and so from the implicit function theorem the 
 free boundary is a $C^{1,-a}$ manifold wherever the gradient is nonzero. 
 In Section \ref{s:singular} 
 we give our first main result where we prove 
 Theorem \ref{t:haus} which gives an upper bound for the Hausdorff dimension of the points of the free boundary
 where the gradient vanishes. When $s \leq 1/2 \ (a \geq 0)$ the study of the free boundary becomes more difficult 
 because minimizers are not differentiable and have H\"older growth away from the free boundary. In Sections \ref{s:stable} and \ref{s:symmetry}
 we give our second main result which shows that certain symmetric solutions are stable for $s\leq 1/2$, but fail to be stable for $s>1/2$ and hence are not minimizers of 
 the functional.  
 
 %In Section 
 %\ref{s:regular} we give our second main result. We restrict ourselves to the case in which $n=3$ and prove 
 %Theorem \ref{t:cdiff} which states that the free boundary is a $C^1$ curve except at perhaps finitely many points.

 The outline of the paper is as follows 
 \begin{itemize}
 \item In Subsection \ref{s:not} we establish the notation to be used throughout the paper.
 \item In Section \ref{s:pre} we discuss existence, regularity, and first variation of minimi\-zers. 
 \item In Section \ref{s:free} we prove topological properties of the free boundary. We also study a class of limiting   
   solutions called ``blow-up'' solutions which are often useful in the study of free boundary problems. 
 \item In Section \ref{s:singular} we prove an upper bound for the Hausdorff dimension of the singular set
        of the free boundary. 
 \item In Section \ref{s:stable} we give a second variational formulation and show certain solutions with singular points are stable for $s\leq 1/2$. 
 \item In Section \ref{s:symmetry} we show that certain symmetric solutions with singular points are not minimizers for $s>1/2$. We also end with a discussion of 
 future directions. 
\end{itemize}

\subsection{Notation}  \label{s:not} 
 The notation for this paper will be as follows. Throughout the paper $2s=1-a$ and $-1<a<1$. $(x',x_n) \in R^n$ with
 $x' \in \R^{n-1}$ and $x_n \in \R$. 
 $\Omega$ will always be a smooth bounded domain that is even with respect to the $x_n$
 variable. 
 \begin{itemize}
 \item $L^2(a, \Omega) := \{f \mid f|y|^{a/2} \in L^2(\Omega)$\}.
 \item  $H^1(a,\Omega) := \{f \mid f, \nabla f \in L^2(a,\Omega)$\}.
 \item  $\Omega':= \{x \in \R^{n-1} \mid (x,0) \in \Omega\}$ 
 \item  $\Omega^+= \{(x',x_n) \in \Omega \mid x_n>0\}$
 \item  $B_r := \{x \in \R^n \mid |x| <1 \}$
 %\item $\mathcal{B} := \{x \in B \mid x_i \geq 0\}$ for all $1 \leq i \leq n$. 
 \item $\mathcal{L}_a u := \text{div}(x_n^a \nabla u)$
 \item $f^{\pm}$ denote the positive and negative parts of $x$, respectively, so that $f=f^+ - f^-$. 
 \end{itemize}
 We denote the free boundary as $\Gamma = \Gamma^+ \cup \Gamma^-$ where 
 $\Gamma^+ := \partial \{ u(\ \cdot \ , 0) > 0\}$ and $\Gamma^- := \partial \{ u(\ \cdot \ , 0) < 0\}$.

\section{Preliminaries} \label{s:pre}
 In this section we start by proving existence of minimizers to our functional \eqref{e:f}. To prove existence we state  two notions of trace. The compactness of these trace operators is discussed in \cite{ALP}. 
  \begin{proposition}  \label{p:trace}
   Let $\Omega$ be an open bounded domain with Lipschitz boundary. There exist two compact operators
    \[
     \begin{aligned}
      T_1&: H^1(a,\Omega) \hookrightarrow L^2(a,\partial \Omega)\\
      T_2&: H^1(a,\Omega) \hookrightarrow L^2(\Omega')
     \end{aligned}
    \]
    such that $T_1(\psi) = \psi |_{\partial \Omega}$ and $T_2(\psi)= \psi |_{\Omega'}$ for all $\psi \in C^1(\overline{\Omega})$. 
  \end{proposition}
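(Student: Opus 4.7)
The plan is to obtain both trace operators by the standard three-step pattern (density, boundedness, compactness), exploiting the fact that $|x_n|^a$ is a Muckenhoupt $A_2$ weight for $-1<a<1$. As a first step I would invoke the weighted analogue of Meyers--Serrin: for $A_2$ weights, $C^\infty(\overline{\Omega})$ is dense in $H^1(a,\Omega)$, so it suffices to define $T_1$ and $T_2$ on smooth functions, prove the appropriate bounded and compact estimates, and then extend by density.

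For $T_1$, I would cover $\partial\Omega$ by finitely many coordinate patches. On any patch whose closure does not meet $\{x_n=0\}$ the weight $x_n^a$ is bounded above and below by positive constants, and I can appeal to the unweighted trace theorem: the map $H^1 \hookrightarrow L^2(\partial\Omega)$ is compact by Rellich--Kondrachov. On a patch that does touch $\{x_n=0\}$ (in a tangential way, since $\Omega$ is even in $x_n$), I would flatten the boundary and use a weighted Gagliardo--Nirenberg type trace inequality of the form
\[
\int_{\partial\Omega}|T_1\psi|^2\,x_n^a\,d\sigma \;\lesssim\; \|\psi\|_{H^1(a,\Omega)}^2,
\]
which can be proved by integration along inward normals, together with the Cauchy--Schwarz inequality weighted by $x_n^a$. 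Compactness would then follow from an approximation argument: for smooth $\psi$, $T_1\psi$ lies in a weighted fractional Sobolev space on $\partial\Omega$ which embeds compactly into $L^2(a,\partial\Omega)$.

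For $T_2$, the key input is the trace characterization for Caffarelli--Silvestre extensions: the trace of $H^1(a,\Omega)$ onto $\Omega'=\Omega\cap\{x_n=0\}$ is precisely the fractional Sobolev space $H^s(\Omega')$ with $2s=1-a$, and the trace map is continuous. Since $0<s<1$, the classical Rellich--Kondrachov embedding
\[
H^s(\Omega') \hookrightarrow L^2(\Omega')
\]
is compact, and composing gives compactness of $T_2$. For the coincidence-on-$C^1$ statement, this is immediate from the density argument, since the abstract trace is defined as the continuous extension of the pointwise restriction on smooth functions.

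The main obstacle I would expect is the corner behavior near $\overline{\Omega'} \cap \partial\Omega$, where the thin space meets the thick boundary: there the weight degenerates (or blows up) exactly at the place where two different trace operators meet, and one must verify that the straightening of boundary preserves the $A_2$ structure of the weight. The evenness assumption on $\Omega$ in $x_n$ is crucial here, since it lets one reflect across $\{x_n=0\}$ and reduce the problem to one in which $\Omega'$ sits in the interior of a symmetric domain; after this reduction the compactness arguments above apply without delicate corner analysis.
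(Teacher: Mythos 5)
The paper does not actually prove this proposition: it records the statement and defers both boundedness and compactness of $T_1,T_2$ to \cite{ALP}. So your proposal is a reconstruction of an omitted proof rather than a parallel to one in the text. As such, its architecture is the standard one and is essentially sound: density of smooth functions for the $A_2$ weight $|x_n|^a$, localization into boundary patches, a trace characterization, and a compact embedding. For $T_2$ the route through the identification of the trace space of $H^1(a,\Omega)$ on $\{x_n=0\}$ with $H^s$, $2s=1-a$, followed by Rellich--Kondrachov, is a legitimate way to get compactness.

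Two soft spots are worth naming. First, the even reflection across $\{x_n=0\}$ does not remove the corner you are worried about: it makes $\Omega'$ an interior hypersurface in the $x_n$-direction, but the relative boundary of $\Omega'$, namely $\partial\Omega\cap\{x_n=0\}$, still lies on $\partial\Omega$, so the $H^s(\Omega')$ trace bound up to that edge still requires an argument. The cleaner and more elementary route (and the one effectively used in the literature this paper cites) is the absorption trace inequality $\int_{\Omega'}v^2\le \epsilon\int_{\Omega^+}|x_n|^a|\nabla v|^2+C_\epsilon\int_{\Omega^+}|x_n|^a v^2$, proved by integrating in the $x_n$-direction over thin slabs, combined with the compact embedding of $H^1(a,\Omega)$ into $L^2(a,\Omega)$ for $A_2$ weights (Fabes--Kenig--Serapioni); this yields compactness of $T_2$ (and, with the analogous boundary version, of $T_1$) without invoking any fractional trace-space theory. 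Second, for $T_1$ your compactness step rests on the unproved claim that traces land in a weighted fractional Sobolev space on a Lipschitz boundary crossing the degeneracy hyperplane, and that this space embeds compactly into $L^2(a,\partial\Omega)$; that is precisely the nontrivial content, so as written it is an assertion rather than a proof, and again the absorption-plus-weighted-Rellich argument avoids it. A minor correction: evenness of $\Omega$ in $x_n$ forces $\partial\Omega$ to meet $\{x_n=0\}$ transversally (for a ball, perpendicularly), not tangentially; this actually helps, since near that set the outer normal is nearly horizontal, so the divergence-theorem trace argument can be run with a vector field whose $x_n$-component vanishes there and no uncontrolled $|x_n|^{a-1}$ terms appear.
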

  
  As a consequence of Proposition \ref{p:trace} we obtain the existence of minimizers. 
  \begin{proposition}  
   Let $\K := \{v \in H^1(a,\Omega^+) \mid v=\phi \text{ on } \partial \Omega \cap \{x_n>0\}\}$. Then there exists $u \in \K$ such that
    \[
     J(u) \leq J(v) 
    \]
   for all $v \in \K$. 
  \end{proposition}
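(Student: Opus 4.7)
The plan is to carry out the standard direct method of the calculus of variations, with the main ingredients being the compact trace operators of Proposition \ref{p:trace} (to handle the boundary term) and the convexity of the weighted Dirichlet integral (to handle lower semicontinuity).

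First I would verify that $J$ is bounded below on $\K$ and that every minimizing sequence is bounded in $H^1(a,\Omega^+)$. Write $v = \tilde\phi + w$ where $\tilde\phi \in H^1(a,\Omega^+)$ is a fixed extension of the boundary datum $\phi$ and $w$ vanishes on $\partial\Omega \cap \{x_n>0\}$. A weighted Poincar\'e inequality (available since $|x_n|^a$ is $A_2$ for $-1<a<1$) gives $\|w\|_{L^2(a,\Omega^+)} \leq C\|\nabla w\|_{L^2(a,\Omega^+)}$, so $\|v\|_{H^1(a,\Omega^+)}$ is controlled by $\|\nabla v\|_{L^2(a,\Omega^+)}$ up to a constant depending only on $\phi$. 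Applying the compact trace $T_2$ of Proposition \ref{p:trace} yields
\[
  2\int_{\Omega'} v^- \,d\H^{n-1}
  \leq 2|\Omega'|^{1/2}\|v\|_{L^2(\Omega')}
  \leq C\|v\|_{H^1(a,\Omega^+)}
  \leq C'\bigl(\|\nabla v\|_{L^2(a,\Omega^+)} + 1\bigr),
\]
and absorbing this into the Dirichlet term via Young's inequality gives
\[
  J(v) \geq \tfrac{1}{2}\int_{\Omega^+} |\nabla v|^2 x_n^a - C_0.
\]
This shows $\inf_\K J > -\infty$ and that any minimizing sequence $\{u_k\} \subset \K$ is uniformly bounded in $H^1(a,\Omega^+)$.

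Next I would extract a weakly convergent subsequence $u_k \rightharpoonup u$ in $H^1(a,\Omega^+)$. By compactness of $T_1$, the traces $u_k|_{\partial\Omega^+}$ converge strongly in $L^2(a,\partial\Omega)$ to $u|_{\partial\Omega^+}$, so $u = \phi$ on $\partial\Omega \cap \{x_n>0\}$ and hence $u\in\K$. By compactness of $T_2$, $u_k|_{\Omega'} \to u|_{\Omega'}$ strongly in $L^2(\Omega')$; since $v\mapsto v^-$ is $1$-Lipschitz, $u_k^- \to u^-$ in $L^2(\Omega')$ and consequently
\[
  \int_{\Omega'} u_k^- \,d\H^{n-1} \longrightarrow \int_{\Omega'} u^- \,d\H^{n-1}.
\]
Finally, the map $v \mapsto \int_{\Omega^+}|\nabla v|^2 x_n^a$ is convex and continuous on $H^1(a,\Omega^+)$, so it is weakly lower semicontinuous. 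Combining this with the strong convergence of the trace terms yields $J(u) \leq \liminf_{k\to\infty} J(u_k) = \inf_\K J$, so $u$ is a minimizer.

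The only nontrivial obstacle is the lower bound for $J$, since the $-2\int u^-$ term has the unfavorable sign; but this is resolved routinely by the compact trace $T_2$ combined with Young's inequality, so no delicate estimate is required. Everything else is a standard application of the direct method once the compact embeddings of Proposition \ref{p:trace} are in hand.
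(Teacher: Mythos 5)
Your proposal is correct and follows essentially the same route as the paper: a lower bound for $J$ obtained from the compact trace operator of Proposition \ref{p:trace} (the paper folds your Poincar\'e/extension step into a trace constant depending on the boundary data and completes the estimate without Young's inequality), followed by the standard direct method with weak lower semicontinuity of the weighted Dirichlet term and strong convergence of the trace terms. The paper merely states this last part as ``the usual methods of calculus of variations,'' which you have simply written out in detail.
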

  
  \begin{proof}
   By the boundedness of the trace operator in Proposition \ref{p:trace}
    \[
     \int_{\Omega'}{v^2} \leq C_1^2 \int_{\Omega^+}{x_n^a |\nabla v|^2}.  
    \]
   where $C_1$ is a constant depending on $n,s$, the domain $\Omega$ and the values of $v$ on
   $\partial \Omega$. 
   Also from H\"older's inequality, 
    \[
     \int_{\Omega'}{|v|} \leq |\Omega'|^{1/2} \left( \int_{\Omega'}{v^2} \right)^{1/2}
    \]
   Then
    \[
     \begin{aligned}
      J(v) = \int_{\Omega^+}{x_n^a |\nabla v|^2} - 2\int_{\Omega'}{v_-}   
                                   &\geq \int_{\Omega^+}{x_n^a |\nabla v|^2} - 2|\Omega'|^{1/2} \|v \|_{L^2(\Omega')} \\
                      &\geq \int_{\Omega^+}{x_n^a |\nabla v|^2} - 2|\Omega'|^{1/2}C_1 
                      \left( \int_{\Omega^+}{x_n^a |\nabla v|^2}\right)^{1/2} \\
                      &\geq -C_1^2 |\Omega'|
     \end{aligned}
    \]
   Since $J$ is bounded by below, the existence of a minimizer follows from
   the usual methods of calculus of variations by noting that $\K$ is a closed convex set and using the trace theorem
   from Proposition \ref{p:trace}.  
  \end{proof}
  
  Our functional $J$ satisfies the following rescaling property. 
  \begin{proposition}  \label{p:rescale}
   Let $u$ be a minimizer of $J$ in $B_R^+$. Then $u_r(rx)/r^{1-a}$ is a minimizer of $J$ on $B_{R/r}^+$.  
  \end{proposition}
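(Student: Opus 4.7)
The plan is to verify that the functional $J$ is homogeneous under the rescaling $u \mapsto u_r$, with $u_r(x) := u(rx)/r^{1-a}$, and then use this homogeneity to transfer minimality. First I would compute $\nabla u_r(x) = r^{a}(\nabla u)(rx)$, so that $|\nabla u_r(x)|^2 = r^{2a}|\nabla u(rx)|^2$. Substituting $y = rx$ (with $dy = r^{n}\,dx$ and $x_n = y_n/r$) in the Dirichlet integral on $B_{R/r}^+$, the three factors $r^{2a}$ (from the gradient), $r^{-a}$ (from $x_n^{a}$), and $r^{-n}$ (from the Jacobian) combine into a single factor $r^{a-n}$.

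For the boundary term, on the thin space we have $u_r(x',0) = r^{a-1}u(rx',0)$, hence $u_r^{-}(x',0) = r^{a-1}u^{-}(rx',0)$ since $r>0$, and the change of variables $y' = rx'$ contributes $r^{-(n-1)}$. The net scaling is again $r^{a-n}$. The exponent $1-a$ in the definition of $u_r$ is precisely the one that makes the two contributions balance; I would record this step as the scaling identity
\[
J(u_r;B_{R/r}^+) = r^{a-n} J(u;B_R^+).
\]

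To conclude, I would set up a bijection between admissible competitors and use the scaling in both directions. If $w \in H^1(a, B_{R/r}^+)$ is any competitor for $u_r$, meaning $w = u_r$ on $\partial B_{R/r}^+ \cap \{x_n>0\}$, then $\tilde w(y) := r^{1-a} w(y/r)$ lies in $H^1(a, B_R^+)$ and agrees with $u$ on $\partial B_R^+ \cap \{x_n>0\}$, so it is a valid competitor for $u$. Applying the scaling identity to $\tilde w$ gives $J(\tilde w; B_R^+) = r^{n-a} J(w; B_{R/r}^+)$, and the minimality of $u$ then yields
\[
J(u_r; B_{R/r}^+) = r^{a-n}J(u; B_R^+) \leq r^{a-n} J(\tilde w; B_R^+) = J(w; B_{R/r}^+).
\]
There is no real obstacle here: the proof is a bookkeeping exercise matching the homogeneity of the $x_n^a$-weighted Dirichlet energy to that of the codimension-one term on $\Omega'$, and the exponent $1-a$ is the unique choice under which the rescaling preserves the variational problem.
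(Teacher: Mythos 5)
Your proof is correct: the scaling identity $J(u_r;B_{R/r}^+)=r^{a-n}J(u;B_R^+)$ and the competitor bijection $w\mapsto r^{1-a}w(\cdot/r)$ are exactly the routine argument the paper has in mind (it states Proposition \ref{p:rescale} without proof, treating it as standard). No gaps; this is the same approach, just written out.
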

  
  \subsection{Further Properties}
  Although the negative sign creates significant differences between minimizers of \eqref{e:thinobs} and \eqref{e:f} 
  some initial properties of solutions are the same. This subsection contains preliminary results and proofs that 
  are very similar to those contained in \cite{ALP}. When necessary a modified proof is provided. Such is the case in 
  proving the nondegeneracy results.

  \begin{proposition} \label{p:lattice}
   Let $u_1, u_2$ be minimizers of \eqref{e:f} in $\Omega$ with $u_1 \leq u_2$ on $\partial \Omega^+ \cap \{x_n>0\}$. Then
   $w_1 := \min \{u_1,u_2\}$ and  $w_2 := \max \{u_1,u_2\}$ are minimizers of \eqref{e:f} subject to their respective
   boundary conditions
  \end{proposition}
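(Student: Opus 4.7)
The plan is to use the standard lattice argument for variational problems: show that the sum of energies is preserved when we replace $(u_1,u_2)$ by $(w_1,w_2) = (\min,\max)$, and then exploit the minimality of $u_1$ and $u_2$ separately.

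First I would establish the pointwise algebraic identities that on the set $\{u_1 \le u_2\}$ we have $(w_1,w_2)=(u_1,u_2)$, while on $\{u_1 > u_2\}$ we have $(w_1,w_2)=(u_2,u_1)$. This immediately gives, for any measurable $F$,
\[
F(w_1) + F(w_2) = F(u_1) + F(u_2)
\]
pointwise, and in particular $w_1^- + w_2^- = u_1^- + u_2^-$ on $\Omega'$. For the Dirichlet part, standard truncation results for Sobolev functions (valid in $H^1(a,\Omega^+)$ since $x_n^a$ is Muckenhoupt $A_2$) give $\nabla w_1 = \nabla u_1 \chi_{\{u_1 \le u_2\}} + \nabla u_2 \chi_{\{u_1 > u_2\}}$ and symmetrically for $\nabla w_2$, so
\[
\int_{\Omega^+} x_n^a |\nabla w_1|^2 + \int_{\Omega^+} x_n^a |\nabla w_2|^2 = \int_{\Omega^+} x_n^a |\nabla u_1|^2 + \int_{\Omega^+} x_n^a |\nabla u_2|^2.
\]
Combining these two identities yields $J(w_1) + J(w_2) = J(u_1) + J(u_2)$.

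Next, I observe that since $u_1 \le u_2$ on $\partial \Omega^+ \cap \{x_n>0\}$, the boundary values on that portion of $\partial\Omega^+$ are exactly $w_1 = u_1$ and $w_2 = u_2$. Thus $w_i \in H^1(a,\Omega^+)$ is an admissible competitor against $u_i$ for $i=1,2$. By minimality we get $J(u_1)\le J(w_1)$ and $J(u_2)\le J(w_2)$. Adding these two inequalities and comparing with the energy identity above forces equality in each, so $J(u_i) = J(w_i)$ for $i=1,2$, which is precisely the claim.

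The only delicate point is the truncation identity for the weighted gradient. I expect this to be routine because the weight is $A_2$ and $H^1(a,\Omega^+)$ behaves like a standard Sobolev space with respect to min/max truncations; if needed, one can approximate $u_1,u_2$ by smooth functions in $H^1(a,\Omega^+)$ and pass to the limit, using continuity of min/max on $H^1(a,\Omega^+)$. No other step presents a genuine difficulty.
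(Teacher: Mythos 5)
Your proof is correct and is exactly the standard lattice argument (pointwise additivity of the boundary term, additivity of the weighted Dirichlet energy under $\min/\max$, then summing the two minimality inequalities) that the paper itself relies on: it states Proposition \ref{p:lattice} without proof, referring to the analogous arguments in \cite{ALP}. The only point you flag as delicate, the truncation identity for $\nabla\min$ and $\nabla\max$ in $H^1(a,\Omega^+)$, is indeed routine for the $A_2$ weight $|x_n|^a$, so no gap remains.
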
  
  
  %\begin{proof}
  % We note that
  %  \[
  %   J(w_1) + J(w_2) = J(u_1) + J(u_2).
  %  \]
  % Also, $w_1 = u_1$ and $w_2 = u_2$ on $\partial \Omega$. 
  %\end{proof}
  
  \begin{corollary}  \label{c:supsol}
   There exists a $\sup$ minimizer $u$ such that $u \geq v$ for all minimizers $v$ satisfying $v \leq u$ on 
   $\partial \Omega^+ \cap \{x_n>0\}$. Furthermore, if $u=c$ a constant on $\partial B_\rho$, then $u$ is radially
   symmetric in the $(x,0)$ variable, so that $u(x,y)=f(|x|,y)$ and $f(r, \ \cdot)$ is nondecreasing in $r$.  
  \end{corollary}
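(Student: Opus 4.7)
My plan is to (i) construct the sup minimizer by iterated lattice operations, (ii) deduce radial symmetry from the rotational invariance of $J_a$, and (iii) extract monotonicity in $r$ from a polarization argument combined with the maximality of $u$.

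For (i), fix the prescribed boundary data $\phi$ on $\partial\Omega^+\cap\{x_n>0\}$ and let $\M_\phi$ denote the class of minimizers with this trace. Proposition \ref{p:lattice} makes $\M_\phi$ stable under finite pointwise maxima. Pick a countable dense sequence $\{x_k\}\subset\Omega^+$ and, for each $k$, minimizers $v_{k,j}\in\M_\phi$ with $v_{k,j}(x_k)\to\sup_{v\in\M_\phi}v(x_k)$; the nested maxima $w_N:=\max_{k,j\leq N}v_{k,j}\in\M_\phi$ form an increasing sequence, uniformly bounded in $H^1(a,\Omega^+)$ by the same comparison-to-competitor and trace-inequality argument used in the existence proof above. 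Its pointwise and weak limit $u$ again lies in $\M_\phi$ by lower semicontinuity of $J_a$ and compactness of the trace (Proposition \ref{p:trace}); by continuity of minimizers $u(x)=\sup_{v\in\M_\phi}v(x)$ everywhere. Finally, given any minimizer $v$ with $v\leq u=\phi$ on the boundary, Proposition \ref{p:lattice} places $u\vee v\in\M_\phi$, so $u\geq u\vee v\geq v$.

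For (ii), suppose $u=c$ on $\partial B_\rho\cap\{x_n>0\}$. Any rotation $R\in O(n-1)$ acting on $x$ (trivially on $x_n$) preserves $B_\rho^+$, fixes the weight $x_n^a$, and leaves the constant boundary value unchanged, so $u\circ R\in\M_c$. The sup property gives $u\geq u\circ R$, and the analogous inequality for $R^{-1}$ gives the reverse; hence $u\circ R=u$ for every $R$ and $u(x,y)=f(|x|,y)$.

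Step (iii) is the main obstacle, because polarization across $\{x_1=\lambda\}$ for $\lambda>0$ does not respect $\partial B_\rho$. I would first use a truncation to show $u\leq c$ in $B_\rho^+$: the competitor $\min(u,c)$ has the same boundary trace, a smaller-or-equal weighted Dirichlet integral, and an obstacle integral which (whether $c\geq 0$ or $c<0$) changes only in a direction that is nonincreasing for $J_a$, so minimality forces $\{u>c\}$ to have zero weighted area and, by continuity together with the boundary data, to be empty. Then, given $0<r_1<r_2<\rho$, set $\lambda:=(r_1+r_2)/2$, let $\sigma_\lambda$ be reflection in $\{x_1=\lambda\}$, and on the lens $A_1:=B_\rho^+\cap\sigma_\lambda(B_\rho^+)$ define the polarized function
\[
v(x)=
\begin{cases}
\max(u(x),u(\sigma_\lambda x)) & x\in A_1\cap\{x_1>\lambda\},\\
\min(u(x),u(\sigma_\lambda x)) & x\in A_1\cap\{x_1<\lambda\},
\end{cases}
\]
extended by $v=u$ on $B_\rho^+\setminus A_1$. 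Since $\sigma_\lambda(\partial B_\rho)\cap B_\rho\subset\{x_1<\lambda\}$ and $u\leq c$, the function $v$ matches $u$ continuously across $\partial A_1\cap B_\rho^+$ and equals $c$ on $\partial B_\rho^+\cap\{x_n>0\}$. A direct change of variables on $A_1$, splitting into $\{u(x)\gtrless u(\sigma_\lambda x)\}$ and using the $\sigma_\lambda$-invariance of $x_n^a$, shows that both the weighted Dirichlet integral and the obstacle integral are preserved, so $v\in\M_c$. The sup property then yields $u\geq v$ in $B_\rho^+$; evaluating at $x=(r_2,0,\dots,0,y)\in A_1\cap\{x_1>\lambda\}$ gives $f(r_2,y)=u(x)\geq v(x)\geq u(\sigma_\lambda x)=f(r_1,y)$, which is the desired monotonicity.
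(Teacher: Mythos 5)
The paper states this corollary without a written proof, presenting it as a direct consequence of Proposition \ref{p:lattice}, so there is no argument of record to compare against; judged on its own merits, your proof is correct and follows the route the corollary implicitly intends. The Perron-type construction of the maximal minimizer by iterated lattice operations, the rotation-invariance argument that exploits maximality to get $u\circ R=u$, and the polarization across $\{x_1=\lambda\}$ for the monotonicity are all sound: in particular you correctly identified that gluing the polarized competitor along $\sigma_\lambda(\partial B_\rho)\cap B_\rho^+$ requires the a priori bound $u\le c$, and that polarization preserves both the weighted Dirichlet term and the thin-space term (the reflection fixes $x_n$, hence the weight and the thin space), so the polarized function is again a minimizer with trace $c$ and maximality yields $f(r_2,y)\ge f(r_1,y)$. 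Two small remarks: the bound $u\le c$ is available more directly, since minimizers are $a$-subharmonic after even reflection (see the remark following Corollary \ref{c:nondegen}) and therefore obey the maximum principle; and in your truncation argument what minimality actually forces is $\nabla u=0$ a.e.\ on $\{u>c\}$ (rather than that set having ``zero weighted area''), after which local constancy, connectedness, and the boundary data give emptiness, as you indicate.
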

  
  %\begin{proof}
  % From Lemma \ref{p:lattice} we obtain that $u:=\sup v$ satisfies $J(u)=J(v)$, and so $u$ is a minimizer. 
  % Any rotation of $u$ is again a minimizer which establishes the radial symmetry. That $f(r, \cdot )$ is 
  % increasing in $r$ is a result of the symmetrization techniques shown in \cite{ALP}. 
  %\end{proof}
 
 This next proposition will enable us to prove a nondegeneracy result. 
 	\begin{proposition}  \label{p:nondegen1}
 	 Let $u$ be a minimizer of \eqref{e:f}. There exists $\epsilon>0$ depending on $n,s$ such that if 
 	 $u\leq \epsilon$ on $\partial B_1^+ \cap \{x_n>0\}$, then $u(0)<0$. 
 	\end{proposition}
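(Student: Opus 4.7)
The strategy is to reduce to the case of constant boundary data by comparing $u$ with the sup minimizer from Corollary~\ref{c:supsol}. Let $W$ denote the sup minimizer of $J$ on $B_1^+$ with $W \equiv \epsilon$ on $\partial B_1 \cap \{x_n > 0\}$. Since $u \leq \epsilon = W$ on this boundary, the sup-minimizer property of Corollary~\ref{c:supsol} gives $u \leq W$ throughout $B_1^+$, so in particular $u(0) \leq W(0, 0)$. It therefore suffices to prove $W(0, 0) < 0$ for $\epsilon$ sufficiently small depending only on $n, s$.

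By the second half of Corollary~\ref{c:supsol}, $W(x', x_n) = f(|x'|, x_n)$ with $f(r, \cdot)$ nondecreasing in $r$. Consequently, if $W(0, 0) \geq 0$ then $W(\cdot, 0) \geq 0$ throughout $B_1'$, forcing $W^- \equiv 0$ on $B_1'$ and $J(W) \geq 0$. Thus it is enough to exhibit a competitor $v$ with $v = \epsilon$ on $\partial B_1 \cap \{x_n > 0\}$ and $J(v) < 0$: then $J(W) \leq J(v) < 0$ and the contrapositive yields $W(0, 0) < 0$.

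For the competitor I will fix a nonnegative, radially symmetric $\eta \in C_c^\infty(B_1)$ with $\eta \equiv 1$ on $B_{1/2}$, and set $v_\epsilon := \epsilon - \lambda \eta$ with $\lambda > 0$ chosen below. Since $\eta$ vanishes near $\partial B_1$, $v_\epsilon = \epsilon$ on $\partial B_1 \cap \{x_n > 0\}$, and $v_\epsilon^- = (\lambda \eta - \epsilon)_+ \geq \lambda \eta - \epsilon$ on $B_1'$, which yields
\[
J(v_\epsilon) \leq \lambda^2 A - 2\lambda B + 2\epsilon |B_1'|,
\]
where $A := \int_{B_1^+} x_n^a |\nabla \eta|^2$ and $B := \int_{B_1'} \eta$ are positive constants depending only on $n, s$. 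Taking $\lambda := B/A$ reduces the bound to $-B^2/A + 2\epsilon|B_1'|$, which is strictly negative whenever $\epsilon < B^2/(2A|B_1'|)$; this is the desired $n, s$-dependent threshold. The only subtle point is the scaling mismatch between the quadratic Dirichlet energy and the linear source term in $J$, which is exactly what ensures $J(v_\epsilon)$ can be made strictly negative once $\epsilon$ is small enough, while the essential structural input is the radial monotonicity of the sup minimizer from Corollary~\ref{c:supsol}.
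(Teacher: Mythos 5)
Your proof is correct and follows essentially the same route as the paper: compare $u$ with the sup minimizer for constant boundary data $\epsilon$ from Corollary~\ref{c:supsol}, use its radial monotonicity to reduce to showing that minimizer is negative somewhere on the thin space, and rule out the nonnegative case by exhibiting a competitor of the form constant minus a small bump with strictly negative energy. Your version merely makes the $\epsilon$-threshold explicit (via the choice $\lambda=B/A$) where the paper argues qualitatively, so there is nothing substantive to add.
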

 	
 	\begin{proof}
 	 If $u \leq \epsilon$, then $u \leq v$ where $v$ is the $\sup$
 	 solution given in Corollary \ref{c:supsol} with $v \equiv \epsilon$ on $\partial B_1$. 
 	 We will show that $v(0)<0$ for $\epsilon$ small enough. If $v(x,0) \geq 0$, for all $x$,
 	 then 
 	  \[
 	   J(v) = \int_{B_1^+}{|y|^a |\nabla v|^2} \leq J(\epsilon) =0.
 	  \]
 	 where $\epsilon(x,y) \equiv \epsilon$ on all of $B_1^+$ (the constant function). 
 	 Then $v \equiv \epsilon$ and $J(v)=0$. 
 	 Now let $w$ be a candidate such that $w<0$ in $B_1$ and $w\equiv 0$ on $\partial B_1$. 
 	 Notice that 
 	  \[
 	   J(Mw)= M \left(M\int_{B_1^+}{x_n^a |\nabla w|^2} - 2\int_{B_{1}'}{w^-}  \right)
 	  \]
 	 so that there exists $M$ small enough such that $J(Mw)<0$. 
 	 Then for $\epsilon$ small enough,
 	 $J(w+\epsilon)<0=J(\epsilon)$ for $\epsilon$ small.    
 	\end{proof} 
 	The rescaling property of Proposition \ref{p:rescale} combined with Proposition \ref{p:nondegen1} gives
 	\begin{corollary}  \label{c:nondegen}
 	 Let $u$ be a minimizer of \eqref{e:f} in $B_R(x_0,0)^+$ with $u(x_0,0)=0$. Then
 	  \[
 	   \sup_{B_{r}^+(x_0,0)} u \geq Cr^{1-a} \text{  for every  } r<R
 	  \]
 	 where $C$ is a constant depending only on dimension $n$ and $s$. 
 	\end{corollary}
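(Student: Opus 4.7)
The plan is to reduce to Proposition \ref{p:nondegen1} by translating to the origin and applying the scaling of Proposition \ref{p:rescale}. Since the functional $J$ depends only on $\nabla v$ in the bulk and on the thin-space values of $v$, it is invariant under horizontal translations; thus I may assume without loss of generality that $x_0 = 0$, so that $u$ is a minimizer of $J$ on $B_R^+$ with $u(0,0) = 0$.

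Next I would argue by contradiction. Let $\epsilon > 0$ be the constant furnished by Proposition \ref{p:nondegen1} and set $C := \epsilon$. Suppose for some $r < R$ that
\[
\sup_{B_r^+} u < \epsilon\, r^{1-a}.
\]
Define the rescaled function $u_r(x) := u(rx)/r^{1-a}$. By Proposition \ref{p:rescale}, $u_r$ is a minimizer of $J$ on $B_{R/r}^+ \supset B_1^+$, and of course $u_r(0) = u(0,0)/r^{1-a} = 0$. The contradiction hypothesis rescales to
\[
\sup_{B_1^+} u_r < \epsilon,
\]
and in particular $u_r \le \epsilon$ on $\partial B_1^+ \cap \{x_n > 0\}$.

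Proposition \ref{p:nondegen1} then forces $u_r(0) < 0$, contradicting $u_r(0) = 0$. Therefore $\sup_{B_r^+} u \ge \epsilon\, r^{1-a}$ for every $r < R$, and the corollary follows with $C = \epsilon$, a constant depending only on $n$ and $s$ through Proposition \ref{p:nondegen1}. There is no real obstacle here; the only subtlety to check is that the sub-level comparison needed to invoke Proposition \ref{p:nondegen1} uses the boundary values on $\partial B_1^+ \cap \{x_n>0\}$, which is immediate from the bound $\sup_{B_1^+} u_r < \epsilon$.
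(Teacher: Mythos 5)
Your argument is correct and is exactly the route the paper intends: the corollary is stated there as an immediate consequence of the rescaling property (Proposition \ref{p:rescale}) applied at the translated point together with Proposition \ref{p:nondegen1}, which is precisely your contradiction/rescaling argument with $C=\epsilon$. No gaps; the boundary-value point you flag is handled by the continuity of minimizers, as you note.
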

 	
 	\begin{remark}
 	 The $\sup$ can be taken over $\partial B_r^+ \cap \{x_n>0\}$ in Corollary \ref{c:nondegen} since minimizer of \eqref{e:f}
 	 are solutions to \eqref{e:sol2} (see upcoming Proposotion \ref{p:thinsol}) and hence are $a$-subharmonic and therefore satisfy the maximum principle \cite{FKJ}.
 	  %\[
 	  % \sup_{B_{r}(x,y)} u =  \sup_{\partial B_{r}(x,y)} u.
 	  %\]
 	 %This is due to the inequality 
 	 % \[
 	 %  J(u -  \sup_{\partial B_{r}(x,y)} u, B_r(x,y)) \leq J(u, B_r(x,y))
 	 % \]
 	 %with equality if and only if $u \leq \sup_{\partial B_{r}(x,y)}$ in $B_r(x,y)$. 
 	\end{remark}
 	
 	\begin{remark}
 	 The above result gives nondegeneracy in the full domain $\Omega^+$. A stronger nondegeneracy result for the thin space
 	 $\Omega'$ is given later in Proposition \ref{p:thinnon}. 
 	\end{remark}
 
 This next proposition shows the weighted boundary derivative on $\Omega'$ for $u$ will be constant in a measure theoretic sense whenever ${u<0}$. 
 \begin{proposition} \label{p:thinsol}
   Let $u$ be a minimizer of \eqref{e:f} in $\Omega$. Then for every $\psi \in C_0^2(\Omega)$
    \begin{equation}  \label{e:var2}
     \int_{\Omega^+}{x_n^a \langle \nabla u, \nabla \psi \rangle} = 
      \int_{\Omega' \cap \{u<0\}}{\psi}
    \end{equation}
  \end{proposition}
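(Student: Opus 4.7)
The plan is a first-variation argument; the only subtlety is that $v\mapsto v^-$ is Lipschitz but not differentiable, so the usual identity must be obtained from one-sided derivatives in $t$.

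Fix $\psi\in C_0^2(\Omega)$ and consider $J_a(u+t\psi)$. The Dirichlet term $\int_{\Omega^+} x_n^a |\nabla(u+t\psi)|^2$ is smooth in $t$ with derivative $2\int_{\Omega^+} x_n^a \langle \nabla u,\nabla\psi\rangle$ at $t=0$. For the boundary term I would split $\Omega'$ according to the sign of $u(\cdot,0)$ and compute the pointwise one-sided $t$-derivatives of $(u+t\psi)^-$: on $\{u<0\}$ the map $t\mapsto(u+t\psi)^-$ is affine near $t=0$ with derivative $-\psi$; on $\{u>0\}$ it vanishes; on $\{u(\cdot,0)=0\}$ one has $(t\psi)^-=t^+\psi^-+t^-\psi^+$, so the right derivative is $\psi^-$ and the left is $-\psi^+$. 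The uniform bound $|(u+t\psi)^- - u^-|\le |t||\psi|$ allows dominated convergence to pass these to the integral.

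Minimality then yields $\tfrac{d^+}{dt}J_a(u+t\psi)|_{t=0}\ge 0$ and $\tfrac{d^-}{dt}J_a(u+t\psi)|_{t=0}\le 0$, which combine into
\[
 \int_{\Omega'\cap\{u=0\}}\!\psi^-\,d\mathcal{H}^{n-1}\;\le\; \int_{\Omega^+}\! x_n^a\langle\nabla u,\nabla\psi\rangle + \int_{\Omega'\cap\{u<0\}}\!\psi\;\le\;-\int_{\Omega'\cap\{u=0\}}\!\psi^+\,d\mathcal{H}^{n-1}.
\]
Chaining the outer inequalities forces $\int_{\Omega'\cap\{u=0\}}|\psi|\,d\mathcal{H}^{n-1}\le 0$ for every $\psi\in C_0^2(\Omega)$, hence $\mathcal{H}^{n-1}(\Omega'\cap\{u(\cdot,0)=0\})=0$ by testing against nonnegative bumps localized at arbitrary points. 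With the free-boundary terms vanishing, the two inequalities collapse to the identity \eqref{e:var2}.

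The main obstacle is precisely the nondifferentiability of the functional on the contact set $\{u=0\}$. The proposed argument sidesteps any appeal to the later nondegeneracy-based result that the free boundary has $\mathcal{H}^{n-1}$-measure zero by producing that fact together with the first-variation equation simultaneously.
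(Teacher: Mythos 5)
Your argument is correct, and it is the same first-variation idea as the paper's proof, but organized through a different decomposition. The paper perturbs only with $\epsilon>0$, splits $(u+\epsilon\psi)^--u^-$ according to the sign configurations of $u$ and $u+\epsilon\psi$, discards the terms with favorable sign, and then recovers equality by running the resulting one-sided inequality with $-\psi$ in place of $\psi$; in that route the contact set $\{u(\cdot,0)=0\}$ never has to be isolated. You instead compute the exact one-sided derivatives in $t$ (using convexity of $t\mapsto(u+t\psi)^-$ and the $1$-Lipschitz bound to justify dominated convergence on the trace), keep the $\{u=0\}$ contributions $\psi^-$ and $-\psi^+$, and let the two-sided minimality sandwich force both $\mathcal{H}^{n-1}(\Omega'\cap\{u(\cdot,0)=0\})=0$ and the identity at once. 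What your version buys is that the measure-zero statement, which the paper extracts only afterwards by revisiting this proof (Proposition \ref{p:nosep} and the remark opening Section \ref{s:free}), comes out as an explicit byproduct; what the paper's version buys is that it never needs to discuss the contact set at all. One bookkeeping remark: your sandwich collapses to $\int_{\Omega^+}x_n^a\langle\nabla u,\nabla\psi\rangle=-\int_{\Omega'\cap\{u<0\}}\psi$, which is exactly what the paper's own proof and \eqref{e:sol2} give; the displayed \eqref{e:var2} carries the opposite sign and is inconsistent with \eqref{e:sol2}, so the discrepancy is a typo in the stated proposition rather than an error in your derivation.
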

 	
 	\begin{proof}
 	 Let $\epsilon >0$ and $\psi \in C_0^2(\Omega)$. Then 
 	  \[
 	   \int_{B_1^+}{x_n^a |\nabla (u+\epsilon \psi)|^2} - 2\int_{B_{1}'}{(u+\epsilon \psi)^-}
 	    \geq \int_{B_1^+}{x_n^a |\nabla u|^2} - 2\int_{B_{1}'}{u^-}.
 	  \]
 	 so that 
 	  \[
 	   \begin{aligned}
 	    \int_{B_1^+}{x_n^a \langle \nabla u , \nabla \psi \rangle} 
 	     &\geq \lim_{\epsilon \to 0} \int_{B_{1}'}{\frac{-(u+\epsilon\psi)+u}{\epsilon}
 	      \chi_{\{u+\epsilon\psi <0\}} \chi_{\{u < 0\}}} \\
 	     & \quad +  \int_{B_{1}'}{\frac{-(u+\epsilon\psi)}{\epsilon}
 	      \chi_{\{u+\epsilon\psi <0\}} \chi_{\{u \geq 0\}}} \\
 	     & \quad +  \int_{B_{1}'}{\frac{u}{\epsilon}
 	      \chi_{\{u+\epsilon\psi \geq 0\}} \chi_{\{u < 0\}}}. 
 	   \end{aligned}
 	  \]
 	 Then
 	  \[
 	   \begin{aligned}
 	    -\int_{B_1^+}{x_n^a \langle \nabla u , \nabla \psi \rangle} 
 	     &\leq \lim_{\epsilon \to 0} \int_{B_{1}'}{\psi
 	      \chi_{\{u+\epsilon\psi <0\}} \chi_{\{u < 0\}}} 
 	      +  \int_{B_{1}'}{\frac{-u}{\epsilon} 
 	      \chi_{\{u+\epsilon\psi \geq 0\}} \chi_{\{u < 0\}}}\\
 	     &\leq  \int_{B_{1}'}{\psi
 	      \chi_{\{u+\epsilon\psi <0\}} \chi_{\{u < 0\}}} 
 	      +  \int_{B_{1}'}{\psi
 	      \chi_{\{u+\epsilon\psi \geq 0\}} \chi_{\{u < 0\}}}\\
 	     &=\int_{B_{1}'\cap \{ u<0 \} }{\psi}.
 	   \end{aligned}
 	  \]
 	 Notice that while we assumed $\epsilon>0$, we made no assumption on the sign of $\psi$. Therefore, by 
 	 substituting $\phi = -\psi$ we may conclude equality in the above inequality, and the proof is 
 	 finished. 
 	\end{proof}
 
 The equality \eqref{e:var2} implies the following regularity result for $a \neq 0$ (see \cite{ALP}). 
  \begin{theorem}  \label{t:optimalreg} 
   Let $u$ be a minimizer with $a \neq 0$. If $0<s<1/2$, then $u \in C^{0,1-a}(\Omega^+ \cup \Omega')$ . 
   If $1/2 <s<1$, then 
   $u \in C^{1,-a}(\Omega^+\cup \Omega')$. The interior bounds up to the thin space are given by
    \[
     \begin{aligned}
      \|u \|_{C^{0,1-a}}(\overline{B^+_{r/2}}) \leq C \| u \|_{L^2(a,B_r^+)} \\
      \|u \|_{C^{1,-a}}(\overline{B^+_{r/2}}) \leq C \| u \|_{L^2(a,B_r)} \\
     \end{aligned}
    \]
   where $C$ is a constant depending only on $n$ and $a$.  
   If $s=0$, then $u \in C^{0,\alpha}(B_{1/2}^+ \cup B_{1/2}')$ for every $\alpha<1$.
   Furthermore,
    \[
     \| u \|_{C^{0,\alpha}(\overline{B_{1/2}^+})} \leq C \| u \|_{L^2(B_1)}
    \]
   Where $C=C(n,\alpha)$ is a constant depending on dimension $n$ and $\alpha$.
  \end{theorem}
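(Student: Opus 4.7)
The plan is to leverage the weak equation established in Proposition \ref{p:thinsol} and reduce the problem to the regularity theory for $A_2$-weighted degenerate/singular elliptic equations with bounded Neumann-type boundary data developed in \cite{ALP} for the two-phase problem. The key observation is that in Proposition \ref{p:thinsol} the right hand side $\chi_{\{u<0\}}$ is uniformly bounded by $1$, so even though the problem is nonlinear through the dependence of the free boundary on $u$, from the point of view of the linear theory we are looking at an equation with $L^\infty$ Neumann-type data.

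First I would extend $u$ evenly across the thin space $\{x_n = 0\}$ to obtain a function on all of $\Omega$. Evenness plus \eqref{e:var2} then gives that the even extension solves $\mathcal{L}_a u = -2\chi_{\{u<0\}} \mathcal{H}^{n-1}\lfloor \Omega'$ in the distributional sense on $\Omega$, i.e.\ $u$ is $\L_a$-harmonic off the thin space and carries bounded weighted co-normal data on $\Omega'$. Since the weight $|x_n|^a$ belongs to the Muckenhoupt class $A_2$ for $-1<a<1$, one has at one's disposal the Harnack inequality, Moser iteration, and De Giorgi–Nash–Moser Hölder estimates of Fabes–Kenig–Serapioni that are already used in \cite{CS07} and \cite{ALP}.

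The next step is to run the boundary Cacioppoli/De Giorgi iteration in a half-ball sitting against the thin space, using as test functions truncations of $u$ times cut-offs, and controlling the Neumann contribution by $\|\chi_{\{u<0\}}\|_{L^\infty}|\Omega'\cap B_r|\le C r^{n-1}$. Balancing this with the weighted Dirichlet energy $\int x_n^a|\nabla u|^2$, which scales like $r^{n-1+a}\cdot r^{2(1-a)}$ on a function of growth $r^{1-a}$, one obtains the natural regularity exponent $1-a = 2s$. This gives the $C^{0,1-a}$ estimate in the degenerate case $0<s<1/2$ and, iterating on difference quotients tangential to the thin space and then using the equation to recover the normal derivative, the $C^{1,-a}$ estimate in the singular case $1/2<s<1$; the scaling is consistent with Proposition \ref{p:rescale}, which tells us $r^{-(1-a)}u(r\,\cdot)$ is again a minimizer, so any Hölder/$C^{1,\alpha}$ estimate must respect this exponent. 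For $s=0$ the weight is Lebesgue and the data is bounded but not better, whence one only recovers $C^{0,\alpha}$ for every $\alpha<1$ by the De Giorgi–Moser theory for $L^\infty$ boundary data.

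The main obstacle is making the boundary estimate work uniformly up to the thin space where the weight degenerates ($a>0$) or becomes singular ($a<0$); this is where one must appeal to the specific estimates from \cite{ALP} rather than interior regularity alone. In particular, upgrading from the Hölder statement for $s<1/2$ to the gradient statement for $s>1/2$ requires that $x_n^a u_{x_n}$ extend continuously to $\Omega'$ with $L^\infty$ trace, which follows from the Neumann boundary condition together with the improved interior Schauder theory for $\L_a$. Since these are precisely the boundary regularity results proved in \cite{ALP} in the two-phase setting, and since the minimizer here satisfies the same type of weak equation with bounded right hand side, I would simply verify that the hypotheses of those theorems are met and cite them for the conclusion.
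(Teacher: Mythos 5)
Your approach is essentially the paper's: the paper deduces the theorem directly from the weak formulation \eqref{e:var2}, whose right-hand side is bounded by $1$, and cites the boundary regularity theory for the $A_2$-weighted operator developed in \cite{ALP} — precisely the reduction to linear theory with bounded weighted Neumann data that you describe. The only caveat is cosmetic: the case labelled $s=0$ in the statement is really $a=0$ (i.e.\ $s=1/2$), which is how you in fact read it when invoking the unweighted De Giorgi--Moser theory.
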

 When $a \neq 0$ it was necessary to use that the solutions in \cite{ALP} were such that $u^{\pm}$ are both subharmonic
 so that the ACF monotonicity formula could be utilized. Notice that minimizers of \eqref{e:f} are such that $u^-$ is 
 superharmonic. Therefore, we may not expect Lipschitz regularity when $a=0$. 
 %We will prove later that this is the
 %case. Namely, that if $0 \in \Gamma$, then $u$ is not Lipschitz. 
 This next proposition will transfer
 the H\"older regularity from the thin space to the thick space when $a \neq 0$. 
 
  %\begin{proof}
  % By Proposition \ref{p:almostreg} we have the H\"older regularity on the thin space. to transfer it to the thick
  % space we note that $u$ is harmonic for $y>0$. The regularity then transfers. 
  %\end{proof}
 Monotonicty formulas are extremely useful in proving that so called ``blow-ups'' are homogeneous.
 We write down here a Weiss-type monotonicity formula, whose proof is the same as given in \cite{ALP}.
  \begin{proposition} \label{p:weiss}
   Let $u$ be a minimizer. Then the functional
    \[
     \begin{aligned}
      W(r)=W(r,u)&:= \frac{1}{r^{n-a}} \int_{B_r^+}{x_n^a |\nabla u|^2} - 
                                        \frac{2}{r^{n-a}}\int_{B_{r}'}{u^-} \\
                 & \quad - \frac{1-a}{r^{n+1-a}} \int_{(\partial B_r)^+} {x_n^a u^2 }                       
     \end{aligned}
    \]
   is nondecreasing for $0<r<1$. Furthermore,
    \[
     W(r_2)-W(r_1)=\frac{2}{r^{n-a}} 
       \int_{r_1}^{r_2}\int_{(\partial B_r)^+}{x_n^a \left(\frac{(1-a)u}{r} - u_{\nu} \right)^2} \ dr
    \]
   so that $W$ is constant on $[r_1,r_2]$ if and only if $u$ is homogeneous of degree $2s=1-a$ on the ring 
   $r_1<|x|<r_2$. 
  \end{proposition}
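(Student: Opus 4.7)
The plan is to execute the standard internal-rescaling argument for a Weiss monotonicity formula, using the scale-invariance from Proposition \ref{p:rescale}. Set $u_r(y) := u(ry)/r^{1-a}$. A change of variables, together with the identity $\nabla u_r(y)=r^a\nabla u(ry)$, converts the three terms defining $W(r)$ into integrals over the unit ball, yielding
\[
W(r)=\int_{B_1^+} y_n^a|\nabla u_r|^2-2\int_{B_1'} u_r^- -(1-a)\int_{(\partial B_1)^+} y_n^a u_r^2.
\]
A direct calculation also gives
\[
\partial_r u_r(y)=\tfrac{1}{r}\bigl(y\cdot\nabla u_r(y)-(1-a) u_r(y)\bigr).
\]

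Next I would differentiate $W(r)$ term by term. The gradient integral yields $2\int_{B_1^+} y_n^a\nabla u_r\cdot\nabla(\partial_r u_r)$; the $u_r^-$ piece contributes $2\int_{B_1'}\partial_r u_r\,\chi_{\{u_r<0\}}$ (justified by the measure-theoretic argument already used in the proof of Proposition \ref{p:thinsol}, which avoids any pointwise differentiability of $(\cdot)^-$ on $\{u_r=0\}$); and the boundary term is routine. Applying the weak equation \eqref{e:sol2} with test function $\psi=\partial_r u_r$ (extended to non-compactly-supported test functions via the divergence theorem together with the flux identity \eqref{e:fsol}) gives
\[
\int_{B_1^+} y_n^a\nabla u_r\cdot\nabla(\partial_r u_r)=\int_{(\partial B_1)^+} y_n^a\, u_{r,\nu}\,\partial_r u_r - \int_{B_1'\cap\{u_r<0\}}\partial_r u_r.
\]
The thin-space integrals cancel, and substituting $\partial_r u_r=\tfrac{1}{r}(u_{r,\nu}-(1-a)u_r)$ on $(\partial B_1)^+$ collapses everything to the nonnegative expression
\[
W'(r)=\tfrac{2}{r}\int_{(\partial B_1)^+} y_n^a\bigl(u_{r,\nu}-(1-a)u_r\bigr)^2 \;\ge\; 0.
\]

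Scaling back to the variables $x=ry$ produces the stated integrand $\tfrac{2}{r^{n-a}}\int_{(\partial B_r)^+}x_n^a\bigl(\tfrac{(1-a)u}{r}-u_\nu\bigr)^2$, and integrating in $r$ yields the displayed identity for $W(r_2)-W(r_1)$. The characterization of constancy is immediate: $W'\equiv 0$ on an interval forces $u_\nu=\tfrac{(1-a)u}{r}$ on every sphere in the ring, which by Euler's identity is exactly $(1-a)$-homogeneity. The main obstacle is the rigorous justification of using the non-compactly-supported $\partial_r u_r$ as a test function in \eqref{e:sol2}: one approximates by cut-offs supported in $\{y_n>\varepsilon\}$ and uses \eqref{e:fsol} to pass to the limit, with the weighted regularity of Theorem \ref{t:optimalreg} ensuring the weighted boundary fluxes behave appropriately. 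The differentiation of $\int u_r^-$ across $\{u_r=0\}$ is handled exactly as in the proof of Proposition \ref{p:thinsol}, with no further input required.
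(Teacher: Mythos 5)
Your argument is correct and is essentially the proof the paper intends: the paper omits it, deferring to \cite{ALP}, where the same rescaling-and-differentiation computation (scale to $u_r$ on $B_1$, differentiate in $r$, test the weak equation with $\partial_r u_r$ via the divergence theorem, cancel the thin-space integrals, and read off $(1-a)$-homogeneity from $W'\equiv 0$) is carried out. Note only that the cancellation of the two thin-space terms hinges on the sign of the first variation as in \eqref{e:sol2} (which is what minimality actually yields; \eqref{e:var2} as displayed carries a sign typo), and your identities are consistent with that correct sign.
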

  
  \begin{corollary}  \label{c:homogeneous}
   Let $u$ be a minimizer of \eqref{e:f} with $u(x_0,0)=0$. If $a =0$ assume that 
    \begin{equation}  \label{e:notL}
     \sup_{B_r(x_0)} |u| \leq Cr \text{  for every  } r<r_0 \text{  for  some }  r_0.
    \end{equation}
   If $a<0$ assume that $\nabla_x u(x_0)=0$. Then for any sequence $r_k \to 0$, 
   there exists a subsequence such that the rescalings 
    \[
     u_{r_k}(x):= \frac{u(x_0 + r_kx)}{r_k^{2s}}
    \]
   converge to $u_0$ which is a minimizer of \eqref{e:f} in every compact subset $K \Subset \R^{n}$ and
   $u_0$ is homogeneous of degree $2s=1-a$.  
  \end{corollary}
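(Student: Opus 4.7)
The plan is the classical blow-up scheme: uniform estimates, compactness together with minimality of the limit, and rigidity via the Weiss monotonicity formula. Assume without loss of generality that $x_0 = 0$. By Proposition \ref{p:rescale}, each $u_{r_k}$ is a minimizer of $J_a$ on $B_{R/r_k}^+$, so on any fixed ball $B_\rho^+$ the rescalings are minimizers for all sufficiently large $k$. The first task is a uniform pointwise bound $|u_{r_k}(x)| \le C|x|^{1-a}$ on each fixed compact set. For $0 < a < 1$ this is immediate from $u(0) = 0$ and the $C^{0,1-a}$ estimate of Theorem \ref{t:optimalreg}; for $-1 < a < 0$, the hypothesis $\nabla_{x'} u(0) = 0$ combined with the fact that $\lim_{x_n \to 0^+} x_n^a \partial_{x_n} u$ is bounded (forcing $\partial_{x_n} u \to 0$ on $\Omega'$ since $a < 0$) yields $\nabla u(0) = 0$, and the $C^{1,-a}$ regularity then gives the same growth; for $a = 0$ it is the standing hypothesis \eqref{e:notL}. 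A Caccioppoli inequality, obtained by testing minimality against $(1-\eta^2) u_{r_k}$, converts this into a uniform $H^1(a, B_\rho^+)$ bound.

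Next, the weighted Rellich theorem and the compactness of the trace operators in Proposition \ref{p:trace} furnish, along a subsequence (with a diagonal extraction over $\rho \to \infty$), a limit $u_0$ such that $u_{r_k} \rightharpoonup u_0$ weakly in $H^1(a, B_\rho^+)$, strongly in $L^2(a, B_\rho^+)$, and with traces converging strongly in $L^2(B_\rho')$ for every $\rho$. To check that $u_0$ is itself a minimizer on each $B_\rho^+$, take any competitor $v$ with $v = u_0$ outside $B_\rho$ and build a competitor $v_k := \eta v + (1-\eta) u_{r_k}$ for $u_{r_k}$ with $\eta$ a radial cutoff supported on a thin shell $B_{\rho+\delta} \setminus B_\rho$. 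The cross-term $|\nabla \eta| \cdot |v - u_{r_k}|$ is controlled in $L^2(a)$ via the strong convergence $u_{r_k} \to u_0 = v$ on the shell (a Fubini argument picks a good interior radius), and passing to the limit in $J_a(u_{r_k}) \le J_a(v_k)$ by weak lower semicontinuity of the Dirichlet part and strong trace convergence for $\int u_{r_k}^- \to \int u_0^-$ yields $J_a(u_0) \le J_a(v)$ after sending $\delta \to 0$.

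Finally, the Weiss functional is scale-covariant: $W(\rho, u_{r_k}) = W(\rho r_k, u)$ for every $\rho > 0$. By the monotonicity of $W(\cdot, u)$ from Proposition \ref{p:weiss} together with the uniform $L^\infty$-growth bound from the first step, the limit $W_0 := W(0^+, u)$ exists and is finite, so $W(\rho, u_{r_k}) \to W_0$ independently of $\rho$. The strong convergences from the second step (combined with Fubini to secure strong $L^2(a)$ convergence on $\partial B_\rho$ for a.e.\ $\rho$) give $W(\rho, u_{r_k}) \to W(\rho, u_0)$, so $W(\cdot, u_0) \equiv W_0$ is constant, and the rigidity clause of Proposition \ref{p:weiss} forces $u_0$ to be homogeneous of degree $2s = 1-a$. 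The main obstacle is the second step: one must obtain strong enough thin-space convergence to simultaneously preserve the minimization inequality and to identify the limit of $W(\rho, u_{r_k})$, which is delivered by the trace compactness of Proposition \ref{p:trace} combined with the uniform growth control from the first step.
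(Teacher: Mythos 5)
Your overall scheme is the intended one: the paper gives no separate proof of this corollary, treating it as the standard consequence of the rescaling property (Proposition \ref{p:rescale}), the compactness statement for minimizers (Proposition \ref{p:limitsol}), the growth estimates from Theorem \ref{t:optimalreg}, and the Weiss formula (Proposition \ref{p:weiss}). Your case analysis for the uniform bound $|u(x_0+x)|\le C|x|^{1-a}$ is correct (in particular the observation that for $a<0$ the boundedness of $x_n^a\partial_{x_n}u$ forces $\partial_{x_n}u=0$ on the thin space, so $\nabla_{x'}u(x_0)=0$ gives vanishing of the full gradient), as are the Caccioppoli bound, the scale covariance $W(\rho,u_{r_k})=W(\rho r_k,u)$, and the finiteness of $W(0^+,u)$ coming from the growth bound.

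The one step that does not come for free is the claim $W(\rho,u_{r_k})\to W(\rho,u_0)$. The convergences you actually establish in your second step --- weak $H^1(a,B_\rho^+)$, strong $L^2(a,B_\rho^+)$, strong $L^2(B_\rho')$ traces, and (via Fubini) convergence on $\partial B_\rho$ for a.e.\ $\rho$ --- take care of the thin-space term and the spherical term of $W$, but for the weighted Dirichlet term weak convergence gives only lower semicontinuity; as written you obtain $W(\rho,u_0)\le \lim_k W(\rho,u_{r_k})=W(0^+,u)$ rather than equality, and the rigidity clause of Proposition \ref{p:weiss} cannot be invoked. The repair is already contained in your own gluing construction: take the competitor $v=u_0$ glued to $u_{r_k}$ on a thin shell (choosing a good Fubini radius so the mismatch is small in $L^2(a)$) to get $\limsup_k\int_{B_\rho^+}x_n^a|\nabla u_{r_k}|^2\le\int_{B_\rho^+}x_n^a|\nabla u_0|^2$ up to errors that vanish as $\delta\to 0$; combined with lower semicontinuity this upgrades weak to strong $H^1(a)$ convergence, hence $W(\rho,u_{r_k})\to W(\rho,u_0)$ for a.e.\ $\rho$, constancy of $W(\cdot,u_0)$, and homogeneity of degree $1-a$. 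Make this energy-convergence step explicit (or cite Proposition \ref{p:limitsol} for the compactness/minimality part and record the strong convergence as a by-product of its proof), and the argument is complete.
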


 We state here Almgren's frequency formula \cite{a00} which we will later use when $a=0$. 
  \begin{proposition} \label{p:almgren}
   Let $u \in H^1(B_1)$ solve $\Delta u =0$. Then
    \[
     N(r) := r \frac{\int_{B_r}{|\nabla u|^2}}{\int_{\partial B_r}{ u^2}}
    \]
   is nondecreasing for $r>0$. Furthermore, $N(r)$ is constant if and only if $u$ is homogeneous of degree
   $k$ for $k \in \N$, and $N(0+):= \lim_{r \to 0} N(r)$ is the degree of homogeneity of the first
   nonzero term of $u$ given by the power series expansion about $0$. 
  \end{proposition}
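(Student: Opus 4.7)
The plan is as follows. Set $H(r) := \int_{\partial B_r} u^2 \, d\sigma$ and $D(r) := \int_{B_r} |\nabla u|^2$, so that $N(r) = rD(r)/H(r)$ and
\[
 \frac{N'(r)}{N(r)} = \frac{1}{r} + \frac{D'(r)}{D(r)} - \frac{H'(r)}{H(r)}.
\]
Everything will reduce to computing $H'$ and $D'$ and recognizing the resulting quantity as a Cauchy--Schwarz defect.

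First I would differentiate $H$ (rescaling to $\partial B_1$ to pull the radius outside the integral) and apply Green's identity together with $\Delta u = 0$ to get
\[
 H'(r) = \frac{n-1}{r} H(r) + 2 \int_{\partial B_r} u \, u_\nu \, d\sigma = \frac{n-1}{r} H(r) + 2 D(r).
\]
For $D'(r) = \int_{\partial B_r} |\nabla u|^2 \, d\sigma$, I would invoke the Rellich--Pohozaev identity (multiply $\Delta u = 0$ by $x \cdot \nabla u$ and integrate over $B_r$) to obtain
\[
 D'(r) = \frac{n-2}{r} D(r) + 2 \int_{\partial B_r} u_\nu^2 \, d\sigma.
\]
Substituting these into the log-derivative expression, the $1/r$ contributions telescope and leave
\[
 \frac{N'(r)}{N(r)} = \frac{2}{D(r) H(r)} \left( H(r) \int_{\partial B_r} u_\nu^2 \, d\sigma - D(r)^2 \right) \geq 0,
\]
where the nonnegativity is Cauchy--Schwarz applied to the identity $D(r) = \int_{\partial B_r} u \, u_\nu \, d\sigma$.

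For the equality case, if $N$ is constant on $(r_1, r_2)$, then the Cauchy--Schwarz equality forces $u_\nu = \lambda(r) u$ on every $\partial B_r$ with $r_1 < r < r_2$; integrating this ODE along each ray yields $u(x) = |x|^{\lambda} \phi(x/|x|)$, and smoothness of $u$ at the origin forces $\lambda$ to be a nonnegative integer $k$, which coincides with the constant value of $N$. For the final assertion on $N(0+)$, I would expand $u$ in its convergent series of homogeneous harmonic polynomials $u = \sum_{k \geq k_0} p_k$ (valid since harmonic functions are real-analytic near $0$), observe that $N \equiv k_0$ for $p_{k_0}$ alone, and note that the higher-order terms contribute only perturbations that vanish as $r \to 0$. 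The only genuinely technical step is the Rellich--Pohozaev identity; keeping the boundary constants straight there is the main obstacle, but it is a standard computation.
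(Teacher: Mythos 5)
Your argument is correct, and there is nothing in the paper to compare it against: the paper states this proposition as a known result (Almgren's frequency formula) and simply cites Almgren, giving no proof. What you write is the standard proof: the derivative formulas $H'(r) = \frac{n-1}{r}H(r) + 2D(r)$ (via Green's identity) and $D'(r) = \frac{n-2}{r}D(r) + 2\int_{\partial B_r} u_\nu^2$ (via the Rellich--Pohozaev identity) are exactly right, the $\frac{1}{r}$ terms cancel in $\frac{N'}{N} = \frac{1}{r} + \frac{D'}{D} - \frac{H'}{H}$, and nonnegativity is the Cauchy--Schwarz defect for $D(r) = \int_{\partial B_r} u\,u_\nu$. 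The equality analysis (Cauchy--Schwarz equality forces $u_\nu = \frac{k}{r}u$ on each sphere, hence homogeneity of degree $k = N$, with $k$ a nonnegative integer by smoothness/harmonicity at the origin) is also the standard route. Two small points you should tighten rather than leave as asides: for the claim about $N(0^+)$ and the converse of the constancy statement, the clean way is to use the $L^2(\partial B_r)$-orthogonality of homogeneous harmonic polynomials of distinct degrees, which turns $N(r)$ into the explicit ratio
\[
N(r) \;=\; \frac{\sum_{k \ge k_0} k\, c_k\, r^{2k}}{\sum_{k \ge k_0} c_k\, r^{2k}}, \qquad c_k = \int_{\partial B_1} p_k^2 \ge 0,
\]
from which both $N(0^+) = k_0$ and the fact that $N$ is constant only when a single $c_k$ is nonzero follow immediately; and you should note that $H(r) > 0$ for all $r$ unless $u \equiv 0$ (by the maximum principle and analyticity), so $N$ is well defined wherever it is used.
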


  \begin{corollary}  \label{c:almgren}
   Let $\Delta u=0$ in $B_1$ and assume $u(0)=0$ and 
    \[
     \int_{B_1}{|\nabla u|^2} = \int_{\partial B_1}{u^2}.
    \] 
   Then $u$ is homogeneous of degree $1$ and hence a linear function. 
  \end{corollary}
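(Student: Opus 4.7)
The strategy is a direct application of Almgren's frequency formula from Proposition~\ref{p:almgren}. First I would compute $N(1)$ using the hypothesis: the equation $\int_{B_1}|\nabla u|^2 = \int_{\partial B_1} u^2$ gives $N(1)=1$ immediately.

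Next I would get a lower bound on $N(0+)$. Since $u$ is harmonic in $B_1$ and $u(0)=0$, the power series expansion of $u$ about the origin has no degree-zero term, so the first nonzero homogeneous term has degree at least $1$. Proposition~\ref{p:almgren} identifies $N(0+)$ with exactly this degree, giving $N(0+)\ge 1$. Combining with monotonicity of $N$,
\[
1 \;\le\; N(0+) \;\le\; N(1) \;=\; 1,
\]
so $N$ is identically equal to $1$ on $(0,1]$.

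By the rigidity half of Proposition~\ref{p:almgren}, the constancy of $N$ forces $u$ to be homogeneous of some integer degree $k$. A short scaling computation (or just noting that for $u$ homogeneous of degree $k$ one has $\int_{B_r}|\nabla u|^2 = c_1 r^{n-2+2k}$ and $\int_{\partial B_r} u^2 = c_2 r^{n-1+2k}$ with $c_1/c_2 = k$) shows that $N(r) \equiv k$ in this case, so $k=1$. A harmonic function on $B_1$ that is homogeneous of degree $1$ is a linear combination of the coordinate functions, hence linear, which is the claim.

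There is essentially no obstacle here beyond quoting Proposition~\ref{p:almgren} correctly; the only point requiring minor care is the identification $N(0+) \ge 1$, which rests on the harmonicity of $u$ together with $u(0)=0$ so that the expansion at the origin begins at degree $\ge 1$.
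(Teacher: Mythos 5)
Your proof is correct and is exactly the argument the paper intends: the corollary is stated as a direct consequence of Proposition \ref{p:almgren} (the paper gives no separate proof), and your chain $1\le N(0+)\le N(1)=1$ via $u(0)=0$, followed by the rigidity/constancy statement and the identification of the homogeneity degree, is the standard way to deduce it. The only negligible point left implicit is the trivial case $u\equiv 0$, where $N$ is undefined but the conclusion holds anyway.
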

  
  We also have the following convergence result for minimizers. 
    
  \begin{proposition}  \label{p:limitsol}
   Let $u_k$ be a sequence of minimizers to \eqref{e:f} in $\Omega$ with
   %with $|\lambda_k| \leq C$, and 
    \[
     \int_{\Omega}{|y|^a u^2} \leq C
    \]
   Then there exists a subsequence relabeled $u_k$ such that 
   %$\lambda_k \to \lambda_0$ and  
    \[
     \begin{aligned}
      u_k \to u_0 \text{  in  } C^{0,\alpha} \text{  for  } \alpha < 2s \text{  if  } a \geq 0 \\
      u_k \rightharpoonup u_0 \text{  in  } H^1(a,K)
     \end{aligned} 
    \]
   $u_0$ is a solution to \eqref{e:f} in $K$ with $K \Subset \Omega$. 
  \end{proposition}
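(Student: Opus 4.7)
The plan follows the standard compactness-and-identification scheme for local minimizers of variational problems.

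\emph{Step 1 (uniform $H^1(a,K)$ bounds).} Fix $K \Subset \Omega$ and a cutoff $\eta \in C_0^2(\Omega)$ with $\eta \equiv 1$ on $K$. Testing \eqref{e:var2} against $\psi = u_k \eta^2$, the right-hand side $\int_{\Omega' \cap \{u_k < 0\}} u_k \eta^2$ is non-positive, so Cauchy--Schwarz and absorption yield the Caccioppoli bound $\int_K x_n^a |\nabla u_k|^2 \leq C \int_\Omega x_n^a u_k^2 |\nabla \eta|^2$. The $L^2(a,\Omega)$ hypothesis makes this uniform in $k$, so a subsequence satisfies $u_k \rightharpoonup u_0$ in $H^1(a,K)$.

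\emph{Step 2 (H\"older compactness up to the thin space).} Theorem \ref{t:optimalreg} applied on balls inside $K$ provides uniform $C^{0,1-a}$ bounds when $a \geq 0$ (respectively $C^{1,-a}$ when $a<0$) on compact subsets of $K^+ \cup K'$. Arzel\`a--Ascoli then extracts a further subsequence converging in $C^{0,\alpha}$ for every $\alpha < 2s=1-a$ when $a\geq 0$, matching the statement. In particular $u_k \to u_0$ uniformly on $K'$, so $\int_{K'} u_k^- \to \int_{K'} u_0^-$.

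\emph{Step 3 (the limit is a minimizer).} Let $v \in H^1(a,K^+)$ be any competitor with $v = u_0$ on $K \setminus A$ for some $A \Subset K$. I build competitors $v_k := \zeta v + (1-\zeta) u_k$, where $\zeta$ is smooth with $\zeta \equiv 1$ on $A$ and supported in $B$ for some $A \Subset B \Subset K$. Then $v_k = v$ on $A$ and $v_k = u_k$ near $\partial K$, so $v_k$ is admissible, and minimality of $u_k$ yields $J(u_k) \leq J(v_k)$ on $K$. Expanding $\nabla v_k = \zeta \nabla v + (1-\zeta) \nabla u_k + \nabla \zeta (v - u_k)$ and using $v = u_0$ on the support of $\nabla \zeta$, the mixed terms containing $v - u_k = u_0 - u_k$ vanish in the limit by Step 2, so $J(v_k)$ differs from $J(v)$ only by contributions of the form $\int_{K \setminus B} x_n^a |\nabla u_k|^2$.

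The main obstacle is to make this transition-region energy vanish. I would handle it via a Fubini/shrinking-shell argument: the uniform $H^1(a,K)$ bound from Step 1 ensures that the weighted energy of $u_k$ averages to a finite quantity over any one-parameter family of nested shells, so one can select $B \nearrow K$ along which $\int_{K \setminus B} x_n^a |\nabla u_k|^2 \to 0$ uniformly in $k$. Combined with weak lower semicontinuity of $v \mapsto \int x_n^a |\nabla v|^2$ for the left-hand side and the thin-space convergence of Step 2, this yields $J(u_0) \leq J(v)$ on $K$, identifying $u_0$ as the required minimizer.
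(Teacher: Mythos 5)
Your Steps 1 and 2 are essentially fine (two small points: $\psi=u_k\eta^2$ is not an admissible test function in $C^2_0$ and needs a density argument, and the sign produced by the first variation is the one in \eqref{e:sol2}, so the boundary term is $\int_{\Omega'\cap\{u_k<0\}}(-u_k)\eta^2\ge 0$ rather than non-positive; it can still be absorbed using the trace operator of Proposition \ref{p:trace} together with Young's inequality, so the Caccioppoli bound survives). The genuine gap is in Step 3, exactly at the point you flag as the main obstacle. You claim that the uniform bound $\int_K x_n^a|\nabla u_k|^2\le C$ allows you to select shells $B\nearrow K$ with $\int_{K\setminus B}x_n^a|\nabla u_k|^2\to 0$ \emph{uniformly in $k$}. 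Averaging over a one-parameter family of shells only produces, for each fixed $k$, a good shell depending on $k$; uniform-in-$k$ smallness over a fixed shell is precisely equi-integrability of the gradient energies near the cut region, and this does not follow from a uniform energy bound: the measures $x_n^a|\nabla u_k|^2\,dx$ may concentrate on ever thinner neighborhoods of $\partial K$, at locations varying with $k$, which is perfectly compatible with the weak $H^1(a,K)$ convergence of Step 1. As written, the transition-region term (and likewise the cross term $2\zeta(1-\zeta)x_n^a\langle\nabla v,\nabla u_k\rangle$, for which weak convergence alone only gives convergence, not smallness) is not controlled, so the inequality $J(u_0)\le J(v)$ is not established.

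Two standard repairs close this. (i) Pigeonhole: for a fixed competitor $v$ with $\{v\neq u_0\}\Subset A$, split a fixed annulus between $A$ and $K$ into $M$ disjoint shells; for each $k$ at least one shell carries energy of $u_k$ at most $C/M$, hence some fixed shell does so for infinitely many $k$. Pass to that further subsequence (harmless, since the desired inequality concerns only $u_0$ and $v$), support $\nabla\zeta$ in that shell, arrange in the same way that the shell carries little of $\int x_n^a|\nabla u_0|^2$ so that Cauchy--Schwarz controls the mixed term, and let $M\to\infty$. (ii) Alternatively, first upgrade to strong local convergence of gradients: compare $u_k$ with $u_k+\zeta(u_0-u_k)$, use minimality of $u_k$, the uniform convergence on the thin space from Theorem \ref{t:optimalreg}, and weak lower semicontinuity to get $\limsup_k\int \zeta\, x_n^a|\nabla u_k|^2\le \int \zeta\, x_n^a|\nabla u_0|^2$, hence $\nabla u_k\to\nabla u_0$ in $L^2(a)$ on compact subsets; after that every term in your expansion of $J(v_k)$ passes to the limit and no shell selection is needed. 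Either route, combined with weak lower semicontinuity on the left-hand side (which you do use, correctly, via $\int_B x_n^a|\nabla u_0|^2\le\liminf_k\int_B x_n^a|\nabla u_k|^2$) and the thin-space convergence of Step 2, completes the identification of $u_0$ as a minimizer of \eqref{e:f} in $K$.
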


\section{The Free Boundary}  \label{s:free}
 In this section we prove results regarding the topology of the free boundary. 
 We first note that in the proof of Proposition \ref{p:thinsol} 
 if we replace $\chi_{\{u < 0\}}$ with $\chi_{\{u \leq 0\}}$ and $\chi_{\{u \geq 0\}}$
 with $\chi_{\{u > 0\}}$, the integrals remain unchanged which allows us to conclude 
 	\[
 	  \int_{B_{1}' }{\psi \chi_{\{u < 0\}}} = \int_{B_{1}' }{\psi \chi_{\{u \leq 0\}}}.
 	\]
 It then follows that $\H^{n-1}(\{u(\cdot,0)=0\})=0$. We may then immediately conclude

 	\begin{proposition} \label{p:nosep}
 	 Let $u$ be a minimizer to \eqref{e:f}. Then $\{(x,0) \mid u(x,0)=0\}$ has 
 	 no interior point in the topology of $\R^{n-1}$ and $\mathcal{H}^{n-1}(\partial \{u(\cdot,0)>0\} \cup  \partial \{u(\cdot,0)<0\}) =0$. 
 	\end{proposition}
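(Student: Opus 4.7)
The plan is to turn the author's preceding observation into a measure-theoretic statement and then read off both claims at once. That observation is that the derivation of \eqref{e:var2} still goes through with $\chi_{\{u \leq 0\}}$ in place of $\chi_{\{u<0\}}$, because on the set $\{u(\cdot,0)=0\}$ the relevant integrands vanish in the $\epsilon \to 0$ limit. Consequently
\[
 \int_{\Omega^+} x_n^a \langle \nabla u, \nabla \psi \rangle
 = \int_{\Omega' \cap \{u<0\}} \psi
 = \int_{\Omega' \cap \{u \leq 0\}} \psi
\]
for every $\psi \in C_0^2(\Omega)$. Subtracting the two right-hand sides gives $\int_{\Omega'} \psi\,\chi_{\{u(\cdot,0)=0\}} = 0$, and the fundamental lemma of the calculus of variations forces the $(n-1)$-dimensional Lebesgue measure of $\{u(\cdot,0)=0\} \cap \Omega'$ to vanish.

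From this measure-zero fact both conclusions fall out immediately. Any nonempty open subset of $\R^{n-1}$ has positive $(n-1)$-dimensional Lebesgue measure, so $\{u(\cdot,0)=0\}$ cannot contain an interior point. For the second claim, Theorem \ref{t:optimalreg} yields continuity of $u(\cdot,0)$ on $\Omega'$, so $\{u(\cdot,0)>0\}$ and $\{u(\cdot,0)<0\}$ are open in $\R^{n-1}$ and their topological boundaries are contained in $\{u(\cdot,0)=0\}$. Since $\mathcal{H}^{n-1}$ restricted to the hyperplane $\{x_n=0\}$ coincides with $(n-1)$-dimensional Lebesgue measure, we deduce
\[
 \mathcal{H}^{n-1}\bigl( \partial\{u(\cdot,0)>0\} \cup \partial\{u(\cdot,0)<0\} \bigr) = 0.
\]

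The argument is essentially bookkeeping: the real work was done in Proposition \ref{p:thinsol} and the short paragraph preceding the statement. The one point worth verifying carefully, and the main (mild) obstacle, is that \eqref{e:var2} genuinely produces an \emph{equality}, and not only an inequality, when $\chi_{\{u<0\}}$ is replaced by $\chi_{\{u \leq 0\}}$. This holds because $(u+\epsilon\psi)^-$ is indifferent to whether one writes its defining indicator as $\chi_{\{u+\epsilon\psi<0\}}$ or $\chi_{\{u+\epsilon\psi\leq 0\}}$, so both the one-sided comparison derived from $J(u+\epsilon\psi)\geq J(u)$ and the subsequent substitution $\psi \mapsto -\psi$ that upgrades the inequality to an equality go through verbatim in either version.
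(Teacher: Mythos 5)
Your proposal is correct and follows essentially the same route as the paper: the paper's own argument (in the paragraph preceding the statement) is exactly to rerun the first-variation proof of Proposition \ref{p:thinsol} with $\chi_{\{u\le 0\}}$ and $\chi_{\{u>0\}}$ in place of $\chi_{\{u<0\}}$ and $\chi_{\{u\ge 0\}}$, conclude $\int_{\Omega'}\psi\,\chi_{\{u<0\}}=\int_{\Omega'}\psi\,\chi_{\{u\le 0\}}$ for all test functions, hence $\H^{n-1}(\{u(\cdot,0)=0\})=0$, and then read off both claims. Your added bookkeeping (fundamental lemma to get measure zero, continuity of the trace from Theorem \ref{t:optimalreg} to place $\partial\{u(\cdot,0)>0\}\cup\partial\{u(\cdot,0)<0\}$ inside the zero set) correctly fills in the steps the paper leaves implicit.
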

 	
 	%\begin{proof}
 	 %Suppose by way of contradiction that $u(x,0)=0$ for $(x,0) \in B_{\rho}'(x_0)$. Let 
 	 %$\psi \in C_0^2(B_{rho}(x_0))$. Then by Proposition \ref{p:thinsol} 
 	  %\[
 	   %- \int_{B_{\rho}^+(x_0)}{|y|^a \langle \nabla u , \nabla \psi \rangle} =0.
 	  %\]
 	 %Then $u$ is a solution of $\L_{a} u=0$ in $B_{\rho}$ by both odd and even reflection which 
 	 %is a contradiction to the power series representation of $a$-harmonic functions. 
 	%\end{proof}
 	
 We remark that Proposition \ref{p:nosep} is what one can expect for the free boundary since it is true in the 
 case $s=1$. This result is further strengthened in 
 Theorem \ref{t:thinfree} where we prove $\Gamma^+ = \Gamma^-$.
 
 We now utilize the Weiss monotonicity formula as well as Almgren's frequency function to classify so called
 blow-up solutions when $a=0$. This adds to the results in Corollary \ref{c:homogeneous}. We define
  \[
   \begin{aligned}
    S(r)&:= \left(r^{1-n}\int_{\partial B_r}{u^2} \right)^{1/2}\\
    T(r)&:= r^{1-n}\int_{B_{r}'}{u^-} 
   \end{aligned}
  \]
 For this next Proposition and its proof we evenly reflect $u$ evenly with respect to the $x_n$ variable across the thin space $\R^{n-1}\times \{0\}$.
  \begin{proposition}  \label{p:classify}
   Let $u$ be a minimizer of \eqref{e:f} with $u(x_0,0)=0$ and $a=0$ and $u$ not satisfying 
   \eqref{e:notL} at $x_0$. Then
    \[
     u_r(x) := \frac{u(rx +x_0)}{S(r)}
    \]
   is bounded in $H^1(B_1)$.
   and every limit solution $u_0$ as $r \to 0$ is a linear function in the $x'$ variable. 
  \end{proposition}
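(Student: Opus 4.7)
The plan is to combine the subharmonicity of the evenly reflected $u$ with the Weiss monotonicity formula and the Almgren rigidity of Corollary \ref{c:almgren}. First I will establish a lower bound $S(r) \geq c_0 r$ and the control $T(r) \leq C S(r)$; then I will bound Almgren's frequency $N(r) = r \int_{B_r}|\nabla u|^2 / \int_{\partial B_r} u^2$ via an algebraic rewriting of the Weiss functional; finally I will pass to a blow-up limit and invoke Corollary \ref{c:almgren}.

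After even reflection, $u$ is subharmonic on $B_1$ with $\Delta u = 2\chi_{\{u<0\}}\H^{n-1}|_{\{x_n=0\}}$. On each ball $B_r$, write $u = h_r + g_r$ where $h_r$ is harmonic with $h_r|_{\partial B_r} = u|_{\partial B_r}$ and $g_r$ vanishes on $\partial B_r$; then $g_r \leq 0$ by the maximum principle, Poisson representation gives $\sup_{B_{r/2}}|h_r| \leq C S(r)$, and Newtonian potential estimates for a bounded density on the thin space give $\sup_{B_r}|g_r| \leq C r$. Combined with Corollary \ref{c:nondegen}, this forces $S(r) \geq c_0 r$, and the analogous estimate on $u$ from below gives $\sup u^- \leq C S(r)$, hence $T(r) \leq C S(r)$. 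A direct rearrangement of Proposition \ref{p:weiss} at $a = 0$ (on the reflected $u$) yields the identity
\[
N(r) - 1 = \frac{2 r^2}{S(r)^2} \left( W(r) + \frac{2 T(r)}{r} \right),
\]
and Weiss monotonicity ($W \leq W(1)$), $T/r \leq C S/r$, together with $r/S \leq 1/c_0$ bound the right side uniformly, giving $N(r) \leq C$. Since $\int_{B_1}|\nabla u_r|^2 = N(r)$ and $\|u_r\|_{L^2(\partial B_1)} = 1$, Poincar\'e yields the $H^1(B_1)$ bound.

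Given $r_k \to 0$, extract a subsequence so that $u_{r_k} \rightharpoonup u_0$ in $H^1$, strongly in $L^2(B_1)$ and on $\partial B_1$, with $r_k/S(r_k) \to L \in [0, 1/c_0]$; then $\|u_0\|_{L^2(\partial B_1)} = 1$, $u_0(0) = 0$, and $-\Delta u_0 = 2 L\chi_{\{u_0<0\}}\H^{n-1}|_{\{x_n=0\}}$. If $L > 0$, the Weiss rescaling $v_k(x) := u(r_k x)/r_k = (S(r_k)/r_k) u_{r_k}(x)$ is bounded in $H^1$ and converges to $u_0/L$; by the scaling invariance $W(u, r_k \rho) = W(v_k^{(\rho)}, 1)$ with $v_k^{(\rho)}(x) := u(r_k\rho x)/(r_k\rho)$, together with Proposition \ref{p:weiss}, one has $W(u_0/L, \rho) = \lim_k W(u, r_k \rho) = W(u, 0^+)$ for every $\rho > 0$; constancy of $W(u_0/L, \cdot)$ forces $u_0/L$, and hence $u_0$, to be one-homogeneous. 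Evenness in $x_n$ and harmonicity off the thin space then give $u_0 = \alpha \cdot x' + b|x_n|$, while the boundary condition reads $b = L\chi_{\{\alpha \cdot x'<0\}}$, which has no nontrivial solution, contradicting $\|u_0\|_{L^2(\partial B_1)} = 1$. Hence $L = 0$, $u_0$ is harmonic on $B_1$, and the identity above gives $N(r_k) \to 1$; by weak lower semicontinuity $\int_{B_1}|\nabla u_0|^2 \leq 1 = \int_{\partial B_1}u_0^2$, while Proposition \ref{p:almgren} applied to harmonic $u_0$ with $u_0(0) = 0$ gives the reverse inequality. Thus $N_{u_0}(1) = 1$, and Corollary \ref{c:almgren} forces $u_0$ to be linear; evenness in $x_n$ eliminates its $x_n$-component, so $u_0$ is linear in $x'$ alone. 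The main obstacle is making the Almgren bound $N(r) \leq C$ genuinely uniform through the Weiss identity, and then running the Weiss-scaling argument carefully enough to rule out the case $L > 0$.
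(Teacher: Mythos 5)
Your overall architecture is the paper's (Weiss monotonicity to bound the rescaled Dirichlet energy by the sphere average, harmonicity of the limit because $r/S(r)$ is negligible, then Almgren rigidity via Corollary \ref{c:almgren} and evenness in $x_n$), and your derivation of $S(r)\geq c_0 r$ from the harmonic-plus-potential decomposition together with Corollary \ref{c:nondegen} is a clean justification of the paper's assertion $r\leq CS(r)$. But two steps have genuine gaps. First, the key inequality $T(r)\leq CS(r)$ does not follow from the estimates you state. The Poisson bound $\sup_{B_{r/2}}|h_r|\leq CS(r)$ is an interior estimate; with only $L^2(\partial B_r)$ data $h_r$ need not be bounded near $\partial B_r$, so your decomposition yields $\sup_{B_{r/2}}u^-\leq CS(r)$, i.e. $T(r)\leq CS(2r)$, not $T(r)\leq CS(r)$. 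Feeding this into your frequency identity requires passing from $S(2r)$ to $S(r)$, i.e. a doubling property of $S$, which is essentially equivalent to the frequency bound you are trying to prove, so the argument is circular as written; bounding $\int_{B_r'}|h_r|$ all the way to the edge of the thin disc by $Cr^{n-1}S(r)$ is a borderline restriction estimate for harmonic functions that you neither prove nor cite. The paper closes exactly this point differently: it proves $T(r)\leq CS(r)$ by contradiction, rescaling by $T(r)$, using Weiss monotonicity and $r\leq CS(r)$ to force the rescaled Dirichlet energy and boundary $L^2$ norm to vanish while the thin negative mass remains $1$, contradicting compactness of the trace operator (Proposition \ref{p:trace}). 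You need either that compactness argument or an actual proof of the thin restriction estimate.

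Second, your treatment of the case $L=\lim r_k/S(r_k)>0$ is not sound. The claim that a one-homogeneous limit, even in $x_n$ and harmonic off the thin space, must be of the form $\alpha\cdot x'+b|x_n|$ is false for $n\geq 3$: one-homogeneous functions harmonic in a half-space form an infinite-dimensional family, and upgrading to the asserted form using the distributional equation $-\Delta u_0=2L\chi_{\{u_0<0\}}$ on the thin space amounts to a classification of one-homogeneous solutions of the unstable thin problem that is not available (and is not provided). In addition, the homogeneity of $u_0/L$ via constancy of the Weiss energy of the limit requires $W(u,0^+)>-\infty$ and strong $H^1_{loc}$ convergence of the rescalings to pass the Dirichlet term to the limit, neither of which you establish. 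The paper never meets this case: it argues that $S(r)\leq Cr$ would, by interior regularity, force \eqref{e:notL} at $x_0$, contrary to hypothesis, hence $r/S(r)\to 0$ and every limit is harmonic. Once your first gap is repaired, the simplest fix is to rule out $L>0$ in that manner and delete the homogeneity argument altogether.
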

  
  \begin{proof}
   By translation we may assume without loss of generality that $x_0 =0$. Define
   	\[
   	 \begin{aligned}
   	  u_r &:= \frac{u(rx)}{S(r)}\\
   	  \tilde{u}_r &:= \frac{u(rx)}{T(r)}.
   	 \end{aligned}
   	\]
   By the Weiss-type monotonicity formula we have
   	\begin{equation}  \label{e:wbound}
   	 \begin{aligned}
   	  \int_{B_1}{|\nabla u_r|^2} &\leq \frac{r}{S(r)} \int_{B_{1}'}{(u_r)^-} + \frac{r^2}{S^2(r)}W(1,u) + 
   	                                 \int_{\partial B_1}{u_r^2}\\
   	                             &= \frac{rT(r)}{S^2(r)} + \frac{r^2}{S^2(r)}W(1,u) +S^2(r)
   	 \end{aligned}
   	\end{equation}
   We claim that 
   	\[
   	 T(r) \leq CS(r) \text{  for  } r<1
   	\]
   for some constant $C$ depending on $x_0$. 
   If the claim is not true then there exists $r_k \to 0$ such that
    \[
     T(r_k) > k S(r_k).
    \]
   By the Weiss type monotonicity formula we have
    \[
   	 \int_{B_1}{|\nabla \tilde{u}_r|^2} \leq \frac{r}{T(r)} + \frac{r^2}{T^2(r)}W(1,u) + 
   	 \frac{S^2(r)}{T^2(r)}.
   	\]
   Now by $C^{0,\alpha}$ regularity (Theorem \ref{t:optimalreg}) and thick nondegeneracy 
   (Corollary \ref{c:nondegen}) 
   for $r<1$ we have $r \leq C S(r)$ for some 
   constant $C$ depending on $x_0$.  Then for $r_k \to 0$
   each of the right hand terms goes to zero in the above inequality,
   so 
    \[
     \int_{B_1}{|\nabla \tilde{u}_{r_k}|^2} \to 0.
    \]
   However, 
   	\[
   	 \int_{B_{1}'}{( \tilde{u}_{r_k} )^-} = 1.
   	\]
   This is a contradiction to the compactness of the trace operator (Proposition \ref{p:trace}), 
   and so our claim is proven. Then we may 
   rewrite \eqref{e:wbound} as 
   	\begin{equation}  \label{e:wbound2}
   	 \begin{aligned}
   	 \int_{B_1}{|\nabla u_r|^2} &\leq \frac{Cr}{S(r)} + \frac{r^2}{S^2(r)}W(1,u) + S^2(r)\\
   	                            &\leq C^2 + C^2W(1,u) +S^2(r)
   	 \end{aligned}
   	\end{equation}
   This proves that $u_r$ is bounded in $H^1 (B_1)$. 
   
   If $S(r)\leq Cr$ for some constant $C$, 
   then by interior $C^{0,\alpha}$ regularity it follows that $u$ satisfies \eqref{e:notL} at $x_0$
   which is a contradiction. Then necessarily $r/S(r) \to 0$.
   We now notice from \eqref{e:wbound2} that for any subsequence $u_r \rightharpoonup u_0$ in $H^1(B_1)$  
   	\[
   	 \int_{B_1}{|\nabla u_0|^2} \leq \int_{\partial B_1}{u_0^2}.
   	\]
   Now we have for any $\psi \in C_0^1{\Omega}$
    \[
     -\int_{B_1}{\langle \nabla u_r , \nabla \psi \rangle} = 
       \frac{r}{S(r)} \int_{B_1'}{\psi \chi_{\{u_r<0\}}}
    \] 
   Since $r/S(r) \to 0$ it follows that $\Delta u_0 = 0$ in $B_1$. 
   Since $u_0(0)$ we utilize Almgren's frequency function to obtain the reverse inequality and conclude that
   	\[
   	 \int_{B_1}{|\nabla u_0|^2} = \int_{\partial B_1}{u_0^2}.
   	\]
   It follows from Corollary \ref{c:almgren} that $u_0$ is homogeneous of degree 1 in all of $B_1$, so $u_0$ is
   linear. Since $u_0(x',-x_n)=u_0(x',x_n)$, it follows that $u_0$ is independent of $x_n$.  
  \end{proof}
  
  \begin{lemma}  \label{l:u}
   Let $u$ be $a$-harmonic in  $\Omega^+$. Assume $u$ is homogeneous of degree $1-a$ and 
    \begin{equation}  \label{e:cc}
     \int_{\Omega^+}{x_n^a \langle \nabla u , \nabla \psi \rangle} = -c\int_{\Omega'} u,
    \end{equation}
   for every $\psi \in C_0^1(\Omega)$. Then $u \equiv cx_n^{1-a}/(1-a)$.
  \end{lemma}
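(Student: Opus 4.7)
The strategy is to subtract off the candidate answer and show the remainder vanishes by an even-reflection argument followed by a homogeneity-degree classification.

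First, set $w(x) := c x_n^{1-a}/(1-a)$. A direct computation gives $\mathcal{L}_a w = \partial_{x_n}(x_n^a \cdot c x_n^{-a}) = 0$, so $w$ is $a$-harmonic in $\{x_n>0\}$ and is manifestly homogeneous of degree $1-a$. Integration by parts in $x_n$ yields
\[
\int_{\Omega^+} x_n^a\,\langle \nabla w, \nabla \psi\rangle\,dx = c\int_{\Omega^+} \partial_{x_n}\psi\,dx = -c\int_{\Omega'}\psi\,dx'
\]
for every $\psi \in C_0^1(\Omega)$, matching the right-hand side of \eqref{e:cc} (read as $-c\int_{\Omega'}\psi$). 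Letting $v := u - w$, the function $v$ is $a$-harmonic in $\Omega^+$, homogeneous of degree $1-a$, and satisfies $\int_{\Omega^+} x_n^a\,\langle \nabla v, \nabla \psi\rangle\,dx = 0$ for every $\psi \in C_0^1(\Omega)$. It therefore suffices to show $v \equiv 0$.

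Second, because this identity holds for \emph{all} test functions in $C_0^1(\Omega)$ (including those that do not vanish on $\Omega'$), the even reflection $V(x',x_n) := v(x',|x_n|)$ is a weak solution of $\mathcal{L}_a V = 0$ throughout $\Omega$. A short computation (splitting the integral over $\Omega^+$ and $\Omega^-$, then changing variables $x_n \mapsto -x_n$ in the lower half) shows that testing against $\phi \in C_0^1(\Omega)$ reduces to testing $v$ against the symmetrization $\phi(x',x_n) + \phi(x',-x_n)$, and the assumed identity then gives zero. Thus $V$ is a globally $a$-harmonic function on $\Omega$ that is even in $x_n$ and homogeneous of degree $1-a$.

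The final and most delicate step is to conclude $V \equiv 0$. One clean route is via the conjugate function $\widetilde V := x_n^a\,\partial_{x_n} V$: by the standard Caffarelli--Silvestre conjugate relation, $\widetilde V$ is $(-a)$-harmonic in $\{x_n>0\}$, homogeneous of degree $-a$, and its trace on $\Omega'$ vanishes because $\partial_{x_n} V$ is odd in $x_n$. An odd reflection then produces a globally $(-a)$-harmonic homogeneous function of degree $-a$ vanishing on $\Omega'$, which by a unique continuation or Almgren-frequency argument (in the spirit of Proposition \ref{p:almgren}, applied to the $(-a)$-weighted operator) must be identically zero. Hence $\partial_{x_n} V \equiv 0$, so $V$ depends only on $x'$; it is then harmonic on $\Omega'$ and homogeneous of degree $1-a$, and since $1-a$ is not a non-negative integer for $a\in(-1,1)\setminus\{0\}$, we conclude $V \equiv 0$. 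The main obstacle is this last classification step: the subtraction and even-reflection reductions are essentially bookkeeping, but ruling out nontrivial even-in-$x_n$ $a$-harmonic homogeneous functions of degree $1-a$ requires a careful conjugate-function or spectral argument.
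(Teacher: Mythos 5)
Your first two reductions are exactly the paper's: subtract $w=cx_n^{1-a}/(1-a)$, whose weak weighted-Neumann datum is $-c\int_{\Omega'}\psi$ (you correctly read the right-hand side of \eqref{e:cc} as $-c\int_{\Omega'}\psi$ rather than the typo with $u$), and reflect the difference $v=u-w$ evenly to obtain a global weak solution of $\mathcal{L}_a$, even in $x_n$ and homogeneous of degree $1-a$. At that point the paper simply quotes the classification in \cite{a13}: the only homogeneous $a$-harmonic functions of degree $1-a$ are multiples of the odd profile $x_n|x_n|^{-a}$, so evenness forces $v\equiv 0$.

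Your substitute for that citation, the conjugate-function step, has a genuine gap. First, the homogeneity is miscomputed: if $V$ is homogeneous of degree $1-a$, then $\widetilde V=x_n^a\partial_{x_n}V$ is homogeneous of degree $(1-a)-1+a=0$, not $-a$. Second, and more seriously, the principle you invoke to conclude $\widetilde V\equiv 0$ --- ``vanishes on the thin space, hence vanishes identically by unique continuation or an Almgren-frequency argument'' --- is not a valid principle: a weighted-harmonic function can vanish on a hyperplane without vanishing (already $x_n$ does so for the Laplacian), and the function $x_n|x_n|^{a}$ is odd, globally $\mathcal{L}_{-a}$-harmonic, vanishes on $\{x_n=0\}$, and is homogeneous of degree $1+a$, which coincides with your claimed degree $-a$ exactly when $a=-1/2$; so at that parameter your asserted classification step has an explicit counterexample. (Proposition \ref{p:almgren} as stated in the paper is also only for the unweighted Laplacian.) The step can be repaired, but only by another classification/Liouville fact: with the correct degree $0$, a globally $\mathcal{L}_{-a}$-harmonic, degree-zero homogeneous function is continuous at the origin (Fabes--Kenig--Serapioni regularity) and hence constant, and oddness then gives $\widetilde V\equiv 0$, after which your final reduction ($V=V(x')$ harmonic, homogeneous of non-integer degree $1-a$, so $V\equiv0$ for $a\neq 0$) is fine. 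But that repaired ingredient is precisely the kind of classification of homogeneous solutions the paper imports from \cite{a13}, so as written your route neither avoids the citation nor goes through on its own; note also that, as your last step makes visible, the statement genuinely needs $a\neq 0$ (for $a=0$, $u=x_1+cx_n$ satisfies all hypotheses).
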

  
  \begin{proof}
   If we let $v=u - cx_n^{1-a}/(1-a)$ and reflect $v$ evenly across the thin space, then 
   \[
    \int_{\Omega} |x_n|^a \langle \nabla v, \nabla \psi \rangle =0
   \]
   for every $\psi \in C_0^1(\Omega)$ so that  $v$ is $a$-harmonic
   in all of $\Omega$. From \cite{a13}, the only $a$-harmonic functions of degree $1-a$ are up to multiplicative constant $|x_n|/x_n^a$
   which is an odd function.
   Since $v$ is also even, we conclude $v \equiv 0$.
  \end{proof}
 This next result shows that the positive and negative phases of minimizers of \eqref{e:f}
 do not separate just as in the local case \eqref{e:combust} when $s=1$. 
  \begin{theorem}  \label{t:thinfree}
   Let $u$ be a minimizer of \eqref{e:f}. Then 
    \[
     \Gamma^+(u) = \Gamma^-(u).
    \] 
  \end{theorem}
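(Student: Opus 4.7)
The plan is to establish both inclusions $\Gamma^-\subseteq\Gamma^+$ and $\Gamma^+\subseteq\Gamma^-$ by contradiction through a blow-up analysis; I describe the first in detail and indicate the dual argument for the second.

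Suppose $x_0\in\Gamma^-\setminus\Gamma^+$. Since $\{u(\cdot,0)>0\}$ is open on the thin space by continuity of $u$ (Theorem \ref{t:optimalreg}), there is a thin neighborhood $V'$ of $x_0$ on which $u(\cdot,0)\le 0$, and by Proposition \ref{p:nosep} one has $\{u(\cdot,0)<0\}\cap V'=V'$ up to a null set. Translate so that $x_0=0$. When $a<0$, further assume $\nabla u(0)=0$, since otherwise the implicit function theorem immediately places $0\in\Gamma^+\cap\Gamma^-$. I form rescalings as follows: for $a\ne 0$ (or for $a=0$ with \eqref{e:notL} in force), take $u_r(x)=u(rx)/r^{1-a}$ and extract a subsequential blow-up $u_0$, which is a homogeneous minimizer of degree $1-a$ by Corollary \ref{c:homogeneous}; for $a=0$ when \eqref{e:notL} fails, use the $S(r)$-normalized rescalings from Proposition \ref{p:classify} to obtain a limit $u_0$ that is linear and independent of $x_n$. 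Since $u_r(\cdot,0)\le 0$ on $B_1'$ for small $r$, the limit satisfies $u_0(\cdot,0)\le 0$ on $B_1'$.

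The crux is to pass to the limit in \eqref{e:sol2}. For $r$ small enough that $rB_1'\subset V'$, the set $\{u_r(\cdot,0)<0\}\cap B_1'$ equals $B_1'$ modulo a null set, so
\[
\int_{B_1^+}x_n^a\langle\nabla u_r,\nabla\psi\rangle=\int_{B_1'}\psi\qquad\forall\,\psi\in C_0^1(B_1),
\]
and weak $H^1(a)$-convergence together with the compact trace of Proposition \ref{p:trace} transfer this identity to $u_0$. In the linear-limit case the identity, combined with $u_0\le 0$ on $B_1'$, forces $u_0\equiv 0$, contradicting the normalization $\|u_0\|_{L^2(\partial B_1)}=1$ built into the $S(r)$ rescaling. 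In the homogeneous case, Lemma \ref{l:u} gives $u_0=-x_n^{1-a}/(1-a)$, which vanishes identically on the thin space; but then $u_0$, itself a minimizer by Proposition \ref{p:limitsol}, satisfies $\int_{B_1^+}x_n^a\langle\nabla u_0,\nabla\psi\rangle=\int_{\{u_0(\cdot,0)<0\}}\psi=0$, directly contradicting the identity just derived.

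For the reverse inclusion $\Gamma^+\subseteq\Gamma^-$, if $x_0\in\Gamma^+\setminus\Gamma^-$ then $u\ge 0$ on a thin neighborhood, so the same rescaling procedure yields $\{u_r(\cdot,0)<0\}\cap B_1'=\emptyset$. The blow-up $u_0$ is then $a$-harmonic in $B_1^+$ with vanishing weighted Neumann trace on $B_1'$; extending evenly across $\{x_n=0\}$, it becomes an $a$-harmonic function on $B_1$ that is homogeneous of degree $1-a$ and even in $x_n$. The classification cited in the proof of Lemma \ref{l:u} (the only such homogeneous functions are, up to a constant, odd in $x_n$) forces $u_0\equiv 0$, directly contradicting the nondegeneracy bound $\sup_{B_1^+}u_0\ge C>0$ coming from Corollary \ref{c:nondegen}. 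The main obstacle will be the bookkeeping in the $a=0$ case, where the dichotomy \eqref{e:notL}/not requires two different rescalings and the linear-blow-up scenario must be treated as a separate but parallel contradiction.
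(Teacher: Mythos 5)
Your overall strategy is exactly the paper's: blow up at a point of $\Gamma^-\setminus\Gamma^+$ (resp.\ $\Gamma^+\setminus\Gamma^-$), use Corollary \ref{c:homogeneous} (or Proposition \ref{p:classify} when $a=0$ and \eqref{e:notL} fails), identify the blow-up via Lemma \ref{l:u} and contradict minimality of the limit (Proposition \ref{p:limitsol}), or use evenness plus the classification of homogeneous $a$-harmonic functions together with nondegeneracy. The $a\neq 0$ cases are handled correctly and essentially verbatim as in the paper.

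The trouble is in the $a=0$ bookkeeping, precisely where you yourself flag the ``main obstacle.'' First, under the $S(r)$-normalization the Euler--Lagrange identity does \emph{not} transfer as you claim: rescaling \eqref{e:sol2} by $u_r=u(rx)/S(r)$ produces the factor $r/S(r)$ on the right-hand side, i.e.\ $\int_{B_1}\langle\nabla u_r,\nabla\psi\rangle=\frac{r}{S(r)}\int_{B_1'}\psi\chi_{\{u_r<0\}}$, and since \eqref{e:notL} fails one has $r/S(r)\to 0$; this is exactly how Proposition \ref{p:classify} concludes that the limit is \emph{harmonic}, so your displayed identity with right-hand side $\int_{B_1'}\psi$ is false for $u_0$, and the deduction ``the identity combined with $u_0\le 0$ forces $u_0\equiv 0$'' has no support as written. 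The correct (and simpler) contradiction, which is the paper's: $u_0$ is linear in $x'$, even in $x_n$, not identically zero (the normalization survives by the compact trace), and $u_0(0)=0$, so it changes sign on every thin ball, contradicting $u_0\le 0$ (resp.\ $\ge 0$) on $B_1'$. Second, in your reverse inclusion the appeal to the classification ``the only even homogeneous functions are trivial'' is valid only for $a\neq 0$: when $a=0$ every linear function of $x'$ is harmonic, homogeneous of degree $1$, and even in $x_n$, so $u_0\equiv 0$ does not follow from the classification alone; again one must use the sign condition $u_0\ge 0$ on $B_1'$ with $u_0(0)=0$ to kill the tangential linear part, and then nondegeneracy (Corollary \ref{c:nondegen}) gives the contradiction. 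Both slips are repairable with the one-line sign argument above, but as written those two steps fail.
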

  
  \begin{proof}
   For this proof we evenly reflect our minimizers across the thin space. We first consider
   $a \neq 0$. Let $x_0 \in \Gamma^-$. By translation we may assume $x_0=0$.  
   Suppose now by way of contradiction that $u(x,0) \leq 0$ in $B_{\rho}'$ for $\rho >0$. 
   If $a<0$ and $\nabla_x u(0)\neq 0$ we have an immediate contradiction by the $C^{1,\alpha}$
   regularity since $0$ is a local max in $B_1'$. Therefore for $a<0$ or $a>0$ we may  perform a blow-up
    \[
     u_r :=\frac{u(rx)}{r^{1-a}}
    \]
   and for a subsequence we have $u_r \to u_0$ which is homogeneous of degree $1-a$ by Corollary \ref{c:homogeneous}.
   Since $u$ satisfies \eqref{e:cc} in $B_{\rho}$, then $u_0$ also satisfies \eqref{e:cc}. Lemma \ref{l:u}
   then implies
   $u_0 = x_n^{1-a}/(1-a)$. From Proposition \ref{p:limitsol}, we also have that $u_0$ is a minimizer. But $x_n^{1-a}/(1-a)$
   is not a minimizer since it is nonnegative and hence should be $a$-harmonic across the thin space. This is a contradiction.  
  % ??Is this last statement shown anywhere??
   
   Now let $0 \in \Gamma^+$ and suppose $u(0)\geq 0$ in $B_{\rho}'$ for some $\rho>0$. Then by Proposition
   \ref{p:thinsol}, 
    \[
     \int_{B_{\rho}} |x_n|^a \langle \nabla v, \nabla \psi \rangle =0,
    \]
    so that $u$ is $a$-harmonic in $B_{\rho}$. Again we perform a blow-up and obtain
   $u_0$ (not identically zero) homogeneous of degree $1-a$ with $\L_a u_0=0$. Also 
   $u_0(x',x_n)=u_0(x',-x_n)$. This is a contradiction because the only $a$-harmonic functions of degree $1-a$ are 
   $cx_n^{1-a}$, see \cite{a13}, which is odd. 
   
   Now suppose that $a=0$. If $u$ satisfies \eqref{e:notL}, then we may proceed as before when $a \neq 0 $, and perform a blow-up to obtain a homogeneous degree-1 
   minimizer $u_0$. $u_0$ is not identically zero by Corollary \ref{c:nondegen}.

   If $u(x,0)\leq 0$ for $x \in B_{\rho}'$, then as before we may conclude from Lemma \ref{l:u} that 
   $u_0 \equiv c|x_n|$ which is not a minimizer which is a contradiction.

   If $u(x,0)\geq 0$ for $x \in B_{\rho}'$, then $u_0(x,0)\geq 0$ for $x \in B_{\rho}'$, 
   and since $u_0$ is a minimizer of \eqref{e:f}, we have $\Delta u_0 =0$ in 
   $B_1$. Since $u_0$ is homogeneous of degree 1, then $u_0$ is linear. $u_0$ is also even in the $y$ variable, and 
   $u_0(x,0) \geq 0$. Then $u_0 \equiv 0$ which is a contradiction since our blow-up was not identically zero.

   If $u$ does not satisfy \eqref{e:notL} at the origin, then we consider the rescalings
    \[
     u_r(x):=  \frac{u(rx)}{S(r)}.
    \]
   From Propostion \ref{p:classify} we may pick a subsequence $u_r \to u_0$. Since $u_0$ is linear, even in $x_n$, and not
   identically zero, 
   we obtain an
   immediate contradiction. 
  \end{proof}
 
  %\begin{remark}
  % To show that $\Gamma^+ = \Gamma^-$ we only required a nondegeneracy growth 
  % and \eqref{e:var2}. And Weiss monotonicity formula. 
  %\end{remark}
 
 We now give a nondegeneracy result for the thin space. 
 %This thin nondegeneracy result is in stark contrast to the results in \cite{ALP}. 
 %In that paper the minimizers are degenerate in the thin space. 
  \begin{proposition}  \label{p:thinnon}
   Let $u$ be a minimizer of \eqref{e:f} in $B_R$. Assume $u(0)=0$. If $a \neq 0$ 
    \[
     \sup_{B_r'} u^+ \ , \ \sup_{B_{r}'} u^- \geq Cr^{1-a} \text{ for } r<R/2
    \]
   The constant $C$ depends on $n,s$. 
   If $a =0$, then 
    \[
     \sup_{B_r'} u^+ \ , \ \sup_{B_{r}'} u^- \geq CS(r) \text{ for } r<R/2
    \]
   The constant $C$ depends on dimension $n,s$ and $\|u\|_{L^2(a,B_R)}$.
  \end{proposition}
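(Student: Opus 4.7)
The plan is to argue by contradiction, rescaling $u$ to produce a homogeneous blow-up whose thin-space trace has one sign, then ruling this out via the classification from Lemma \ref{l:u} and \cite{a13} exactly as in the proof of Theorem \ref{t:thinfree}.

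Consider first $a \neq 0$ and assume, for contradiction, that $\sup_{B_{r_k}'} u^- < \epsilon_k r_k^{1-a}$ along some sequence $r_k \downarrow 0$ with $\epsilon_k \to 0$. By Proposition \ref{p:rescale} the rescalings $u_k(x) := u(r_k x)/r_k^{1-a}$ are minimizers on $B_{R/r_k}$, uniformly bounded in the H\"older norm of Theorem \ref{t:optimalreg} and uniformly nontrivial thanks to thick nondegeneracy (Corollary \ref{c:nondegen}). Proposition \ref{p:limitsol} and Corollary \ref{c:homogeneous} then produce a subsequential limit $u_0$: a nontrivial minimizer on $\R^n$, homogeneous of degree $1-a$, with $u_0 \geq 0$ on $B_1'$ by hypothesis. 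Since $\{u_0(\cdot,0) < 0\}$ is empty in $B_1'$, \eqref{e:var2} reduces to $\int x_n^a \langle \nabla u_0, \nabla \psi\rangle = 0$ for every $\psi \in C^1_0(B_1)$, so after even reflection $u_0$ is $a$-harmonic on $B_1$. An even, homogeneous degree-$(1-a)$, $a$-harmonic function on $\R^n$ is identically zero by the classification in \cite{a13} (the only such functions are odd), contradicting thick nondegeneracy. For the case $a<0$ with $\nabla_{x'}u(0) \neq 0$ the blow-up is unnecessary: $C^{1,-a}$ regularity and $u(0)=0$ immediately yield both signs at rate $\gtrsim r \geq r^{1-a}$ within $B_r'$.

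For the $u^+$ bound, the analogous blow-up forces $u_0 \leq 0$ on $B_1'$. Proposition \ref{p:nosep} gives $\H^{n-1}(\{u_0 = 0\}) = 0$ on the thin space, so $u_0 < 0$ a.e.\ on $B_1'$, and \eqref{e:var2} becomes $\int x_n^a \langle \nabla u_0, \nabla\psi\rangle = \int_{B_1'} \psi$ for every $\psi \in C^1_0(B_1)$. Lemma \ref{l:u} then forces $u_0 = -x_n^{1-a}/(1-a)$ on $B_1^+$, whose trace on the thin space is identically $0$, contradicting $u_0 < 0$ a.e.

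When $a=0$, the same strategy works with a rescaling chosen so that the blow-up is nontrivial. If \eqref{e:notL} holds at $0$, thick nondegeneracy combined with Lipschitz growth yields $S(r) \asymp r$ and we rescale by $r_k$, invoking Corollary \ref{c:homogeneous} to obtain a degree-$1$ homogeneous, even, harmonic blow-up (which is therefore linear in $x'$); the one-sign hypothesis together with $u_0(0)=0$ forces $u_0 \equiv 0$, a contradiction. If \eqref{e:notL} fails, Proposition \ref{p:classify} provides the rescaling $u(r_k x)/S(r_k)$ and a linear-in-$x'$ blow-up with $\|u_0\|_{L^2(\partial B_1)}=1$; the same one-sign argument with $u_0(0)=0$ forces $u_0 \equiv 0$, contradicting the normalization. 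The main obstacle throughout is ensuring the blow-up is nontrivial and classified precisely enough to force the contradiction; this requires combining the correct rescaling (either $r_k^{1-a}$, $r_k$, or $S(r_k)$ according to case) with Lemma \ref{l:u} or the classification from \cite{a13}.
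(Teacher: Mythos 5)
Your argument follows the same basic skeleton as the paper's proof: negate the estimate, rescale, extract a limiting minimizer whose trace is one-signed on a thin ball, and derive a contradiction. The difference is in how the contradiction is reached. The paper finishes in essentially one line by applying Theorem \ref{t:thinfree} (with Proposition \ref{p:nosep} guaranteeing that $0$ is a genuine free boundary point of the limit), since $\Gamma^+=\Gamma^-$ is incompatible with a one-signed trace near $0$; you instead re-run the blow-up classification (Corollary \ref{c:homogeneous}, Lemma \ref{l:u}, the classification from \cite{a13}, and Proposition \ref{p:classify} when $a=0$), which in effect inlines the proof of Theorem \ref{t:thinfree} inside this proposition. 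That is legitimate and self-contained, but redundant given that the theorem is already proved at this point of the paper. Two smaller points: in your $a=0$ case with \eqref{e:notL}, the blow-up is harmonic only when $u_0\ge 0$ on the thin ball (the $u^-$ estimate); for the $u^+$ estimate you have $u_0\le 0$ there and must again go through Lemma \ref{l:u} to get $u_0\equiv -|x_n|$ and contradict nondegeneracy or Proposition \ref{p:nosep}, exactly as in your $a\neq 0$ treatment. Also, the negation of the statement for a fixed $u$ only yields radii $r_k<R/2$ with $\sup_{B_{r_k}'}u^{\pm}/r_k^{1-a}\to 0$; these $r_k$ need not tend to $0$ (if $r_k\to r_0>0$ then $u$ is one-signed on $B_{r_0}'$ and one blows up directly at $0$), so that case should be acknowledged.

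The one genuine discrepancy with the statement as claimed is the dependence of the constant. For $a\neq 0$ the proposition asserts $C=C(n,s)$, uniform over all minimizers, whereas your contradiction hypothesis fixes a single $u$ and a sequence $r_k\downarrow 0$ with $\sup_{B_{r_k}'}u^{\pm}<\epsilon_k r_k^{1-a}$. This only negates the existence of a constant depending on $u$, so a blow-up of that single $u$ at $0$ can deliver at best $C=C(u)$; moreover, in your subcase $a<0$ with $\nabla_{x'}u(0)\neq 0$ the linear lower bound is proportional to $|\nabla_{x'}u(0)|$, again $u$-dependent. The paper's proof instead takes a sequence of minimizers $u_k$ and radii $r_k$ violating the bound with constant $1/k$, rescales those, and passes to a limit via Propositions \ref{p:rescale} and \ref{p:limitsol}; it is this compactness over the family of minimizers (which also requires a uniform bound to invoke Proposition \ref{p:limitsol}) that makes the constant depend only on $n$ and $s$. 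With the contradiction set up over such a sequence $u_k$, and the cases above separated, your classification argument does close the proof.
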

 
  \begin{proof}
   We first suppose $a\neq 0$. 
   Suppose by way of contradiction that the proposition is not true. Then there exist a sequence of functions $u_k$ 
   with $r_k \leq 1$ such that 
   	\[
   	 \sup_{B_{r}'} ku_k^+ \leq r^{1-a} \text{   or   } \sup_{B_{r}'} ku_k^- \leq r^{1-a}.
   	\]
   We may extract a subsequence from Proposition \ref{p:limitsol} and Proposition \ref{p:rescale}  such that 
    \[
     u_{r_k} := \frac{u(r_k x)}{r_k^{1-a}}
    \]
   is such that $u_{r_k} \to u_0$ with $r_k \to r_0$ with possibly $r_0 = 0$. We have that $u_0$ is a minimizer with
   $u_0(x,0) \leq 0$ or $u_0(x,0) \geq 0$ in $B_{\rho}$ for $\rho <1$. This 
   contradicts Theorem \ref{t:thinfree}. 
   
   If $a=0$ we apply the same method as above to $u_r = u(rx)/S(r)$ and utilize Theorem \ref{t:thinfree}
   to arrive at a contradiction. 
  \end{proof}

\section{Singular points}   \label{s:singular}
 We begin this section by defining the singular set. For $s>1/2 \ (a<0)$, solutions to \eqref{e:f} 
 are $C^{1,-a}$. From the implicit function theorem $\Gamma \setminus \{\nabla u = 0\}$ is a $C^{1,-a}$ surfaces of co-dimension
 2.  For $s>1/2$ we define the singular set of $u$ by 
  \[
   S_u := \Gamma \cap \{\nabla u =0\}.
  \]
 We write $S$ when the function $u$ is understood. 
 
 When $s\leq1/2$ we utilize Lemma \ref{l:unique} below which shows there is a unique homogeneous 
 (of degree $2s$) solution $g(x_1,x_2)$ to \eqref{e:f}. Define $\check{g}(x_1,x_2,\ldots,x_n)=g(x_1,x_n)$. 
 We let $S_u$ consist of those points $x \in \Gamma$ such that
 if $u_0$ is any blow-up of $u$ at $x$, then $u_0$ is a rotation in the first $n-1$ variables of $\check{g}$. 
 We now classify the free boundary points in dimension two. These consist of a single point. By 
 taking a blow-up sequence we know that a blow-up is homogeneous of degree $2s=1-a$. From nondegeneracy, we know that
 there exist nontrivial 
 positive and negative phases in the blow-up. Since in dimension 2, the thin space is of dimension 1, 
 the free boundary consists of a single point. For $a=0$, it has already been shown that the blow-up is a linear 
 function. 
 
 \begin{lemma}  \label{l:unique}
  Let $a\neq0$ and $n=2$. There exists at most one not identically zero and homogeneous of degree $1-a$ solution to \eqref{e:f}. 
 \end{lemma}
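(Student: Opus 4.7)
The plan is to mimic the strategy of Lemma \ref{l:u} and of the proof of Theorem \ref{t:thinfree}, reducing uniqueness to the classification from \cite{a13} of $a$-harmonic, homogeneous functions of degree $1-a$. The reason this should work cleanly in $n=2$ is that homogeneity essentially determines the sign pattern on the one-dimensional thin space, so two such solutions can be lined up and then subtracted.

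First I would pin down the sign structure on the thin space. For any nontrivial homogeneous-$(1-a)$ solution $u$, the restriction $u(\,\cdot\,,0)$ is a constant multiple of $|x_1|^{1-a}$ on each of the two rays $\{x_1>0\}$ and $\{x_1<0\}$. By Theorem \ref{t:thinfree} both signs must be present, and by Proposition \ref{p:nosep} neither ray can lie entirely in the zero set, so on one ray $u>0$ and on the other $u<0$.

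Given two such solutions $u_1,u_2$, after applying the reflection $x_1\mapsto -x_1$ (which is an invariance of \eqref{e:sol2}) to one of them if necessary, I may assume $u_1$ and $u_2$ share the same positive and negative rays on the thin space. Then by \eqref{e:sol2} applied to each, the difference $v:=u_1-u_2$ satisfies
\[
\int_{\Omega^+} x_n^a \langle \nabla v, \nabla \psi \rangle = \int_{\Omega'} \psi\,\bigl(\chi_{\{u_2<0\}} - \chi_{\{u_1<0\}}\bigr) = 0
\]
for every $\psi \in C_0^1(\Omega)$, since the two indicators agree $\H^1$-almost everywhere by Proposition \ref{p:nosep}. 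After even reflection across the thin space, $v$ is $a$-harmonic on all of $\Omega$, even in $x_n$, and still homogeneous of degree $1-a$. Invoking \cite{a13} exactly as in Lemma \ref{l:u} and the proof of Theorem \ref{t:thinfree}, the only $a$-harmonic functions of degree $1-a$ are proportional to the odd function $\mathrm{sgn}(x_n)|x_n|^{1-a}$; evenness of $v$ then forces $v\equiv 0$, so $u_1=u_2$.

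The main delicate point will be the reflection normalization, which strictly speaking yields uniqueness only up to $x_1\mapsto -x_1$. This is consistent with the subsequent definition of the singular set in terms of rotations in the first $n-1$ variables, so no real loss is incurred for the applications of the lemma.
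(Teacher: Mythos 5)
Your proposal is correct, and its core mechanism is the same as the paper's: subtract two homogeneous solutions whose negative phases coincide on the thin line, reflect evenly in $x_n$, and kill the resulting even, homogeneous of degree $1-a$, $a$-harmonic difference using the classification from \cite{a13}, exactly as in Lemma \ref{l:u}. Where you genuinely differ is the treatment of the two possible orientations. The paper disposes of this in one line by asserting that either $u+v$ or $u-v$ is $a$-harmonic and concluding $u\equiv v$ or $u\equiv -v$; but since \eqref{e:f} penalizes only the negative part, negation is not a symmetry of the problem, and when the negative rays of the two solutions are complementary the weighted Neumann data of $u+v$ is $1$ a.e.\ rather than $0$, so that branch really requires Lemma \ref{l:u} and yields $u+v$ equal to a multiple of $x_n^{1-a}$, which agrees with $u\equiv -v$ only on the thin space. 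Your normalization by the reflection $x_1\mapsto -x_1$, which genuinely is an invariance of \eqref{e:sol2}, sidesteps this issue and produces the accurate form of the statement --- uniqueness up to reflection of the thin variable --- which is precisely how the lemma is used later, when the singular set is defined through rotations of $\check g$ in the first $n-1$ variables. One small caveat: your sign-structure step invokes Theorem \ref{t:thinfree} and Proposition \ref{p:nosep}, which are stated for minimizers; this is harmless here because the homogeneous solutions to which the lemma is applied are blow-ups of minimizers and hence minimizers themselves, and the same both-phases fact could alternatively be extracted from Lemma \ref{l:u} together with \cite{a13} without appealing to minimality.
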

 
 \begin{proof}
  Suppose $u,v$ are two solutions. Then either $u +v$ or $u-v$ is $a$-harmonic. The only $a$-harmonic functions
  that are even and homogeneous of degree $1-a$ are identically zero \cite{a13}.
  So $u \equiv v$ or $u \equiv -v$. 
 \end{proof}
 
 \begin{lemma}   \label{l:2zero}
  Let $a<0$ and $n=2$. The only homogeneous of degree $1-a$ solution to \eqref{e:f} is the identically zero solution.
 \end{lemma}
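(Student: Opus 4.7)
The plan is to use Lemma~\ref{l:unique} to force the candidate $u$ to be odd in the tangential variable $x_1$, and then to derive a direct contradiction from the thin-space boundary relation \eqref{e:fsol}.

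First I would pin down the sign structure of $u$ on the thin space. By Theorem~\ref{t:thinfree} we have $\Gamma^+ = \Gamma^-$, and combined with the nondegeneracy of Proposition~\ref{p:thinnon} and $n=2$ the common thin free boundary reduces to the origin, with each of the open sets $\{u(\cdot,0) > 0\}$, $\{u(\cdot,0) < 0\}$ a half-line by homogeneity. Reflecting in $x_1$ if necessary, we may assume $u(x_1, 0) > 0$ on $\{x_1 > 0\}$ and $u(x_1, 0) < 0$ on $\{x_1 < 0\}$.

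Second, I would play the reflected function $\tilde u(x_1, x_2) := u(-x_1, x_2)$ off against Lemma~\ref{l:unique}. Since the functional \eqref{e:f} and the weak formulation \eqref{e:var2} are both invariant under the reflection $x_1 \mapsto -x_1$, $\tilde u$ is again a nontrivial homogeneous-of-degree-$(1-a)$ solution. Lemma~\ref{l:unique} (which applies as $a \neq 0$) then yields $\tilde u = \pm u$; the reversed sign pattern of $\tilde u(\cdot,0)$ excludes $\tilde u = u$, forcing $u(-x_1, x_2) = -u(x_1, x_2)$, i.e. oddness in $x_1$.

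The decisive step is to clash this oddness with the free boundary condition. Differentiating the oddness identity in $x_2$ gives $u_{x_2}(-x_1, x_2) = -u_{x_2}(x_1, x_2)$ for $x_2 > 0$, so
\[
\lim_{x_2 \to 0^+} x_2^a u_{x_2}(-x_1, x_2) = -\lim_{x_2 \to 0^+} x_2^a u_{x_2}(x_1, x_2).
\]
By \eqref{e:var2}/\eqref{e:fsol}, this limit equals $0$ on $\{u(\cdot,0) > 0\}$ and a fixed nonzero constant on $\{u(\cdot,0) < 0\}$. For any $x_1 > 0$ the left-hand side is the nonzero constant while the right-hand side is $0$, a contradiction; hence $u \equiv 0$.

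The point I expect to be most delicate is the symmetry step: verifying that the reflected function $\tilde u$ really qualifies as the same type of \emph{solution} as $u$ so that Lemma~\ref{l:unique} applies to the pair, and that the assumed half-line sign configuration on the thin space genuinely rules out the alternative $\tilde u = u$.
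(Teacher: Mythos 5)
Your steps 1 and 3 are individually fine, but the pivotal step 2 --- extracting oddness from Lemma \ref{l:unique} --- is where the argument breaks. The reflected function $\tilde u(x_1,x_2)=u(-x_1,x_2)$ has its negativity ray opposite to that of $u$, and in that configuration the dichotomy $\tilde u=\pm u$ is not what the underlying $a$-harmonic classification can deliver: $-u$ is never a solution of \eqref{e:sol2} when $u$ has both phases (its weighted Neumann datum has the wrong sign relative to its own negativity set), and $u-\tilde u$ is not $a$-harmonic either, since the data $\chi_{\{u<0\}}-\chi_{\{\tilde u<0\}}$ do not cancel on opposite rays. What one actually gets, arguing exactly as in Lemma \ref{l:u} (as in the proof of Theorem \ref{t:thinfree}), is that $u+\tilde u$ has constant weighted datum $1$ on the whole thin line, hence $u+\tilde u\equiv x_2^{1-a}/(1-a)$. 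That relation sails through your step 3 unscathed: differentiating it gives $\lim_{x_2\to0^+}x_2^a\partial_{x_2}u(-x_1,x_2)=1-\lim_{x_2\to0^+}x_2^a\partial_{x_2}u(x_1,x_2)$, i.e.\ $1=1-0$ at a point of the positive ray --- no contradiction. So your argument only rules out the alternative $\tilde u=-u$, which was impossible for trivial sign reasons anyway, and never excludes the genuine candidate: a homogeneous profile positive on one ray with vanishing weighted normal derivative and negative on the other with datum $1$. Reading Lemma \ref{l:unique} verbatim would formally close the proof, but that reading cannot be the one available: for $a\ge0$ the paper itself posits a nontrivial homogeneous solution $g$, and its $x_1$-reflection is a second solution distinct from $\pm g$, so the uniqueness can hold at best up to reflection --- which is exactly useless for your pair $(u,\tilde u)$. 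In substance the entire difficulty of Lemma \ref{l:2zero} has been relocated into Lemma \ref{l:unique} at precisely the point where its cancellation argument does not apply.

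The paper's proof does not route through Lemma \ref{l:unique} at all: it writes $u=r^{1-a}f(\theta)$, derives the ODE $f''(\theta)-a\tan(\theta)^{-1}\cdot(\dots)+(1-a)f(\theta)=0$ satisfied off the thin space, normalizes with a multiple of $x_2^{1-a}$, and observes that the profile compatible with the forced boundary conditions is the homogeneous solution of the \emph{stable} thin obstacle problem of \cite{ALP}, whose sign structure is incompatible with the unstable datum $\chi_{\{u<0\}}$; hence no nontrivial homogeneous solution exists for $a<0$. Some such direct classification of the two-ray profile (ODE analysis, or an explicit computation of the $1-a$ homogeneous $a$-harmonic functions on a half-line slit) is the ingredient your proposal is missing.
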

 
 \begin{proof}
  If $u=r^{2s}f(\theta)$, then since $\L_{a} u=0$ for $y>0$ we have 
   \[
    f''(\theta) -a \frac{\sin \theta}{\cos \theta} + (1-a)f(\theta) =0.
   \]
  By adding or subtracting $cy^{1-a}$ we may assume $f(\pi/2)=0$. The solution to the above equation 
  is not a solution to \eqref{e:sol2} rather it is a solution to the stable fractional obstacle problem \cite{ALP}.  
 \end{proof}
 
 In Section \ref{s:stable} we will construct a solution with a singular point at the origin. 
 %We now construct an example of a function satisfying \eqref{e:sol2} with a singular point of the free boundary
 %at the origin. For simplicity we work in $\R^3$. We define
 % \[
 %  \mathcal{B}:= B_1 \cap \{x_1,x_2,x_3 \geq 0\}. 
 % \]
 %Let $w$ be the minimizer of the functional
 % \[
 %  \int_{\mathcal{B}}|x_3|^a |\nabla v|^2 - 2\int_{\mathcal{B}'}v^+
 % \] 
 %where each $w=0$ on $\partial \mathcal{B}$. It is clear that 
 % \[
 %  \lim_{x_3 \to 0} x_3^a \frac{\partial v}{\partial x_3}(x',x_3) = - 1/2.
 % \]
 %We may reflect $v$ in an odd manner across the $(0,x_2,x_3)$ and $(x_1,0,x_3)$ planes. Then $v+1/2 |x_3|^1-a$ 
 %satisfies \eqref{e:sol2} and the free boundary is $\Gamma = \{(x_1,x_2,0) \mid |x_1|=|x_2|\}$ with the origin
 %a singular point. 
 In \cite{MW07} it is shown that minimizers with $s=1$ have no singular points. For $s<1$ it is not clear
 if the same result is true. However, for $s>1/2$ we prove 
 Theorem \ref{t:haus} which gives a bound on the Hausdorff dimension of the singular set. To prove Theorem \ref{t:haus}
 we only utilize that minimizers of \eqref{e:f} are solutions to \eqref{e:sol2}, have a nondegenerate growth at free
 boundary points, and that blow-ups are homogeneous of degree $1-a$. This matches the result in \cite{MW07} for $s=1$.
 For $s \leq 1/2$, we have Lemma \ref{d:d2}
 which is a partial result for dimension $n=3$. 
 
 %The nondegeneracy condition we require is 
 % \begin{equation}  \label{e:ndeg}
 %  \inf_{B_{r}'(x_0)} u \leq -c_{n,a} r^{1-a}, 
 % \end{equation}
 %where $c_{n,a}$ is a fixed constant depending on dimension and $a$. 

 The next two Lemmas are standard in the literature and we omit the proofs. 
This next Lemma is analogous to [Proposition 9.6 in \cite{g84}].
 
 \begin{lemma}   \label{l:dreduct}
  Let $u$ be a minimizer of \eqref{e:f}.  Assume also $u$ is homogeneous of degree $1-a$ 
  in dimension $n$. Let $x_0 \in \partial B_1 \cap \Gamma$. Let $u_0$ be any blow-up of $u$ at $x_0$. 
  If $\nu$ is the direction in $x_0$, then 
   \begin{equation}  \label{e:equal}
    u_0(x+t\nu)=u_0(x) \text{  for any  } x \in \R^n \text{  and  } t\in \R. 
   \end{equation}
 \end{lemma}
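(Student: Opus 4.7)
The plan is to run the classical Federer--Giusti dimension-reduction argument (in the spirit of Proposition~9.6 in Giusti's book), converting the homogeneity of $u$ into translational invariance of any blow-up at $x_0$ in the radial direction. Since $x_0 \in \partial B_1$, one has $\nu = x_0/|x_0| = x_0$. Write $u_{r_k}(x) := u(x_0 + r_k x)/r_k^{1-a}$ for a blow-up sequence converging locally uniformly to the prescribed $u_0$.

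The entire argument hinges on one scaling identity. Fix $t \in \R$ and take $k$ large enough that $1 + r_k t > 0$; set $r_k' := r_k/(1+r_k t)$. Using that $u$ is homogeneous of degree $1-a$,
\begin{equation*}
 u_{r_k}(x + t\nu) = \frac{u\bigl((1+r_k t)x_0 + r_k x\bigr)}{r_k^{1-a}} = \frac{(1+r_k t)^{1-a}}{r_k^{1-a}}\, u\!\Bigl(x_0 + \tfrac{r_k}{1+r_k t}\,x\Bigr) = u_{r_k'}(x).
\end{equation*}
The left-hand side converges to $u_0(x + t\nu)$ locally uniformly. For the right-hand side, note $r_k' \to 0$ and $r_k'/r_k \to 1$; writing $\epsilon_k := r_k'/r_k - 1$ I rewrite $u_{r_k'}(x) = (1+\epsilon_k)^{-(1-a)}\, u_{r_k}\bigl((1+\epsilon_k)x\bigr)$, which converges locally uniformly to $u_0(x)$ by continuity of $u_0$ and the locally uniform convergence $u_{r_k} \to u_0$. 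Identifying the two limits yields $u_0(x+t\nu) = u_0(x)$.

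The only delicate point---and the only substantive content beyond the scaling computation---is the verification that the two rescaling sequences $\{u_{r_k}\}$ and $\{u_{r_k'}\}$ produce the same blow-up limit; this is what the last step above handles, leveraging locally uniform convergence together with continuity of $u_0$. The required mode of convergence is ensured in the regime $s>1/2$ by the $C^{1,-a}$ estimate of Theorem~\ref{t:optimalreg}, and in the remaining regimes by the H\"older estimate of Theorem~\ref{t:optimalreg} combined with Proposition~\ref{p:limitsol}. In the $a = 0$ setting one uses blow-ups normalized by $S(r)$; however, $u$ being homogeneous of degree $1$ forces $S(r)$ to be a constant multiple of $r$, so the scaling identity above is recovered verbatim and the proof proceeds identically.
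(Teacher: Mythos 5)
Your main argument is correct, and it is exactly the standard scaling computation behind the result the paper simply cites without proof (the analogue of Proposition 9.6 in Giusti): the identity $u_{r_k}(x+t\nu)=u_{r_k'}(x)$ with $r_k'=r_k/(1+r_kt)$, together with the observation that $u_{r_k'}$ has the same locally uniform limit as $u_{r_k}$ because $r_k'/r_k\to 1$, is the whole content, and your handling of the "same limit" step via $(1+\epsilon_k)^{-(1-a)}u_{r_k}((1+\epsilon_k)x)$ is the right way to make it rigorous. The only imprecise point is the closing aside about $a=0$ with $S(r)$-normalized blow-ups: homogeneity of $u$ is about the origin while $S(r)$ is computed on spheres centered at $x_0$, so homogeneity of degree $1$ does \emph{not} force $S(r)$ to be a constant multiple of $r$ (if the trace of $u$ on $\partial B_1$ is merely H\"older at $x_0$, one can have $S(r)\gg r$), and matching the two normalizations $S(r_k')/S(r_k)\to 1$ would need a separate argument. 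This caveat is harmless for the paper, since the lemma is only invoked for $s>1/2$ (and, more generally, for blow-ups normalized by $r^{1-a}$), where your proof is complete.
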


  \begin{lemma} \label{l:versus}
   Assume $\nu \in \partial B_{1}'$ and 
    \[
     \frac{\partial u_0}{\partial \nu}(x)=0 \text{  for any  } x \in B_1(0).
    \]
   Then $u$ is a minimizer of \eqref{e:f} in $B_1$ if and only if $u$ is a minimizer of \eqref{e:f} in the
   co-dimension 1 set $\Pi_{\nu} \cap B_1$, where $\Pi_{\nu}$ is any plane orthogonal to the direction $\nu$.   
  \end{lemma}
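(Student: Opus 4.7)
The plan is to set up a Fubini decomposition of $J_a$ in the $\nu$-direction and then treat the two implications separately. Working in orthonormal coordinates $x = (t, y)$ with $t = x \cdot \nu$ and $y \in \Pi_\nu$, the hypothesis $\partial_\nu u = 0$ in $B_1$ gives $u(t, y) = u(y)$, and slicing the energy integrals over $t$ yields the identity
\begin{equation*}
 J_a(w; B_1) = \int_{-1}^{1} \Bigl[ J_a^{(n-1)}\bigl(w(t, \cdot);\, \Pi_\nu \cap B_{\sqrt{1 - t^2}}\bigr) + \int_{(\Pi_\nu \cap B_{\sqrt{1 - t^2}})^+} (\partial_t w)^2 x_n^a \, dy \Bigr]\, dt,
\end{equation*}
where $J_a^{(n-1)}$ denotes the obvious $(n-1)$-dimensional version of \eqref{e:f} on a ball in $\Pi_\nu$ with thin space $\Pi_\nu \cap \{x_n = 0\}$.

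I would prove the easier direction first. Suppose $u|_{\Pi_\nu}$ minimizes $J_a^{(n-1)}$ on $\Pi_\nu \cap B_1$; by a standard restriction argument, it then minimizes on every concentric sub-ball $\Pi_\nu \cap B_r$ with $r \leq 1$ and its own boundary data. Given any competitor $w$ for $u$ in $B_1$, the trace of $w(t, \cdot)$ on $\partial B_{\sqrt{1-t^2}} \cap \Pi_\nu$ matches $u$ because this boundary is contained in $\partial B_1$. Slice by slice, $J_a^{(n-1)}(w(t, \cdot)) \geq J_a^{(n-1)}(u)$, the $(\partial_t w)^2 x_n^a$ integrand is nonnegative, and $\partial_t u = 0$, so the Fubini identity immediately gives $J_a(w; B_1) \geq J_a(u; B_1)$.

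For the harder direction I would argue by contradiction using a cylindrical cutoff construction. If $u|_{\Pi_\nu}$ fails to minimize, density and restriction produce a $\psi$ with $\operatorname{supp} \psi \Subset \Pi_\nu \cap B_{r_0}$ for some $r_0 < 1$ and $F_0 := J_a^{(n-1)}(u + \psi) - J_a^{(n-1)}(u) < 0$. For each $L > 1$, set $C_L := (-L, L) \times (\Pi_\nu \cap B_{r_0})$ and define the competitor $\tilde v(t, y) := u(y) + \eta_L(t) \psi(y)$, with $\eta_L$ a fixed smooth cutoff equal to $1$ on $[-L+1, L-1]$ and supported in $(-L, L)$. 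Using $\partial_t u = 0$ together with the expansion $|\nabla \tilde v|^2 = (\eta_L'(t))^2 \psi^2 + |\nabla_y u + \eta_L(t) \nabla_y \psi|^2$ gives
\begin{equation*}
 J_a(\tilde v; C_L) - J_a(u; C_L) = \int_{-L}^{L} F(\eta_L(t))\, dt + \|\psi\|^2_{L^2(a,\,\Pi_\nu^+)} \int_{-L}^{L} (\eta_L'(t))^2\, dt,
\end{equation*}
where $F(\lambda) := J_a^{(n-1)}(u + \lambda \psi) - J_a^{(n-1)}(u)$ is continuous on $[0, 1]$ with $F(0) = 0$ and $F(1) = F_0 < 0$. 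The first term equals $(2L - 2) F_0 + O(1)$ and the second is $O(1)$ independently of $L$, so pushing $L \to \infty$ makes the left-hand side diverge to $-\infty$, contradicting the minimality of $u$ on $C_L$.

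The principal obstacle is that this forward argument requires minimality on cylinders $C_L$ of unbounded height, not merely on $B_1$. In the setting where the lemma is applied in Section \ref{s:singular}, $u$ plays the role of a blow-up and is homogeneous of degree $1 - a$, so by Proposition \ref{p:rescale} it minimizes $J_a$ on every bounded domain of $\R^n$; restriction then delivers minimality on each $C_L$ and closes the argument. More generally, it suffices that $u$ extend to a global minimizer of \eqref{e:f}, which is precisely the context in which this dimensional reduction is used.
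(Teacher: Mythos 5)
The paper never actually writes a proof of Lemma \ref{l:versus}: both dimension-reduction lemmas are declared standard and the reader is pointed to the analogue in Proposition 9.6 of \cite{g84}, so your write-up is a reconstruction of exactly the classical argument behind that citation, and both halves are computed correctly. Since $\nu$ lies in the thin space, the weight $x_n^a$ and the thin integral do split under your Fubini decomposition, and the direction ``$u|_{\Pi_\nu}$ minimizes $\Rightarrow$ $u$ minimizes in $B_1$'' goes through using only minimality on the concentric thin sub-balls, exactly as you say; the cylinder competitor $u(y)+\eta_L(t)\psi(y)$ with $\int_{-L}^{L}F(\eta_L)\leq (2L-2)F_0+O(1)$ is the standard proof of the converse. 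Your closing caveat is the correct diagnosis rather than a flaw in your argument: with minimality assumed only on $B_1$ the converse cannot be closed by this construction, and this is consistent with the cited Giusti proposition being stated for global minimizers. In the only places the lemma is invoked (Lemma \ref{d:d2} and Theorem \ref{t:haus}) the function is a homogeneous blow-up, so Proposition \ref{p:rescale} together with restriction gives minimality on every bounded set, and $\partial_\nu u=0$ holds globally because this derivative is homogeneous of degree $-a$; hence your patch applies verbatim where the lemma is used. Two small points worth recording if this is written out in full: slicewise admissibility of $w(t,\cdot)$ (trace agreeing with $u$ on the slice boundary) holds only for a.e.\ $t$ and is cleanest when phrased for perturbations $u+\phi$ with $\phi$ compactly supported in $B_1$, and the $O(1)$ bound over the transition intervals uses continuity of $\lambda\mapsto J_a^{(n-1)}(u+\lambda\psi)$ on $[0,1]$, which deserves a sentence since the thin term involves $(u+\lambda\psi)^-$.
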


 %The above Lemma implies that in dimension 2 and $s>1/2$, the gradient is nonzero at the free boundary point. 
 
 \begin{lemma} \label{d:d2}
  Let $n=3$ and $s>1/2$, and let $u$ be a minimizer of \eqref{e:f} in $\Omega$. For each compact $K \Subset \Omega$, 
  the singular set $K \cap \Gamma \cap \{\nabla_{x'} u =0\}$ contains at most finitely many points. 
 \end{lemma}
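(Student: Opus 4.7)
The plan follows a Federer-style dimension reduction. Suppose for contradiction that $K \cap \Gamma \cap \{\nabla_{x'} u = 0\}$ contains infinitely many points. Since $u \in C^{1,-a}$ by Theorem \ref{t:optimalreg} and $\Gamma$ is closed, the singular set is closed, so compactness furnishes a singular accumulation point $x_0$, which after translation I take to be the origin. Pick singular $x_k \to 0$, set $r_k := |x_k|$ and $y_k := x_k/r_k$, and extract a subsequence so that $y_k \to y_0 \in \partial B_1$ lies in the thin space. I then perform the blow-up $u_{r_k}(x) = u(r_k x)/r_k^{1-a}$: these are minimizers by Proposition \ref{p:rescale}, and thanks to Theorem \ref{t:optimalreg} together with the natural $L^2(a)$ scaling of $u$ at a free boundary point they are uniformly bounded in $C^{1,-a}$ on fixed balls. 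Along a further subsequence, $u_{r_k} \to u_0$ in $C^{1,\beta}_{\mathrm{loc}}$ for $\beta<-a$, with $u_0$ a homogeneous degree $1-a$ minimizer by Corollary \ref{c:homogeneous}. To promote $y_0$ to a singular point of $u_0$, note that $u_{r_k}(y_k) = 0$ and $\nabla_{x'} u_{r_k}(y_k) = r_k^a \nabla_{x'} u(x_k) = 0$; both pass to the limit via the $C^{1,\beta}$ convergence, and $y_0 \in \Gamma(u_0)$ follows by transporting the thin-space nondegeneracy of Proposition \ref{p:thinnon} from each $u_{r_k}$ at $y_k$ to $u_0$ at $y_0$.

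Next I blow up once more, this time at $y_0$. By Lemma \ref{l:dreduct} the resulting blow-up $\tilde u_0$ is invariant under translations in the direction $y_0$, and by Corollary \ref{c:homogeneous} it is itself homogeneous of degree $1-a$. Since $y_0$ lies in the thin space, Lemma \ref{l:versus} applies with $\nu = y_0$: $\tilde u_0$ descends to a minimizer of \eqref{e:f} on the slice $y_0^\perp \cap B_1$, which has ambient dimension $n - 1 = 2$ and one-dimensional thin space, and remains homogeneous of degree $1-a$. Because $a < 0$, Lemma \ref{l:2zero} forces this slice, and hence $\tilde u_0$ itself, to vanish identically. But $0$ is a free boundary point of $\tilde u_0$, so Proposition \ref{p:thinnon} gives $\sup_{B_r'} \tilde u_0^\pm \geq C r^{1-a} > 0$, the desired contradiction.

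The main obstacle is the bookkeeping in the first blow-up, namely passing both the singular-point condition $\nabla_{x'} u(x_k) = 0$ and the free-boundary condition $x_k \in \Gamma$ to the limit simultaneously. The former requires genuine $C^{1,\beta}$ convergence, which is available precisely because the hypothesis $s > 1/2$ makes $u$ of class $C^{1,-a}$ via Theorem \ref{t:optimalreg}; the latter rests on Proposition \ref{p:thinnon} to rule out the degenerate possibility that $u_0$ vanishes identically on a thin-space neighborhood of $y_0$, which would otherwise destroy the reduction to Lemma \ref{l:2zero}.
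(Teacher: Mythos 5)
Your argument is correct and follows essentially the same route as the paper's proof: a first blow-up at a singular accumulation point with scales $r_k=|x_k-x_0|$, passage of the conditions $u=0$ and $\nabla_{x'}u=0$ to the limit by $C^{1,\beta}$ convergence together with the thin-space nondegeneracy of Proposition \ref{p:thinnon}, a second blow-up at the limiting singular point on $\partial B_1'$, and then Lemmas \ref{l:dreduct} and \ref{l:versus} to reduce to a two-dimensional homogeneous minimizer, which Lemma \ref{l:2zero} forces to vanish, contradicting nondegeneracy. Your write-up merely makes explicit some steps (closedness of the singular set, the gradient scaling, and how $y_0\in\Gamma(u_0)$ is preserved) that the paper leaves implicit.
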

 
 \begin{proof}
  We first assume that $s>1/2$. 
  Suppose that the result is not true. Then there exists a sequence of points $x_k \to x_0$ with
  $\{x_k , x_0\} \in S$. We take a blow-up of $u$ with a fixed center $x_0$,
  and with rescalings $r_k = |x_k - x_0|$.  
  By the $C^{1,\alpha}$ convergence, we obtain a blow-up $u_0$ which is homogeneous of degree $2s=1-a$, and 
  $\partial B_{1}'(0)$ contains a point $\zeta \in S_{u_0}$. We may then take a blow-up 
  of $u_0$ at $\zeta$ to obtain $u_{00}$. Since $\nabla u_0(\zeta)=0$, $u_{00}$ is homogeneous of degree $2s=1-a$. 
  Furthermore, by Lemmas \ref{l:dreduct} and \ref{l:versus}, $u_{00}$ will be a minimizer in every compact set of
  the two-dimensional plane $\R \times \{0\} \times \R$. By Lemma \ref{l:2zero} $u_{00}$ is identically zero, but
  this is a contradiction to the nondegeneracy. 
  
  %We now assume $s=1/2$ and again suppose there exists $x_k \to x_0$ with
  %$\{x_k , x_0\} \in S$. We take a blow-up of $u$ with a fixed center $x_0$,
 % and with rescalings $r_k = |x_k - x_0|$. There exists $\zeta \in \partial B_1$ such that 
  % \[
  %  z_k:=\frac{x_k - x_0}{|x_k - x_0|} \to \zeta.
  % \]
  %If we take a blow-up of $u_0$ at $\zeta$ to obtain $u_{00}$, by Lemma \ref{l:dreduct} must be constant in 
 % the direction $\zeta$ and a minimizer of \eqref{e:f} on every plane orthogonal to the direction $\zeta$. The only
 % two dimensional solution is the linear solution, so $\zeta \notin S_{u_0}$. Since $\Gamma_{u_0}$ is 
 % flat in a neighborhood of $\zeta$ we may apply Remark \ref{r:close} to $u_{r_k} \to u_0$ and conclude that 
 % in a neighborhood of $\zeta$ each $\Gamma_{u_{r_k}}$ is a Lipschitz graph. By Lemma
 % \ref{l:flat} this implies that for $k$ large enough $z_k \notin S_{u_{r_k}}$, 
 % so for $k$ large enough $x_k \notin S_{u}$, a contradiction. 
 \end{proof}

%Note: I need a blow-up lemma that says my blow-ups are always nonzero. 
%Note: I also need to differentiate between n and N for dimension. 
 As in \cite{MW07}, we now employ the standard dimension reduction argument of Federer to prove the following
 \begin{theorem}  \label{t:haus}
  Let $u$ be a minimizer of \eqref{e:f} with $s>1/2$. 
  The Hausdorff dimension of the singular set of the free boundary
  $S_{u}$ is less than or equal to $n-3$. 
 \end{theorem}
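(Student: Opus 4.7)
The plan is to apply the classical Federer dimension reduction argument, following the template in \cite{MW07} for the local case $s=1$, and using as input the three facts already established: blow-ups exist and are homogeneous of degree $1-a$ (Corollary \ref{c:homogeneous}); such blow-ups are themselves minimizers (Proposition \ref{p:limitsol}); and the singular set behaves well under the translation/restriction operations of Lemmas \ref{l:dreduct} and \ref{l:versus}. The bottom of the induction is supplied by Lemma \ref{l:2zero}: in the two-dimensional thin geometry (in particular in any slice $\R\times\{0\}\times\R$) the only homogeneous degree $1-a$ minimizer is identically zero, which together with nondegeneracy means no singular point exists in dimension two.

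First I would verify that $S_u$ is relatively closed in $\Omega$: if $x_k\to x_0$ with $x_k\in S_u$, then by the $C^{1,-a}$ convergence from Theorem \ref{t:optimalreg} one has $\nabla u(x_0)=0$, while nondegeneracy (Proposition \ref{p:thinnon}) keeps $x_0$ on $\Gamma$. Next, I would check upper semicontinuity of the singular set under blow-ups: if $u_{r_k}\rightharpoonup u_0$ and $y_k\in S_{u_{r_k}}$ converges to $y_0$, then $y_0\in S_{u_0}$, again by the $C^{1,-a}$ convergence. These two properties, together with translation invariance of the class of minimizers, are exactly the hypotheses of the Federer reduction principle (see, e.g., Giusti, Chapter 11, or the version in \cite{MW07}).

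With these in place, one runs the standard induction on dimension. Assume for contradiction that $\mathcal{H}^\alpha(S_u)>0$ for some $\alpha>n-3$. The Federer principle produces a point $x_0\in S_u$ and a blow-up sequence so that the limit $u_0$ is a homogeneous degree $1-a$ minimizer on all of $\R^n$ whose singular set contains the origin and still has positive $\mathcal{H}^\alpha$ measure. Any other $\zeta\in S_{u_0}\cap\partial B_1$ yields, by a further blow-up at $\zeta$ and Lemma \ref{l:dreduct}, a homogeneous minimizer $u_{00}$ that is translation invariant in the direction $\zeta$; Lemma \ref{l:versus} then lets us regard $u_{00}$ as a minimizer on the codimension-one slice $\Pi_\zeta$. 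Iterating this slicing at most $\lfloor\alpha\rfloor+1$ times strips off linearly independent invariance directions, and since $\alpha>n-3$ we eventually arrive at a nontrivial homogeneous degree $1-a$ minimizer on a two-dimensional slice $\R\times\{0\}\times\R$ with a singular point at the origin. This contradicts Lemma \ref{l:2zero}, so $\mathcal{H}^\alpha(S_u)=0$ for every $\alpha>n-3$, proving the claimed bound.

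The main obstacle is purely bookkeeping and is to verify that each blow-up appearing in the reduction genuinely lands in the class of minimizers on the entire ambient space, and that the successive invariance directions extracted from Lemma \ref{l:dreduct} can be arranged to be linearly independent so that the dimension truly drops at each step. Proposition \ref{p:limitsol} gives the minimality on compact subsets (which suffices, since the slicing in Lemma \ref{l:versus} is local), while $C^{1,-a}$ regularity for $s>1/2$ supplies the convergence of gradients that is needed for both the closedness of $S_u$ and the upper semicontinuity under blow-ups. No additional analytic input is required beyond what has already been developed; the argument closes at the base case dimension two by Lemma \ref{l:2zero}, yielding $\dim_{\mathcal{H}}(S_u)\leq n-3$.
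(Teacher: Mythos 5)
Your proposal is correct and follows essentially the same Federer dimension-reduction route as the paper: blow-ups are homogeneous of degree $1-a$ and remain minimizers, and Lemmas \ref{l:dreduct} and \ref{l:versus} are used to strip off an invariance direction in the thin space and drop the dimension at each stage. The only (immaterial) difference is that you carry the reduction one step further and bottom out in a two-dimensional slice via Lemma \ref{l:2zero} together with nondegeneracy, whereas the paper stops in $\R^3$ and cites Lemma \ref{d:d2}, whose proof is exactly that final blow-up-and-slice step.
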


 \begin{proof}
  Suppose the $s>n-3$, and that $\H^s(S)>0$. By [\cite{g84}, Proposition 11.3] and 
  [\cite{g84}, Lemma 11.5] we may obtain a blow-up $u_0$ at $\H^s$ a.e. point of $S$ such that
  $\H^s(S_{u_0})>0$. We may then apply a blow-up at $H^s$ a.e. point of 
  $S_{u_0}\setminus \{0\}$ to obtain $u_{00}$  and $H^s(S_{u_{00}})>0$. By Lemma \ref{l:dreduct} 
  $u_{00}$ is constant in a direction $\nu$ orthogonal to the $x_{n+1}$ direction. Then
  the restriction of $u_{00}$ to a hyperplane $\Pi$ orthogonal to $\nu$ is a minimizer of \eqref{e:f}
  by Lemma \ref{l:versus}. Also, on $\Pi$ we have $H^{s-1}(S_{u_{00}})>0$. 
  We then 
  repeat the procedure $n-2$ more times to obtain a 
  degree $2s$ homogeneous function $\hat{u}$ in $\R^3$
  with $H^{s-(n-3)}(S)>0$, which is a contradiction to Lemma \ref{d:d2}. 
 \end{proof}

 %Using the same dimension reduction argument we can prove the following about the set of points for which the gradient vanishes
 %for a solution $u$ to (??):
 % \[
 %  \cS_u := \{(x,0)\mid \nabla_{x}u(x,0)=0\}
 % \]
  
  %\begin{lemma}  \label{l:gradzero}
 %  Let $u$ be a solution to (??) in $\Omega$. Let $K \Subset \Omega$. The Hausdorff dimension of $K \cap \cS_u$ is less than or equal to
 %  $n-3$.
 % \end{lemma}
 
 % \begin{proof}
  % As before we first consider dimension $n=3$. For a blow-up homogeneous solution of degree $1-a$, if $x_0 \neq 0$, then 
 %  $\nabla_x u(x_0,0)=0$ implies $u(x_0,0)=0$. For $s>1/2$ in $n=3$, it has already been shown that the gradient does not vanish. Hence, for a blow-up solution
  % the gradient only vanishes at the origin. For $s\leq 1/2$, the set where the gradient vanishes is empty. We now apply the dimension reduction argument as before.   
  %\end{proof}

\section{Local Minimizers and Stable Solutions}  \label{s:stable}
 In this section we provide examples of solutions that are local minimizers but not global minimizers. We also discuss examples of solutions that are stable. 
 We prove that for $s\leq 1/2$ all solutions are locally stable. 
 
 For this section we fix the equation we are studying as well as the functional we are minimizing. For symmetry purposes 
 we will assume $\lambda_+=\lambda_-=1$; however, as explained in the introduction, the results for different 
 values of $\lambda$ can be obtained by adding $c_1 u + c_2 x_n^{1-a}$.

 The specific equation we study is 
  \begin{equation}  \label{e:same}
   \int_{\Omega+}{|y|^a \langle \nabla u, \nabla \psi \rangle } = \int_{\Omega'} \psi(\chi_{\{u>0\}}-\chi_{\{u<0\}}).
  \end{equation}
 Solutions of this equation can be found by minimizing the functional
  \begin{equation}  \label{e:minsame}
   \int_{\Omega^+}{|\nabla v|^2 |x_n|^a} - 2\int_{\Omega'}{v^+ + v^- \ d \H^{n-1}}.
  \end{equation}

 We begin this section by computing the second variation as in \cite{MW07}. 
 
 \begin{lemma}  \label{l:2ndvar}
  Let $n \geq 2$. Let $u$ be a minimizer of \eqref{e:twofractional} with $\lambda_+=\lambda_-=1$.
   Assume $s>1/2$ $($so that $a<0)$. Then for $w \in H_0^1(a,B_r(x_0))$,
   \begin{equation}  \label{e:2ndvar}
    0 \leq \int_{B_r^+(x_0)} |\nabla w|^2 x_n^a - 2\int_{\{u=0\}\cap B_{r}'(x_0)} \frac{w^2}{|\nabla u|} d \H^{n-2}
   \end{equation} 
 \end{lemma}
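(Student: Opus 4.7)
The plan is to test minimality against the one-parameter family $u_{\varepsilon}:=u+\varepsilon w$. Since $w\in H^1_0(a,B_r(x_0))$, the competitor $u_{\varepsilon}$ has the same trace as $u$ on $\partial B_r(x_0)$, so by minimality $J(u_{\varepsilon})-J(u)\geq 0$ for all $\varepsilon$. After expanding to second order in $\varepsilon$, the first-order term will vanish by the first variation identity \eqref{e:same}, and \eqref{e:2ndvar} will fall out of the second-order term upon dividing by $\varepsilon^{2}$ and sending $\varepsilon\to 0$.

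The Dirichlet part is a perfect square in $\varepsilon$,
\[
 \int_{B_r^+(x_0)}|\nabla u_{\varepsilon}|^2 x_n^a - \int_{B_r^+(x_0)}|\nabla u|^2 x_n^a
   = 2\varepsilon \int_{B_r^+(x_0)} x_n^a\langle\nabla u,\nabla w\rangle
     + \varepsilon^2 \int_{B_r^+(x_0)} |\nabla w|^2 x_n^a.
\]
For the thin-space term $-2\int_{B_r'}|v|$, I would use the pointwise identity
\[
 |u+\varepsilon w|-|u|-\varepsilon w\operatorname{sgn}(u)
   = 2|u+\varepsilon w|\,\chi_{E_{\varepsilon}},
 \qquad
 E_{\varepsilon}:=\{\operatorname{sgn}(u)\neq\operatorname{sgn}(u+\varepsilon w)\},
\]
valid a.e.\ on $B_r'$. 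The sign-change set $E_{\varepsilon}$ is contained in the thin tube $\{|u|\leq \varepsilon |w|\}$ around $\{u=0\}\cap B_r'$. Because $s>1/2$ gives $u\in C^{1,-a}(\Omega^+\cup\Omega')$ by Theorem \ref{t:optimalreg}, the free boundary is a $C^{1,-a}$ hypersurface of $\Omega'$ near any point where $|\nabla u|\neq 0$, so tubular coordinates aligned with $\nabla u$ exist there. Expanding $u(y)\approx \nabla u(z)\cdot (y-z)$ near a regular free boundary point $z$ and performing a coarea computation yields, on the regular part of $\{u=0\}$,
\[
 2\int_{B_r'} |u+\varepsilon w|\,\chi_{E_{\varepsilon}}
   = \varepsilon^2 \int_{\{u=0\}\cap B_r'(x_0)}\frac{w^2}{|\nabla u|}\,d\H^{n-2}
     + o(\varepsilon^2).
\]

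Combining the two expansions and using \eqref{e:same} to kill the $O(\varepsilon)$ terms,
\[
 0\leq J(u_{\varepsilon})-J(u)
   = \varepsilon^2\Bigl[\int_{B_r^+(x_0)}|\nabla w|^2 x_n^a
       - 2\int_{\{u=0\}\cap B_r'(x_0)}\frac{w^2}{|\nabla u|}\,d\H^{n-2}\Bigr] + o(\varepsilon^2),
\]
from which \eqref{e:2ndvar} follows after dividing by $\varepsilon^2$ and letting $\varepsilon\to 0$. The main obstacle is making the coarea computation rigorous: it is clean on $\{u=0\}\cap\{|\nabla u|>0\}$ where the implicit function theorem supplies tubular coordinates and controls $|u+\varepsilon w|$ affinely in the normal direction, but near the singular set $S_u=\{u=0,\nabla u=0\}$ the factor $1/|\nabla u|$ may blow up and the coordinates degenerate. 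I would therefore first establish \eqref{e:2ndvar} for $w$ supported away from $S_u$ via a partition of unity over the regular free boundary, and then either pass to general $w\in H^1_0(a,B_r(x_0))$ by an approximation argument---using that $\H^{n-2}(S_u)=0$ by Theorem \ref{t:haus}---or interpret the right-hand side as $+\infty$ when the local integral diverges, in which case \eqref{e:2ndvar} holds in the extended sense.
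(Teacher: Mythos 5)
Your proposal is correct in substance and lands on the same inequality with the right constants, but it implements the second variation by a genuinely different device than the paper. The paper regularizes the absolute value: it replaces $|u|$ by $\gamma_\epsilon(u)$ with $\gamma_\epsilon''=\tfrac1\epsilon\chi_{\{|\cdot|<\epsilon\}}$, writes the second-order Taylor remainder in the integral form $\int_0^1\!\int_0^\alpha\gamma_\epsilon''(u+\tau t w)w^2$, converts the resulting thin-space integral by the coarea formula into level-set integrals of $u+\tau t w$ weighted by $1/|\nabla(u+\tau t w)|$, and then sends $\epsilon\to0$ followed by $t\to0$. You instead keep the functional exact, use the pointwise identity $|u+\varepsilon w|-|u|-\varepsilon w\,\operatorname{sgn}(u)=2|u+\varepsilon w|\chi_{E_\varepsilon}$ (valid a.e.\ since $\H^{n-1}(\{u(\cdot,0)=0\})=0$), kill the first-order terms with \eqref{e:same}, and extract the $\varepsilon^2\int_{\{u=0\}}w^2/|\nabla u|\,d\H^{n-2}$ asymptotics of the sign-change tube directly via tubular coordinates; the factor-of-two bookkeeping checks out. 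What your route buys is transparency about where the hypersurface term comes from and an explicit treatment of the singular set, which the paper only handles implicitly (its coarea step is justified by taking $w\equiv0$ near the origin, i.e.\ away from the singular point of its examples, and its later stability lemma indeed restricts to $w\in C_0^\infty(B_r\setminus\{0\})$); what the paper's regularization buys is that it never needs the tubular-neighborhood geometry of the level sets, only the coarea formula for $u+\tau tw$. Two refinements you should make explicit: (i) the asymptotics of the sign-change tube require $|\nabla u|\geq c>0$ on $\{u=0\}\cap\operatorname{supp}w$ and $w$ continuous, so prove the inequality first for smooth $w$ supported away from $S_u$, exactly as you indicate; (ii) for the extension to general $w\in H^1_0(a,B_r(x_0))$, quoting $\H^{n-2}(S_u)=0$ from Theorem \ref{t:haus} is not by itself enough, because for $a<0$ sets of dimension $n-2$ can carry positive capacity (as the paper's own remark after the lemma stresses); what you need is that $S_u$, having dimension at most $n-3<n-2+a$, has zero $H^1(a)$-capacity, so that cutting $w$ off near $S_u$ perturbs the weighted Dirichlet integral negligibly while the right-hand side passes to the limit by monotone convergence---or else adopt your fallback of reading the right-hand side as $+\infty$ when the integral diverges.
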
 
  
  \begin{remark}
  This formula may seem strange because $w$ is only evaluated on a set of co-dimension $2$. However, when $-1<a<0$
  sets of Hausdorff dimension $n-2$ may have positive capacity, and consequently,  $w$ will have a trace on such sets. 
  \end{remark}
  
 \begin{proof}
  %We assume that $\lambda_+ = \lambda_- =1$. 
  We begin by considering the modified functional
   \[
    E_{\epsilon}(u):=\int_{B_r^+(x_0)} x_n^a|\nabla u|^2 - 2 \int_{B_{r}'(x_0)} \gamma_{\epsilon}(u) d \H^{n-1}.
   \]
  where $\gamma_{\epsilon}(u)$ is an approximation of $|u|$ such that $\gamma_{\epsilon}''(u) = 1/\epsilon$ if $|u|<\epsilon$ and zero otherwise. 
   Now we label 
  the first variation of $E_{\epsilon}$ as
   \[
    t\delta E_{\epsilon}(u)(w) := \int_{B_{r}^+(x_0)} 2\langle \nabla u , \nabla w \rangle  x_n^a - 2\int_{B_{r}'(x_0)} \gamma_{\epsilon}'(u)w.
   \]
  We then have 
   \[
    \frac{1}{t^2}\left(E_{\epsilon}(u+tw)-E_{\epsilon}(u)-t\delta E_{\epsilon}(u)(w) \right)=A_{\epsilon}^t
   \]
  where
   \[
    A_{\epsilon}^t :=\frac{1}{t^2} \int_{B_r^+(x_0)}t^2|\nabla w|^2 x_n^a -\frac{2}{t^2}\int_{B_r'(x_0)} 
       {\gamma_{\epsilon}(u+tw)-\gamma_{\epsilon}(u)-t\gamma_{\epsilon}'(u)w} \ d \H^{n-1}.
   \]
  We can rewrite the second term to obtain
  %%Remember to fix the correct constants over the half ball or full ball. 
   \[
    \begin{aligned}
    A_{\epsilon}^t &= \int_{B_r^+(x_0)}|\nabla w|^2 x_n^a  
       -2\int_{B_{r}'(x_0)}\int_0^1 \int_0^{\alpha} \gamma_{\epsilon}''(u+\tau tw)w^2 \ d\tau \ d\alpha \ d\H^{n-1}\\
       &= \int_{B_r^+(x_0)}|\nabla w|^2 x_n^a  
       -2\int_0^1 \int_0^{\alpha} \frac{1}{\epsilon}\int_{B_{r}'(x_0)\cap \{|u+\tau tw|<\epsilon\}}w^2 \ d\H^{n-1}\ d\tau \ d\alpha \\
       &= \int_{B_r^+(x_0)}|\nabla w|^2 x_n^a  \\
      &\quad -2\int_0^1 \int_0^{\alpha} \frac{1}{\epsilon}\int_{-\epsilon}^{\epsilon}\int_{B_{r}'(x_0)\cap \{u+\tau tw=\sigma\}}\frac{w^2}{|\nabla(u+ \tau tw)|}  
       \ d\H^{n-2} \ d \sigma\ d\tau \ d\alpha \\
    \end{aligned}
   \]
   with the last equality coming from the coarea formula which may be utilized since $w\equiv 0$ in a neighborhood of the origin. 
  Then as $\epsilon \to 0$
   \[
    A_{\epsilon}^t \to A_0^t := \int_{B_r^+(x_0)}|\nabla w|^2 x_n^a  
       -4\int_0^1 \int_0^{\alpha} \int_{B_{r}'(x_0)\cap \{u+\tau tw=0\}}\frac{w^2}{|\nabla(u+\tau tw)|} \ d\H^{n-2}\ d\tau \ d\alpha.
   \]
  We now conclude that since $u$ is a minimizer
   \[
    0 \leq \frac{1}{t^2}\left(E(u+tw)-E(u)-t\delta E(u)w \right) =A_0^t. 
   \]
  Letting $t \to 0$ we obtain 
   \[
    0 \leq \int_{B_r(x_0)^+} |\nabla w|^2 x_n^a - 2\int_{\{u=0\}\cap B_{r}'(x_0)} \frac{w^2}{|\nabla u|} d \H^{n-2}.
   \]
 \end{proof}
 
 Although this second variation inequality \eqref{e:2ndvar} can be computed as in \cite{MW07}, we cannot utilize this inequality in an elementary manner as in 
 \cite{MW07} to conclude that the singular set for minimizers is empty. For the blow-up of the solution with singular point (constructed shortly below) 
 our inequality \eqref{e:2ndvar}
 scales correctly on both terms so that a scaling argument does not lead to a contradiction. Also, for this blow-up solution, the second term in
 \eqref{e:2ndvar} is finite. This was not the case when $s=1$ which led to the conclusion that the singular set is empty for minimizers when $s=1$.

 We now construct solutions which are not global minimizers, but are local minimizers. For simplicity we give our examples when $n=2$ which also provide examples in higher
 dimensions simply by adding variables. 
 
 We  define 
 \begin{equation}  \label{e:candidate}
  u(x):= c_{a}\int_{B_1'}\frac{w(y)}{|x-y|^{1+a}} \ dy,
 \end{equation}
 with the function 
 \[
  w(r,\theta) = 
  \begin{cases}
   1 &\text{ if } \theta \in [0,\pi/2] \cup [\pi,3\pi/2] \\
   -1 &\text{ if }  \theta \in (\pi/2,\pi) \cup (3\pi/2,2\pi). 
  \end{cases}
 \]
 From the symmetry, it is clear that 
 \[
  w(r,\theta) = 
  \begin{cases}
   u(r,\theta,0) >0  &\text{ if } \theta \in (0,\pi/2) \cup (\pi,3\pi/2) \\
   u(r,\theta,0) <0 &\text{ if } \theta \in (\pi/2,\pi) \cup (3\pi/2,2\pi) \\
   u(r,\theta,0) =0 &\text{ if } \theta \in \{0,\pi/2,\pi,3\pi/2\}. 
  \end{cases}
 \]
The constant $c_a$ is then chosen so that $u$ is a solution to \eqref{e:same} on $B_1^+$.  
%Also, more complicated examples are easy to construct in higher dimensions. For instance, it is easy to compute
 %that the example in (??) is not a global minimizer since the global minimizer will have a sign. Maybe I could make this a Lemma.?? 
 
 We define a local minimizer 
 at $x_0$ to be a 
 solution $u$ such that there exists $r>0$ such that for every $w \in C_{0}^{\infty}(B_r(x_0))$
  \[
   E(u)\leq E(w).
  \]

  \begin{lemma}
   Let $s\leq 1/2$. Let $u$ be as constructed above. 
   %Then for any $x \in \Omega'$, if $B_r(x_0)\Subset \Omega$, then 
   %$u$ is stable on $B_r(x_0)$.  
   Then $u$ is stable on $B_r(x_0)$ for any $x_0 \in B_1$ with $B_r(x_0)\subset B_1$. 
  \end{lemma}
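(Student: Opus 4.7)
The plan is to establish the nonnegativity of the second variation of $E$ at the critical point $u$. Using that $u$ solves \eqref{e:same} to cancel first-order terms, together with the pointwise identity $|u+tw|-|u|-tw\,\mathrm{sign}(u) = 2|u+tw|\chi_{\{u(u+tw)<0\}}$ (valid $\H^{n-1}$-a.e.\ on $\Omega'$ by Proposition \ref{p:nosep}), one obtains the exact identity
\[
E(u+tw) - E(u) \;=\; t^2 \int_{B_r(x_0)^+}|\nabla w|^2\,|x_n|^a \;-\; 4\int_{\{u(u+tw)<0\}}|u+tw|\,d\H^{n-1},
\]
and stability reduces to verifying $\liminf_{t\to 0^+}(E(u+tw)-E(u))/t^2 \geq 0$ for every $w\in C_0^\infty(B_r(x_0))$.

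The main step is to bound the boundary correction. Since on $\{u(u+tw)<0\}$ one has $|u+tw| = |tw|-|u|\leq t\|w\|_\infty$ and $|u(\cdot,0)|\leq t\|w\|_\infty$, the correction is controlled by $t\|w\|_\infty\cdot|\{|u(\cdot,0)|\leq t\|w\|_\infty\}\cap\operatorname{supp} w|$. Combining the optimal H\"older regularity $u\in C^{0,2s}$ of Theorem \ref{t:optimalreg} with the thin nondegeneracy of Proposition \ref{p:thinnon} and the explicit structure of $u$ from \eqref{e:candidate}, one establishes the measure bound $|\{|u|\leq\delta\}\cap B_r'(x_0)|\leq C\delta^{1/2s}$ away from singular points, yielding a correction of size $O(\|w\|_\infty^{1+1/2s}t^{1+1/2s})$. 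Since $1+1/2s\geq 2$ whenever $s\leq 1/2$, after dividing by $t^2$ the correction is $O(t^{1/2s-1})$, which vanishes in the limit $t\to 0^+$ for $s<1/2$ and leaves only the nonnegative Dirichlet term.

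The main obstacle is to treat (i) the singular point $0\in\Gamma$, where the free boundary degenerates into two crossing lines and the pointwise estimate $|u|\gtrsim d^{2s}$ fails, and (ii) the borderline case $s=1/2$ where the correction is genuinely of order $t^2$. For (i) I would pass to the blow-up $u_0$ (homogeneous of degree $2s$) and compute $\lim_{t\to 0^+} t^{-2}\int|u+tw|\chi_{\{u(u+tw)<0\}}$ via the rescaling $x'=t^{1/2s}y'$, obtaining an explicit scale-invariant quadratic form in $w$ that is compared to the Dirichlet energy using the fourfold antisymmetry of the sector function in \eqref{e:candidate}. For (ii) the coarea formula, applied as in the proof of Lemma \ref{l:2ndvar}, yields the trace-type inequality $\int_{B_r(x_0)^+}|\nabla w|^2\geq 2\int_{\Gamma}w^2/|\nabla u|\,d\H^{n-2}$, which is verified by the lower bound on $|\nabla u|$ on the smooth part of $\Gamma$ coming directly from \eqref{e:candidate} combined with a standard trace inequality in the half-space with weight $a=0$.
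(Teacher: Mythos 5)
Your opening reduction is correct: expanding $E(u+tw)-E(u)$, using \eqref{e:same} to kill the linear term, and the pointwise identity on $\{u(u+tw)<0\}$ gives exactly the $-4\int|u+tw|\chi_{\{u(u+tw)<0\}}$ correction, and for $s<1/2$ your mechanism (the sign-change set is contained in $\{|u|\le t\|w\|_\infty\}$, which is small because $u$ leaves its zero set at rate $d^{2s}$) is the same phenomenon the paper exploits, phrased differently: the paper regularizes $|u|$, applies the coarea formula as in Lemma \ref{l:2ndvar}, and observes that the resulting negative term $\int w^2/|\nabla u+\tau t w|\,d\H^{n-2}$ tends to $0$ as $t\to0$ because $|\nabla u|\to\infty$ at $\{u=0\}$ when $s\le 1/2$. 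Note, however, that your measure bound $|\{|u|\le\delta\}\cap B_r'|\le C\delta^{1/2s}$ requires a \emph{pointwise} lower bound $|u|\ge c\,\mathrm{dist}(\cdot,\{u=0\})^{2s}$, which follows neither from Theorem \ref{t:optimalreg} (an upper bound) nor from Proposition \ref{p:thinnon} (a sup-type nondegeneracy on balls centered at free boundary points); for the explicit $u$ of \eqref{e:candidate} it must be extracted from the representation formula, and in your write-up it is asserted rather than proved.

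The two places you flag as delicate are where the proposal actually breaks. For $s=1/2$ your fix rests on ``a standard trace inequality in the half-space with weight $a=0$'' onto the codimension-two set $\{u=0\}$: no such trace exists, since for $a\ge 0$ sets of dimension $n-2$ have zero $H^1(a)$-capacity — this is precisely the point of the remark following Lemma \ref{l:2ndvar}, where traces on $(n-2)$-dimensional sets are available only for $-1<a<0$. What saves the borderline case is not a finite trace-term comparison but that $|\nabla u|$ still diverges (logarithmically) at $\{u=0\}$ when $a=0$, so the correction is $o(t^2)$; in your framework this requires the refined bound $|u|\gtrsim d|\log d|$, which your $O(\delta^{1/2s})=O(\delta)$ estimate does not capture. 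For the singular point, your blow-up sketch would, in the limit, produce exactly the second term of \eqref{e:2ndvar}, and the paper explicitly notes that for this blow-up that term is finite and scales identically with the Dirichlet term, so no antisymmetry or scaling argument can decide the sign there — a genuinely quantitative comparison is needed (this is exactly the type of computation carried out, in the opposite direction, in Section \ref{s:symmetry} via the Beta-function inequality). The paper sidesteps the issue by proving stability only for $w\in C_0^{\infty}(B_r\setminus\{0\})$, i.e. perturbations vanishing near the origin; if you impose the same restriction, your argument closes for $s<1/2$ (modulo the nondegeneracy point above), but as written the proposal claims more than it establishes at both $s=1/2$ and at the singular point.
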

 
  \begin{proof}
  Let $w \in C_0^{\infty}(B_r \setminus \{0\})$. As before we compute
    \[
     \int_{B_r^+(x_0)}|\nabla w|^2 x_n^a  
       -2\int_0^1 \int_0^{\alpha} \frac{1}{\epsilon}\int_{B_{r}'(x_0)\cap \{|u+\tau tw|<\epsilon\}}w^2 \ d \H^{n-1} \ d\tau \ d\alpha \\
    \]
   Since for $s<1/2$, $|\nabla u|\to \infty$ as we approach the set $\{u=0\}$, for $t$ small enough $|\nabla u + \tau tw|\neq 0$. We now show how
   to rigorously apply the co-area formula. On $|u|>\delta$, the solution $u$ is Lipschitz.  Then  
    \[
     \begin{aligned}
      \frac{1}{\epsilon}\int_{B_{r}'(x_0)\cap \{|u+\tau tw|<\epsilon\}}w^2  d\H^{n-1}
       &= \lim_{\delta \to 0} \frac{1}{\epsilon}\int_{B_{r}'(x_0)\cap \{|u+\tau tw|<\epsilon\}\cap \{|u|>\delta\}}w^2  d\H^{n-1}\\
       &= \lim_{\delta \to 0} \frac{1}{\epsilon}\int_0^{\epsilon}\left(\int_{\{u+\tau tw=\theta\}}\frac{w^2\chi_{|u|>\delta}}{|\nabla u + \tau tw|}  d\H^{n-2} \right) \ d\theta \\
       &=  \frac{1}{\epsilon}\int_0^{\epsilon}\left(\int_{\{u+\tau tw=\theta\}}\frac{w^2}{|\nabla u + \tau tw|}  d\H^{n-2} \right) d\theta.
     \end{aligned}
    \]
   Then 
    \[
     \lim_{\epsilon \to 0}\frac{1}{\epsilon}\int_0^{\epsilon}\left(\int_{\{u+\tau tw=\theta\}}\frac{w^2}{|\nabla u + \tau tw|}  d\H^{n-2} \right) d\theta
     = \int_{\{u+ \tau tw=0\}} \frac{w^2}{|\nabla u + \tau tw|} d\H^{n-2}. 
    \]
   So as before we obtain that 
    \[
     A_0^t = \int_{B_r^+}|\nabla w|^2 x_n^a-4\int_0^1 \int_0^{\omega} \int_{B_{r}'(x_0)\cap \{u+\tau tw=0\}} \frac{w^2}{|\nabla u + \tau tw|} d\H^{n-2} d\tau d\omega. 
    \]
   For fixed $w$, as $t\to 0$, $|\nabla u + \tau tw|\to \infty$, so that for fixed $w$,
    \[
     A_0^t >0 
    \]
   for any $t\leq t_0$ with $t_0$ depending on $w$. 
  \end{proof}

 \section{Nonminimizing symmetric solutions}  \label{s:symmetry}
 If $a<0$ and $u$ is a minimizer of the functional, then the second variational formula is 
  \begin{equation}   \label{e:2variation}
     2\int_{\{u=0\}\cap B_{r}'(x_0)} \frac{w^2}{|\nabla u|} d \H^{n-2} \leq \int_{B_r(x_0)^+} |\nabla w|^2 x_n^a,
  \end{equation}
 as long as $w \in H_0^1(a,B_r(x_0))$. 
   
   We now consider the symmetric solution $u$ constructed in Section \ref{s:stable}. We will shortly show 
   that solution  $u$ to \eqref{e:same} is not a minimizer of \eqref{e:minsame}, see Remark \ref{r:notmin}. 
   Even though $u$ is not a minimizer 
   of \eqref{e:minsame}, one may verify that  $u$ in \eqref{e:candidate} 
  satisfies the nondegeneracy condition 
  \begin{equation}   \label{e:nondegene}
   \sup_{B_r} |u| \geq cr^{1-a}. 
  \end{equation}
  Indeed, if $u$ did not satisfy \eqref{e:nondegene}, then if $u_r=r^{a-1}u(rx)$, and if $r \to 0$, then $u_r \to u_0\equiv 0$ in 
  $C^{1,\beta}$ for any $\beta<-a$. However,  one would also have preserved in the limit 
  that $\lim_{x_n \to 0} x_n^a u_{x_n} = \pm 1$ depending on the angle $\theta$ resulting in a contradiction. 
  
  Since \eqref{e:nondegene} holds, by letting $u_r=r^{a-1}u(rx)$ and picking a subsequence $r_k \to 0$, 
  we have that $u_{r_k} \to u_2$ which will 
  be a solution to \eqref{e:same} and also homogeneous of degree $1-a$ by Proposition \ref{p:weiss}. 
   
   \begin{theorem}   \label{t:u2}
    Let $u_2$ be the symmetric solution to \eqref{e:same} described above which is homogeneous of degree $1-a$. 
    Then $u_2$ is not stable at the origin and therefore not a minimizer of \eqref{e:minsame} on any open bounded set 
    containing the origin.    
    \end{theorem}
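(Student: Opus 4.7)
The plan is to exhibit a test function $w\in H_0^1(a, B_\rho)$ for some $\rho>0$ on which the second variation
\[
 Q(w) := \int_{B_\rho^+} |\nabla w|^2 x_n^a\,dx - 2\int_{\{u_2=0\}\cap B_\rho'} \frac{w^2}{|\nabla u_2|}\,d\H^{n-2}
\]
is strictly negative. Since a stable solution satisfies $Q(w)\geq 0$ for every admissible $w$, producing such a $w$ proves instability at the origin, and the second claim follows because local minimality implies stability.

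The key preliminary observation is the scale invariance of $Q$. Because $u_2$ is homogeneous of degree $1-a$, the gradient $|\nabla u_2|$ is homogeneous of degree $-a$ and $\Gamma = \{u_2=0\}$ is a cone; the change of variable $w(x) = \tilde w(x/\rho)$ then scales both terms of $Q$ by the identical factor $\rho^{n-2+a}$. Hence it suffices to work in $B_1$, and once a violating $\tilde w$ is produced there, it can be rescaled into any smaller ball.

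The natural ansatz is the separation $w(x) = \eta(|x|) f(x/|x|)$ with $\eta(1)=0$. Computing the integrals in polar coordinates yields
\[
 Q(w) = A\int_0^1 \eta'(r)^2 r^{n-1+a}\,dr \;+\; (B - 2C)\int_0^1 \eta(r)^2 r^{n-3+a}\,dr,
\]
where $A = \int_{S^{n-1}_+} f^2 \omega_n^a\,d\omega$, $B = \int_{S^{n-1}_+} |\nabla_\omega f|^2 \omega_n^a\,d\omega$, and $C = \int_{\Gamma\cap S^{n-2}} f^2/|\nabla u_2|\,d\sigma$. A weighted Hardy inequality for $\eta$ in the radial variable (valid because $n+a > 2$ in the relevant dimension) bounds the $r^{n-3+a}$-weighted norm of $\eta$ by a constant multiple of the $r^{n-1+a}$-weighted norm of $\eta'$, with sharp constant $4/(n+a-2)^2$. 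This reduces the task to finding an angular function $f$ satisfying
\[
 2C - B \;>\; \tfrac{(n+a-2)^2}{4}\,A;
\]
given such an $f$, picking $\eta$ close to the Hardy extremal (then truncated to satisfy $\eta(1)=0$) produces $Q(w) < 0$.

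The remaining task is to construct such an $f$. The simplest attempt $f\equiv 1$ kills $B$ and turns $C$ into the sum of $1/|\nabla u_2|$ over the four dihedrally symmetric points of $\Gamma\cap S^{n-2}$, which can be read off from the explicit Riesz representation \eqref{e:candidate}. If $f\equiv 1$ fails to close the inequality, one concentrates $f$ in a neighborhood of $\Gamma\cap S^{n-2}$, making $C$ arbitrarily large while $B$ grows only moderately (a favorable trade because $|\nabla u_2|$ has fixed polynomial size on the free-boundary cone). The main obstacle is quantitative: controlling $|\nabla u_2|$ on $\Gamma$ sharply enough to beat the Hardy threshold $(n+a-2)^2/4$. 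This boils down to bounding a concrete Riesz-kernel integral against the sign function defining $u_2$; once that comparison is verified, the scale invariance gives $Q(w) < 0$ on every ball $B_\rho$ around the origin, proving the theorem.
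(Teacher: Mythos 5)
Your setup is the right one and matches the paper's starting point: disprove stability by exhibiting $w$ with negative second variation \eqref{e:2ndvar}, and note that homogeneity of $u_2$ makes both terms scale by the same factor $\rho^{n-2+a}$, so instability at one scale transfers to every ball containing the origin. But the heart of the proof is missing. Your reduction via separation of variables and a weighted Hardy inequality ends with ``find $f$ with $2C-B>\tfrac{(n+a-2)^2}{4}A$,'' and you defer exactly this: you never compute $|\nabla u_2|$ on the free boundary (it is \emph{not} simply read off from \eqref{e:candidate}, since $u_2$ is the blow-up; the paper identifies $\partial_{x_1}u_2$ with an explicit auxiliary potential $v_2$ by invoking the classification of homogeneous $a$-harmonic functions of degree $-a$, obtaining $|\nabla u_2|(x_1,0,0)=\tfrac{4c_a}{-a}|x_1|^{-a}$ on the free-boundary line), and you never verify the resulting quantitative comparison. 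The paper does so by choosing the concrete test function $w(x)=c_a\int_0^1|x-(y,0,0)|^{-1-a}\,dy$, whose weighted Dirichlet energy equals $\int_0^1 w(y,0,0)\,dy$ exactly, and reducing the sign of \eqref{e:2ndvar} to the Beta-function inequality $\tfrac{1}{1-a}<B(1+a,1-2a)$, proved in Lemma \ref{Beta} by log-convexity of $\Gamma$. Without some substitute for these computations your argument does not conclude.

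Moreover, your fallback — ``concentrate $f$ near $\Gamma\cap S^{n-1}$ to make $C$ arbitrarily large while $B$ grows only moderately'' — is not a favorable trade and cannot rescue the argument. For $-1<a<0$ the sets of dimension $n-2$ carrying the term $C$ have \emph{positive} weighted capacity (this is precisely why the trace in \eqref{e:2ndvar} makes sense), so a trace inequality $C\le K(a)\,(A+B)$ holds with a fixed constant and concentration cannot drive the ratio to infinity; whether the stability quotient dips below the Hardy threshold is a sharp-constant question, not a soft one. This is underscored by the paper's computation: the instability margin is $B(1-a,1)-B(1-2a,1+a)$, which tends to $0$ as $a\to 0^-$ (consistent with the stability result for $s\le 1/2$ in Section \ref{s:stable}), so any valid proof must capture an essentially optimal constant. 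In short, the strategy is the same as the paper's, but the two quantitative pillars — the explicit gradient of $u_2$ along $\{u_2=0\}$ and an explicit (or sharply estimated) test function beating it — are exactly what is absent, and the heuristic offered in their place would fail.
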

   
   \begin{remark}  \label{r:notmin}
    Since $u_2$ is not a minimizer of \eqref{e:minsame}, it follows that $u$ as defined in \eqref{e:candidate} is also 
    not a minimizer. Indeed, if $u$ is a minimizer, then $u_r=r^{a-1}u(rx)$ would then also be a minimizer, and so in 
    the limit $u_2$ would also have to be a minimizer. 
    \end{remark}
   
   \begin{proof}
    We define 
    \begin{equation}  \label{e:candidate2}
      v(x):= c_{a}\int_{-1}^0\frac{-2}{|x-(y,0,0)|^{1+a}} \ dy +  c_{a}\int_{0}^1\frac{2}{|x-(y,0,0)|^{1+a}} \ dy,
     \end{equation}
     with $c_a$ as in \eqref{e:candidate}. Notice that $\mathcal{L}_a v=0$ off the $x$-axis.  
          We may write explicitly (we assume $x_1>0$)
     \[
     \begin{aligned}
     v(x_1,0,0)&= c_a \int_0^{2x} \frac{2}{|(x_1,0,0) - (y,0,0)|} \ dy  \\
     &\quad + c_1 \int_{-1}^{-1+x} \frac{-2}{|(x_1,0,0) - (y,0,0)|} \ dy \\
     &= \frac{4c_a}{-a} x^{-a}+ c_1 \int_{-1}^{-1+x} \frac{-2}{|(x_1,0,0) - (y,0,0)|} \ dy
     \end{aligned}
     \]
     If we rescale by $v_r(x)=r^{a}v(rx)$, and let $r \to 0$, then the second term disappears and we obtain $v_2$
     with 
     \[
        \mathcal{L}_a v_2(x_1,x_2,x_3) =
        \begin{cases}
         0 &\text{ if } x_2 \neq 0 \text{ or } x_3 \neq 0 \\
         2 &\text{ if } x_1 >0 \text{ and } x_2=x_3=0 \\
         -2 &\text{ if } x_1 <0 \text{ and } x_2=x_3=0.
        \end{cases}
     \]
     Then $\mathcal{L}_a (\partial_{x_1} u_2 - v_2)=0$ and homogeneous of degree $-a$. From the classification 
     of homogeneous solutions \cite{a13}, there are no homogeneous solutions of degree $-a$, we conclude that 
     $\partial_{x_1}u_2 - v_2 \equiv 0$. Thus we have determined that 
     \[
      \left| \frac{\partial}{\partial x_2} u_2 (x_1,0,0)\right|=
       \frac{4c_a}{-a} x^{-a}. 
     \]
   \end{proof}
   We now construct our test function by defining 
   \[
    w(x)= c_a \int_{0}^1 \frac{1}{|x-(y,0,0)|^{1+a}} \ dy. 
   \]
   We have constructed $w$ so that 
   \[
    \int_{\mathbb{R}_+^3} |\nabla w|^2 x_n^a \ dx = \int_{-1}^1 w(y,0,0) \ dy.  
   \]
   Furthermore, 
   \[
   w(x_1,0,0)= \frac{c_a}{-a}(x^{-a} + (1-x)^{-a}). 
   \]
  Then
   \[
   \begin{aligned}
    &\int_{0}^1 w(y,0,0) \ dy - 2\int_0^1 y^{a} \frac{w^2(y,0,0)}{|\nabla u_2(y,0,0)|} \ dy \\
    &= \frac{c_a}{-a} \int_{0}^1 y^{-a} +(1-y)^{-a} \ dy - \frac{c_a}{-2a}\int_0^1 y^{a} (y^{-a} +(1-y)^{-a})^2 \ dy \\
    &= \frac{c_a}{-a} \frac{2}{1-a} - \frac{c_a}{-2a}\int_0^1 y^{-a} + 2(1-y)^{-a} + y^a(1-y)^{-2a} \ dy \\
    &= \frac{c_a}{-a}\left(\frac{1}{2(1-a)} - \frac{1}{2}\int_0^1 y^a (1-y)^{-2a} \ dy \right) \\
    &= \frac{c_a}{-2a} \left(\frac{1}{1-a} - B(1+a,1-2a) \right) \\
    &= \frac{c_1}{-2a} (B(1-a,1) -B(1-2a,1+a))<0,
    \end{aligned}
   \]
   where $B(1+a,1-2a)$ is the Beta function and a proof of the last inequality is in Lemma \ref{Beta} at the end of the paper.
     
   Then 
   \[
    \begin{aligned}
   &\lim_{r \to \infty} \int_{B_r^+(x_0)} |\nabla w|^2 x_n^a - 2\int_{\{u_2=0\}\cap B_{r}'(x_0)} \frac{w^2}{|\nabla u_2|} d \H^{1} \\
   &\leq  \int_{-1}^1 w(y,0,0) \ dy - 2\int_{0}^1 y^{a} \frac{w^2(y,0,0)}{|\nabla u_2(y,0,0)|} \ dy \\
   &<0.
   \end{aligned}
   \]
   Now $w$ is not a valid test function since it does not have compact support. However, we may approximate $w$ with functions
   that do have compact support so that   \eqref{e:2ndvar} is not true for $u_2$ on $B_r$ for large enough $r$. Then by scaling, the inequality \eqref{e:2ndvar} is not true on any open ball containing the origin. This concludes the proof. 
   
   The solution $u_2$ is not the only symmetric solution to \eqref{e:same} that is homogeneous and symmetric. We now construct
   an infinitely family $u_i$ for $i \geq 2$ and $i \in \mathbb{Z}$. Although our construction for $u_2$ will also be valid for constructing all $u_i$, we choose now a different construction which will be useful later for the comparison principle. 
   We consider the domain $U_i = \{(x_1,x_2,x_3) \in B_1 \mid x_3 \geq 0 \text{ and } 0<\arctan(y/x)< \pi/i\}$. We let 
   $\tilde{u}_i$ be the positive minimizer of \eqref{e:minsame} subject to the boundary condition $\tilde{u}_i=0$ 
   on $\partial U_i \cap \{x_3>0\}$. 
   It is clear that there will be two minimizers $\pm \tilde{u}_i$. We may now use reflection on $\tilde{u}_i$
   and reflect across the plane  $-\sin(\pi/i)x+\cos(\pi/i)y+0z$ and then reflect $2i-1$ more times to obtain that $\tilde{u}_i$
   is a solution to \eqref{e:same} in $B_1^+$. Now \eqref{e:nondegene} will also hold for $\tilde{u}_i$ for the same reasons as for 
   $u$ given by \eqref{e:candidate}. Therefore, we let 
   \begin{equation}   \label{e:candidates}
    u_i = \lim_{r \to 0} r^{a-1} \tilde{u}_i(rx). 
   \end{equation} 
   $u_2$ constructed in this manner is the same as $u_2$ constructed before because by subtracting the two we obtain a 
   solution to $\mathcal{L}_a$ that is homogeneous of degree $1-a$ and may be reflected evenly across $\{x_3=0\}$ and 
   remain a solution. By the classification of homogeneous solutions in \cite{a13} it follows that we obtain the same $u_2$. 
   This argument also explains why we may take $\lim_{r \to 0}$ in \eqref{e:candidates} and not have to worry about taking
   a subsequence.

  \begin{theorem}\label{t:i}
   Let $u_i$ be given by \eqref{e:candidates}. Then $u_i$ is not a stable solution, and  consequently not 
   a minimizer on any bounded open set containing the origin.   
  \end{theorem}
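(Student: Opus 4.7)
The plan is to adapt the instability argument of Theorem \ref{t:u2} to the entire family $u_i$, $i\geq 2$. By the construction in \eqref{e:candidates}, each $u_i$ is homogeneous of degree $1-a$ and its thin free boundary is the union of $2i$ rays emanating from the origin, across which the sign of $u_i$ alternates. By the $2i$-fold rotational symmetry around the $x_3$-axis, it suffices to exhibit a test function $w$ for which the second variation
\[
  \int_{B_r^+} |\nabla w|^2 x_n^a\, dx - 2\int_{\{u_i=0\}\cap B_r'} \frac{w^2}{|\nabla u_i|}\, d\H^{n-2}
\]
is strictly negative on some ball $B_r$. The homogeneity $u_i(rx)=r^{1-a}u_i(x)$ then transfers the instability to every ball containing the origin, and Lemma \ref{l:2ndvar} rules out minimality on any such ball.

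The first step is to compute $|\nabla u_i|$ along one of the free boundary rays, which by rotation we may take to be the positive $x_1$-axis. Following the dipole argument used for $u_2$, I would introduce an auxiliary Riesz-type potential
\[
  v_i(x) \;=\; c_a \sum_k \sigma_k \int_{L_k} \frac{1}{|x-y|^{1+a}}\, d\H^1(y),
\]
where the $L_k$ are segments along the $2i$ free-boundary rays of $u_i$ and $\sigma_k\in\{\pm1\}$ reproduces the jump pattern of the directional derivative of $u_i$ across the chosen ray. Rescaling produces $\tilde v_i$ with $\L_a(\partial_\nu u_i - \tilde v_i)=0$ and homogeneous of degree $-a$, where $\nu$ is the in-plane unit normal to the ray. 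The classification in \cite{a13} rules out nontrivial $a$-harmonic functions of this degree with the required symmetry, so $\partial_\nu u_i \equiv \tilde v_i$, yielding an explicit expression $|\nabla u_i(x_1,0,0)|=C_{a,i}\,x_1^{-a}$ for some computable constant $C_{a,i}>0$.

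Next I would take the same test function as in the proof of Theorem \ref{t:u2},
\[
  w(x) \;:=\; c_a\int_0^1 \frac{1}{|x-(y,0,0)|^{1+a}}\, dy,
\]
for which the identity $\int_{\R_+^3}|\nabla w|^2 x_n^a\, dx = \int_{-1}^{1} w(y,0,0)\, dy$ holds. Inserting this $w$ into the second variation and restricting the destabilizing integral to the positive $x_1$-axis (the contributions from the other $2i-1$ rays are either handled by symmetry or dropped as nonnegative) reduces the problem, exactly as in the $u_2$ calculation, to an algebraic inequality involving the Beta function: the only change from the $u_2$ case is that $\tfrac{4c_a}{-a}$ is replaced by $C_{a,i}$. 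After clearing constants, the inequality collapses to the form treated in Lemma \ref{Beta}. Approximating $w$ by $H^1_0(a,\cdot)$ cutoffs supported away from the origin then completes the argument.

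The hard part will be verifying strict negativity uniformly in $i$: for larger $i$ the wedges are thinner, the constant $C_{a,i}$ grows, and the destabilizing term correspondingly shrinks, so one must check that the Beta-function inequality still has the right sign. If for some $i$ the single-ray test function proves insufficient, the natural remedy is to sum several Riesz potentials on adjacent rays with signs matching the jump pattern of $u_i$, making the destabilizing contributions accumulate constructively. A further fallback is to invoke the comparison principle for the wedge minimizers $\tilde u_i \le \tilde u_2$ mentioned in the construction preceding Theorem \ref{t:i}, which controls $C_{a,i}$ in terms of $C_{a,2}$ and keeps us in the regime where Lemma \ref{Beta} applies.
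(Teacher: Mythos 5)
Your ``further fallback'' is in fact the paper's entire proof, and you should promote it to the main argument: the paper reflects $\tilde{u}_2-\tilde{u}_i$ evenly across $\{x_3=0\}$, notes it is $\mathcal{L}_a$-harmonic in the full wedge $\tilde U_i$, and applies the maximum principle to get $\tilde u_i\le \tilde u_2$, hence $u_i\le u_2$ after blow-up and $|\partial_{x_2}u_i(x_1,0,0)|\le|\partial_{x_2}u_2(x_1,0,0)|=\frac{4c_a}{-a}x_1^{-a}$ on the common free-boundary ray $\{x_1>0,\,x_2=x_3=0\}$. At that point the proof of Theorem \ref{t:u2} applies verbatim with the same test function $w$: a \emph{smaller} $|\nabla u_i|$ makes the destabilizing term $2\int_{\{u_i=0\}}w^2/|\nabla u_i|$ \emph{larger} than the corresponding term for $u_2$, so the negativity already established in the $u_2$ computation (via Lemma \ref{Beta}) carries over with no new Beta-function inequality. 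Your stated worry is therefore inverted: thinner wedges make $\tilde u_i$ smaller by comparison, so $C_{a,i}\le C_{a,2}$ rather than growing, and large $i$ only helps instability. Dropping the contributions of the other $2i-1$ rays, as you note, is harmless since they are nonnegative.

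Your primary route --- computing $|\nabla u_i|$ exactly on the ray via a multi-ray Riesz potential and the degree $-a$ classification of \cite{a13} --- is both heavier and not actually carried out: differentiating the thin-space data of $u_i$ transversally to one ray produces line measures on \emph{all} $2i$ rays with angular weights, so identifying $\partial_{x_2}u_i$ with an explicit potential and extracting $C_{a,i}$ is delicate, and you would then still need to verify the analogue of Lemma \ref{Beta} with $C_{a,i}$ in place of $\frac{4c_a}{-a}$, which you leave open. As written, that branch has a genuine gap; the comparison-principle argument closes it immediately and is the route the paper takes.
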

  
  \begin{proof}   
   We note that 
   \[
    \tilde{u}_i \leq \tilde{u}_2 \text{ on }  \partial U_i \cap \{x_3 >0\}. 
   \]
   Then if we reflect $\tilde{u}_2 - \tilde{u}_i$ evenly across $\{x_3=0\}$, then $\mathcal{L}_a (\tilde{u}_2 - \tilde{u}_i)=0$
   in the domain full domain $\tilde{U}_i= \{(x_1,x_2,x_3) \in B_1 \mid 0< \arctan(y/x)< \pi/i \}$. Then by the maximum principle
   we have that $\tilde{u}_2 \geq \tilde{u}_i$ in $\tilde{U}_i$. Then in the limit we also have 
   $u_2 \geq u_i$ in $\tilde{U}_i$. Therefore, 
   \[
    |\partial_{x_2} u_i(x_1,0,0)| \leq |\partial_{x_2} u_2(x_1,0,0)|= \frac{4 c_a}{-a} x_1^{-a}. 
   \] 
   
  Then the same proof as in Theorem \ref{t:u2} will hold. 
  \end{proof}

  The following questions remain open and are of further interest. 
  \begin{itemize}
   \item Does the free boundary $\Gamma$ have higher regularity (for instance $C^{\infty}$ regularity) when $s>1/2$?
   \item What is the regularity of the free boundary when $s\leq 1/2$?
   \item Are the symmetric solutions not only stable when $s \leq 1/2$ but also minimizers of the functional?
   \item Are there any singular points for minimizers when $s >1/2$?
  \end{itemize}
  
  \begin{lemma}\label{Beta}  Let $-1<a<0$. Then 
\[
\frac{1}{1-a}<B(1+a,1-2a).
\]
\end{lemma}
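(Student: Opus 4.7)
\medskip
\noindent\textbf{Proof plan for Lemma \ref{Beta}.}
The plan is to rewrite both sides as integrals on $[0,1]$, take their difference, and exhibit it as an integral of a manifestly nonnegative function. Since $-1<a<0$, both parameters $1+a$ and $1-2a$ of the Beta function are positive, so all integrals below converge. I will use the representation
\[
 B(1+a,1-2a)=\int_{0}^{1} y^{a}(1-y)^{-2a}\,dy,\qquad \frac{1}{1-a}=\int_{0}^{1}(1-y)^{-a}\,dy,
\]
where the second equality is the integral of the power $(1-y)^{-a}$.

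Subtracting, the goal becomes to show
\[
 I:=\int_{0}^{1}\bigl[y^{a}(1-y)^{-2a}-(1-y)^{-a}\bigr]dy>0.
\]
I would pull out the common factor $(1-y)^{-2a}$ and rewrite the integrand as $(1-y)^{-2a}\bigl[y^{a}-(1-y)^{a}\bigr]$. Next I would split $I$ as $\int_{0}^{1/2}+\int_{1/2}^{1}$ and apply the substitution $y\mapsto 1-y$ on the second piece. The two halves then combine into the symmetrized form
\[
 I=\int_{0}^{1/2}\bigl[(1-y)^{-2a}-y^{-2a}\bigr]\bigl[y^{a}-(1-y)^{a}\bigr]dy.
\]

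Now I would finish by inspecting the sign of each factor for $y\in(0,1/2)$. Since $a<0$, the map $t\mapsto t^{a}$ is strictly decreasing on $(0,\infty)$, and because $y<1-y$ on $(0,1/2)$ this gives $y^{a}-(1-y)^{a}>0$. Since $-2a>0$, the map $t\mapsto t^{-2a}$ is strictly increasing, and again $y<1-y$ forces $(1-y)^{-2a}-y^{-2a}>0$. Both factors being strictly positive on $(0,1/2)$, the integrand is strictly positive there, so $I>0$, which is the desired inequality.

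There is no real obstacle: the only point requiring a moment of care is making sure the substitution and the recombination into a single integral on $(0,1/2)$ are done correctly, and checking that $-1<a<0$ is exactly what makes both monotonicity statements point the right way (if $a$ were nonnegative, or $-2a$ nonpositive, one of the two factors would flip sign and the argument would collapse).
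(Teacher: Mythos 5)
Your argument is correct, but it takes a genuinely different route from the paper. You use the Euler integral representation $B(1+a,1-2a)=\int_0^1 y^{a}(1-y)^{-2a}\,dy$ together with $\frac{1}{1-a}=\int_0^1(1-y)^{-a}\,dy$, factor the difference of integrands as $(1-y)^{-2a}\bigl[y^{a}-(1-y)^{a}\bigr]$, and symmetrize via $y\mapsto 1-y$ to obtain $\int_0^{1/2}\bigl[(1-y)^{-2a}-y^{-2a}\bigr]\bigl[y^{a}-(1-y)^{a}\bigr]\,dy$, whose integrand is strictly positive on $(0,1/2)$ because $t\mapsto t^{a}$ is decreasing and $t\mapsto t^{-2a}$ is increasing for $-1<a<0$; the hypothesis also makes both Beta parameters positive, so all integrals converge, and the computation checks out. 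The paper instead works on the Gamma side: using $B(x,y)=\Gamma(x)\Gamma(y)/\Gamma(x+y)$, the reflection formula, and $B(x,y)B(x+y,1-y)=\pi/(x\sin(\pi y))$, it reduces the claim to $B(x,x)<2/x$ for $0<x<1$, which it proves from the log-convexity of $\Gamma$ (writing $x+1=\tfrac{2x+1}{2}+\tfrac12$). Your route is more elementary and self-contained, needing only the Beta integral, a substitution, and monotonicity of powers, whereas the paper's route leans on standard special-function identities plus the convexity property of $\Gamma$. One small inaccuracy in your closing aside: if $a$ were positive, both factors in the symmetrized integrand would flip sign simultaneously, so the product would remain nonnegative and the inequality actually persists for $0<a<1/2$ (with equality at $a=0$); this does not affect the validity of your proof on the stated range $-1<a<0$.
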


\begin{proof} Recall that $B(x,y)=\frac{\Gamma(x)\Gamma(y)}{\Gamma(x+y)}$ and the identities 
\[
\Gamma(1+z)=\Gamma(z)z, \qquad \Gamma(1-z)\Gamma(z)=\frac{\pi}{\sin(\pi z)}, \qquad B(x,y)B(x+y,1-y)=\frac{\pi}{x\sin(\pi y)}.
\]
Using these identities, one obtains
\begin{align*}
B(1+a,1-2a)&=%\frac{\Gamma(1+a)\Gamma(1-2a)}{\Gamma(2-a)}
\frac{\Gamma(1-(-a))\Gamma(1+(-2a))}{\Gamma(1+(1-a))}
%&=\frac{\pi}{\sin(-\pi a)}\frac{1}{\Gamma(-a)}\frac{\Gamma(-2a)(-2a)}{\Gamma(1-a)(1-a)}=\frac{\pi}{\sin(-\pi a)}\frac{1}{\Gamma(-a)}\frac{\Gamma(-2a)(-2a)}{\Gamma(1+(-a))(1-a)}\\
=\frac{-2\pi a}{\sin(-\pi a)(1-a)}\frac{\Gamma(-2a)}{\Gamma(-a)^2(-a)}\\
&=\frac{2\pi}{(1-a)\sin(-\pi a)}\frac{1}{B(-a,-a)}.
\end{align*}
Our problem then reduces to showing that for $0<x<1$, $1<\frac{2\pi}{\sin(\pi x)}\frac{1}{B(x,x)}.$
Since $B(x,x)>0$, it suffices to show that for $0<x<1$, $B(x,x)<\frac{2\pi}{\sin(\pi x)}.$
We will show
\begin{equation}\label{main}
B(x,x)<\frac{2}{x}.
\end{equation}
Equation \eqref{main} is equivalent to $\frac{\Gamma(x)^2}{\Gamma(2x)}<\frac{2}{x}$, that is, $x\Gamma(x)^2<2\Gamma(2x)$. Multiplying by $x$, our problem is equivalent to showing that $x^2\Gamma(x)^2<2x\Gamma(2x)$.
Using the identity $\Gamma(z)z=\Gamma(z+1)$, this in turn reduces to $\Gamma(x+1)^2<\Gamma(2x+1)$. Taking $\log$ on both sides, this is equivalent to $2\log\Gamma(x+1)<\log\Gamma(2x+1)$. Notice that $\Gamma(1)=1$, hence \eqref{main} is equivalent to
\[
2\log\Gamma(x+1)<\log\Gamma(2x+1)+\log\Gamma(1).
\]
Since $\Gamma$ is log-convex and $x+1=\frac{2x+1}{2}+\frac{1}{2}$, we conclude that \eqref{main} holds.

  \end{proof}

\bibliographystyle{amsplain}
\bibliography{refunstable}

\end{document}